\title[Stable cylinders and fine structures]{Stable cylinders and fine structures for hyperbolic groups and curve graphs}
\author{Harry Petyt}
\address{Mathematical Institute, University of Oxford, UK}
\email{petyt@maths.ox.ac.uk}
\author{Davide Spriano}
 \address{Mathematical Institute, University of Oxford, UK}
\email{spriano@maths.ox.ac.uk}
\author{Abdul Zalloum}
\address{Institute for Advanced Study in Mathematics, Harbin Institute of Technology, China}
\email{abdul.zalloum@utoronto.ca}
\setlist{nosep} 
\newtheorem{theorem}{Theorem}[section]
\newtheorem{proposition}[theorem]{Proposition}
\newtheorem{corollary}[theorem]{Corollary}
\newtheorem{lemma}[theorem]{Lemma}
\theoremstyle{definition}
\newtheorem{definition}[theorem]{Definition}
\newtheorem{remark}[theorem]{Remark}
\newcounter{shcount}
\newcommand*{\bsh}[1]{\theoremstyle{definition}\newtheorem{subhead\theshcount}[theorem]{#1}
    \begin{subhead\theshcount}} 
\newcommand*{\esh}{\end{subhead\theshcount}\stepcounter{shcount}} 
\newcommand*{\ubsh}[1]{\theoremstyle{definition}\newtheorem*{subhead\theshcount}{#1}
    \begin{subhead\theshcount}} 
\newcommand*{\uesh}{\end{subhead\theshcount}\stepcounter{shcount}} 
\newcounter{claimcount}
\newenvironment{claim*}[1]{\par\vspace{2mm}\noindent
    \underline{Claim:}\hspace{2mm}#1}{}
\newcounter{enumlabelcount}
\newcommand\enumlabel[1][]{\item[#1]
    \refstepcounter{enumlabelcount}\def\@currentlabel{#1}}\makeatother
\renewcommand*{\backrefalt}[4]{\ifcase #1 (Not cited).\or (Cited p.~#2).\else (Cited pp.~#2).\fi} 
\def\subsection{\@startsection{subsection}{1}\z@{.7\linespacing\@plus\linespacing}
    {.5\linespacing}{\normalfont\scshape\centering}}\makeatother 
\def\part{\@startsection{part}{1}\z@{.7\linespacing\@plus\linespacing}
    {.5\linespacing}{\normalfont\large\scshape\centering}}\makeatother 
\DeclareMathOperator{\dist}{\mathsf{d}}
\DeclareMathOperator{\diam}{diam}
\DeclareMathOperator{\hull}{Hull}
\DeclareMathOperator{\isom}{Isom}
\DeclareMathOperator{\MCG}{MCG}
\newcommand*{\D}{\mathcal{D}}
\newcommand*{\E}{\mathcal{E}}
\newcommand*{\C}{\mathcal{C}}
\newcommand*{\g}{\mathfrak{g}}
\renewcommand*{\H}{\mathcal{H}}
\newcommand*{\N}{\mathcal N}
\newcommand*{\p}{\mathfrak{p}}
\newcommand*{\s}{\mathfrak S}
\newcommand{\eps}{\varepsilon}
\newcommand*{\cal}{\mathcal}
\newcommand*{\sgen}[1]{\langle#1\rangle}
\newcommand*{\ssm}{\smallsetminus}
\definecolor{harrycomment}{rgb}{0.6,0,0.4}
\definecolor{harrycolor}{rgb}{0.5, 0.0, 0.5} 
\definecolor{abdulcolor}{rgb}{1.0, 0.0, 0.0} 
\definecolor{davidecolor}{rgb}{0.0, 0.0, 1.0}  
\begin{document}

\begin{abstract}
In 1995, Rips and Sela asked if torsionfree hyperbolic groups admit globally stable cylinders. We establish this property for all residually finite hyperbolic groups and curve graphs of finite-type surfaces. These cylinders are fine objects, and the core of our approach is to upgrade the hyperbolic space to one with improved fine properties via a generalisation of Sageev's construction. The methods also let us prove that curve graphs of surfaces admit equivariant quasiisometric embeddings in finite products of quasitrees.
\end{abstract}

\maketitle

{\hypersetup{hidelinks}\setcounter{tocdepth}{1}\tableofcontents\setcounter{tocdepth}{2}}

\section{Introduction}


In their important work on equations over torsionfree hyperbolic groups, Rips--Sela introduced the notion of \emph{stable cylinders} in hyperbolic groups \cite{ripssela:canonical}. A cylinder in a hyperbolic group $G$ is simply a subset of a neighbourhood of a geodesic in $G$. Roughly speaking, a collection of cylinders is stable for a subset $A\subset G$ if for every triple in $A$, the cylinders between each pair are equal until they diverge in a tripodal fashion, except for in some controlled number of small balls (see Figure~\ref{fig:cylinders}).

Given a cylinder from 1 to $g\in G$, Rips--Sela construct what they call a ``canonical representative'' for $g$, which is a quasigeodesic from 1 to $g$. (It should be noted that the term is a misnomer, as the representatives are in multiple ways not canonical, although they are ``coarsely canonical'': see \cite[Def.~3.8, 3.9]{ripssela:canonical}.) Given cylinders that are stable for $A$, the triangles formed by representatives between elements of $A$ are tripods, except for in some controlled number of small balls \cite[Thm~3.11]{ripssela:canonical}.

Let $G$ be a torsionfree hyperbolic group. The main technical result proved in \cite[Thm~4.2]{ripssela:canonical} states that for each finite subset $A\subset G$, there exists a collection of cylinders that is stable for $A$. The result is only partially constructive: they produce a finite set of collections of cylinders and use a counting argument to show that at least one of these collections is stable. The size of the set depends on $|A|$. The representatives coming from these cylinders then get used to replace a given system of equations over $G$ by a finite set of systems of equations in a free group. The solubility of such systems is decidable by Makanin or Razborov \cite{makanin:equations,makanin:decidability,razborov:systems}, and it follows that the solubility of the original system is decidable. (This result has since been generalised to allow torsion \cite{dahmaniguirardel:foliations}.)

The fact that one only has a \emph{set} of collections of cylinders, one of which is stable, the fact that the construction depends on the cardinality of $A$, and the fact that the cylinders are only stable for the particular finite set $A$, all lead to complications in transforming the original system of equations to a system over a free group.

A collection of cylinders on $G$ is \emph{globally stable} if it is stable for the entire group $G$ (see Definition~\ref{def:gsc}). Admitting globally stable cylinders is much stronger than the conclusion of \cite[Thm~4.2]{ripssela:canonical}, and simplifies the algorithm for solving equations. Rips--Sela explicitly ask whether every torsionfree hyperbolic group admits globally stable cylinders \cite[p.508]{ripssela:canonical}, and prove that $C'(\frac18)$ small-cancellation groups do. Much later, Lazarovich--Sageev proved the same for all cubulated hyperbolic groups \cite{lazarovichsageev:globally}, which generalises the result of Rips--Sela because small-cancellation groups are cubulated \cite{wise:cubulating}.

\begin{theorem} \label{mthm:rf_hyp_gsc}
Every residually finite hyperbolic group admits globally stable cylinders.
\end{theorem}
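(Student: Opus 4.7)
The plan is to mimic and generalise the argument of Lazarovich--Sageev, which established Theorem~\ref{mthm:rf_hyp_gsc} under the extra hypothesis that $G$ acts geometrically on a CAT(0) cube complex. In their setting the combinatorial hyperplanes of the cube complex serve as cylinders: between any two elements one takes the union of hyperplanes separating them, and the median structure forces tripods. The obstruction to carrying this through for a general residually finite hyperbolic group is that such a group need not be cubulated. Following the blueprint set out in the abstract, the strategy is to upgrade the Cayley graph of $G$ to an auxiliary equivariantly quasi-isometric hyperbolic space $X$ that carries a wall structure of its own, and then to produce the cylinders inside $X$.

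The first step is to manufacture walls using residual finiteness. For each radius $R$, choose a finite-index normal subgroup $N_R \leq G$ whose intersection with the $R$-ball around the identity is trivial; left cosets of $N_R$ then give separations of $G$ that are respected by the action and are non-trivial on the scale of $R$. Grouping cosets into pairs of complementary half-spaces and discarding walls that are too thick or too unstable at the relevant scale yields an equivariant wall-space. A generalisation of Sageev's dual construction, applied to this wall-space, then produces the space $X$. The central technical claim will be that $X$ is hyperbolic and equivariantly quasi-isometric to $G$, while also satisfying a \emph{fineness} condition in the sense of Bowditch: every bounded set meets only finitely many walls, with controlled combinatorial intersections between hyperplane carriers.

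Given $X$, define a cylinder from $g$ to $h$ as a bounded neighbourhood of the union of hyperplanes of $X$ separating them, viewed back inside $G$. Global stability then drops out of the wall structure: for any triple $g,h,k$, the set of walls separating any pair decomposes canonically into those that agree with the combinatorial tripod (walls on exactly one side) plus a controlled exceptional set of walls interacting with all three points. Fineness confines the exceptional part to a bounded number of balls of bounded size, exactly as required by Definition~\ref{def:gsc}.

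The main obstacle, as I see it, will be producing the wall-space itself. Residual finiteness supplies abundant finite quotients, but a priori no control on the coarse geometry of the associated cosets, their quasiconvexity, or how the resulting walls interact. The selection procedure must choose, at each scale $R$, a finite-index subgroup whose cosets produce walls of uniformly controlled width and whose system of walls is compatible across scales, so that the dual object is hyperbolic (rather than merely CAT(0)) and quasi-isometric to $G$. Carrying this through will require a careful interplay between the hyperbolicity of $G$, an explicit choice of the subgroups $N_R$, and a finitary refinement of Sageev's machinery tuned to the fine properties needed to close the argument.
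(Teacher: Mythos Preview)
Your proposal diverges from the paper's approach at the very first step, and the divergence is fatal rather than merely stylistic. You propose to manufacture walls directly from cosets of finite-index normal subgroups $N_R$ chosen via residual finiteness. But cosets of finite-index subgroups of a hyperbolic group are not quasiconvex, and there is no mechanism that would make the resulting bipartitions behave like the halfspaces you need: they will not be coarsely convex, they will not produce a hyperbolic dual, and there is no reason the resulting wall-space would have any finite-dimensionality or fineness properties. You flag this yourself as ``the main obstacle,'' but it is not an obstacle to be overcome by a careful choice of $N_R$; it is the wrong source of walls altogether.

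The paper uses residual finiteness in a completely different way. It invokes the theorem of Bestvina--Bromberg--Fujiwara that a residually finite hyperbolic group $G$ admits an equivariant quasiisometric embedding into a finite product $\prod_{i=1}^m T_i$ of quasitrees, and then observes (Proposition~\ref{prop:hyp_orbit_qm}) that this embedding is quasimedian. The walls are then pulled back from the quasitrees: in each $T_i$ one takes complementary components of balls, which give genuine bipartitions with good separation properties because $T_i$ is a quasitree. The crucial feature this buys is a \emph{colouring} of the walls by $i\in\{1,\dots,m\}$, and the stability argument (Proposition~\ref{prop:few_projections}) hinges on the fact that monochromatic chains are $0$--separated. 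This colouring is the surrogate for the finite-dimensionality in Lazarovich--Sageev's CAT(0) cube complex argument; your coset construction provides nothing analogous. In short, the role of residual finiteness is not to supply walls but to supply, via BBF, an embedding into a product of quasitrees whose factor structure then does the combinatorial work.
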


As a consequence of this result, one obtains good representatives of group elements; a \emph{bicombing}, in other words. (In the presence of torsion, one has to use the methods of \cite{dahmani:existential}.) This bicombing is potentially very useful. Indeed, the main ingredient in Mineyev's work \cite{mineyev:straightening} on bounded cohomology is the construction of a weaker structure on hyperbolic groups: a \emph{rational} bicombing satisfying a generalised stability condition. This rational bicombing has seen many applications, including \cite{mineyevmonodshalom:ideal,mineyevyu:baumconnes,yu:hyperbolic,grovesmanning:dehn}. Mineyev reiterates the question of whether hyperbolic groups have a globally stable bicombing \cite[p.808]{mineyev:straightening}.

For instance, in \cite{lazarovich:volume}, Lazarovich proved that hyperbolic groups with globally stable cylinders satisfy \emph{finite-index rigidity}. At the time this was known only for $C'(\frac18)$ groups and cubulated hyperbolic groups. Later, he used Mineyev's rational bicombing to generalise the result to all hyperbolic groups, but the argument is considerably more involved \cite{lazarovich:finite}.

The notion of stable cylinders actually makes sense in any hyperbolic space, and with respect to any isometric action, not just in the Cayley graph of a hyperbolic group with its free, transitive action. Theorem~\ref{mthm:rf_hyp_gsc} is in fact a consequence of a more general result about hyperbolic spaces with certain ``nice'' (i.e. quasimedian, see Definition~\ref{def:sqtr}) equivariant quasiisometric embeddings in finite products of quasitrees; see Theorem~\ref{thm:cylinders}. The case of residually finite hyperbolic groups is then a consequence of \cite{bestvinabrombergfujiwara:proper} and \cite{petyt:mapping}; see Section~\ref{sec:rf_hyp}.

The improved generality of Theorem~\ref{thm:cylinders} enables us to consider hyperbolic spaces other than groups. This includes the curve graph $\C\Sigma$ of a surface $\Sigma$, together with the action of its mapping class group $\MCG\Sigma$.

\begin{theorem} \label{mthm:cs_gsc}
For any finite-type surface $\Sigma$, the pair $(\C\Sigma,\MCG\Sigma)$ admits globally stable cylinders.
\end{theorem}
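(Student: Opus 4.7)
The plan is to reduce Theorem~\ref{mthm:cs_gsc} to Theorem~\ref{thm:cylinders} applied to $(\C\Sigma,\MCG\Sigma)$. Since $\C\Sigma$ is hyperbolic and $\MCG\Sigma$ acts on it by isometries, what needs to be verified is that there exists an $\MCG\Sigma$-equivariant quasiisometric embedding of $\C\Sigma$ into a finite product of quasitrees which is quasimedian in the sense of Definition~\ref{def:sqtr}. The existence of such an embedding, without the quasimedian requirement, is the other main result announced in the abstract, so the substantive question is to arrange that the embedding respects the relevant median structures.

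First I would construct the embedding along Bestvina--Bromberg--Fujiwara lines. Partition the isotopy classes of essential subsurfaces of $\Sigma$ into finitely many $\MCG\Sigma$-invariant ``colours'' so that within each colour the collection of curve graphs $\{\C W\}$ satisfies the projection-complex axioms, via Behrstock's inequality and the bounded geodesic image theorem. Each colour then yields a quasitree of metric spaces, and the product over colours gives a finite product of quasitrees $X$ carrying an $\MCG\Sigma$-action. The map $\C\Sigma\to X$ sending a curve $\alpha$ to the tuple of subsurface projections $(\pi_W(\alpha))_W$ is equivariant, and the quasiisometric embedding property follows from a distance formula for $\C\Sigma$ in the spirit of Masur--Minsky (simpler than the marking graph version, since only subsurfaces where the two projections are far apart contribute).

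The main obstacle, and the heart of the matter, is verifying the quasimedian property. For this I would use the coarse median on $\C\Sigma$ coming from the hierarchical hyperbolicity of $\MCG\Sigma$: the median of a triple of curves is, up to bounded error, the unique curve whose projection to each subsurface $W$ realises the median of the three projections in $\C W$. Each quasitree factor should itself be quasimedian, with its median compatible with the projection-complex structure. Checking that these two facts combine to show the embedding sends the coarse median on $\C\Sigma$ to the coordinatewise coarse median on the product, up to uniform bounded error, is the delicate point; it reduces to confirming that the median of three projections $\pi_W(\alpha_i)$ computed inside the quasitree of the colour of $W$ agrees, up to uniformly bounded error, with $\pi_W$ of the coarse median $m(\alpha_1,\alpha_2,\alpha_3)$. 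Once this is in place, Theorem~\ref{thm:cylinders} immediately yields globally stable cylinders for $(\C\Sigma,\MCG\Sigma)$.
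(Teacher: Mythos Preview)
Your overall architecture is right and matches the paper: reduce to showing that $(\C\Sigma,\MCG\Sigma)$ has strong~(QT), then invoke Theorem~\ref{thm:cylinders}. The gap is in how you propose to produce the embedding.

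The direct Bestvina--Bromberg--Fujiwara construction you sketch, sending a curve $\alpha$ to its tuple of subsurface projections $(\pi_W(\alpha))_W$, does \emph{not} give a quasiisometric embedding of $\C\Sigma$ in a finite product of quasitrees. This is exactly the obstruction flagged in the introduction: BBF themselves observe that their method only captures the \emph{thick part} of the curve graph and leave the full embedding as an open question. Concretely, the map is not even coarsely Lipschitz: if $\alpha$ and $\beta$ are disjoint curves and $W$ is a subsurface disjoint from $\alpha$ but cut by $\beta$, then a large partial pseudo-Anosov supported on $W$ keeps $\beta$ adjacent to $\alpha$ in $\C\Sigma$ while moving $\pi_W(\beta)$ arbitrarily far. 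Relatedly, there is no ``distance formula for $\C\Sigma$'' of the kind you invoke; $\C\Sigma$ is the top of the hierarchy, not something assembled from projections to proper subsurfaces.

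The paper avoids this by going through $\MCG\Sigma$ rather than $\C\Sigma$ directly. One starts from the known quasimedian quasiisometric embedding $f\colon\MCG\Sigma\to\prod T_i$ (Theorem~\ref{thm:mcg_rank}), pulls the walls back to $\MCG\Sigma$, and then passes to a \emph{sub}-dualisable system $\E^K\subset\D$ of $K$--separated chains. The key technical step (Proposition~\ref{prop:curve_graph}) shows that $(S,\dist_{\E^K})$ is quasiisometric to $\C\Sigma$: long $\E^K$--chains force distance in $\C\Sigma$, while curve stabilisers $\mathbf P_\alpha$ are $\dist_{\E^K}$--bounded because any long chain separating two points of a product region sits in a large grid and hence fails $K$--separation. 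The factors $T_{\E_i}$ are then shown to be quasitrees via Proposition~\ref{prop:0-sep_quasitree}, and the map into their product is quasimedian essentially for free. In short, the quasitrees for $\C\Sigma$ are not BBF quasitrees but rather new duals obtained by coarsening the wall structure coming from the $\MCG\Sigma$ embedding; this coarsening is what collapses the product regions and is the genuinely new idea you are missing.
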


Theorem~\ref{mthm:cs_gsc} does not immediately follow from Theorem~\ref{thm:cylinders} and known results in the literature. Indeed, although it is known that mapping class groups admit nice equivariant embeddings in finite products of quasitrees \cite{bestvinabrombergfujiwara:proper,petyt:mapping}, it is pointed out on \cite[p.695]{bestvinabrombergfujiwara:proper} that their work does \emph{not} give a quasiisometric embedding of the curve graph in a finite product of quasitrees, only the \emph{thick part} of the curve graph. They leave the existence of equivariant quasiisometric embeddings of curve graphs in finite products of quasitrees an open question.

If one does not require equivariance, then it was noted in \cite{hume:embedding} that curve graphs quasiisometrically embed in finite products of trees. Indeed, they have finite \emph{asymptotic dimension} \cite{bellfujiwara:asymptotic,bestvinabromberg:onasymptotic}, so their Gromov boundaries have finite \emph{capacity dimension} \cite{mackaysisto:embedding}, and hence one can apply work of Buyalo on \emph{hyperbolic cones} to obtain such an embedding \cite{buyalo:capacity}. However, the nature of the hyperbolic cone construction precludes making these embeddings equivariant.

Our third main result resolves this question for curve graphs, by feeding the construction of \cite{bestvinabrombergfujiwara:proper} into the machinery of \emph{dualisable systems} from \cite{petytzalloum:constructing}.

\begin{theorem} \label{mthm:cs_fsqr}
For any finite-type surface $\Sigma$, the curve graph $\C\Sigma$ admits a $\MCG\Sigma$--equivariant, quasimedian, quasiisometric embedding in a finite product of quasitrees.
\end{theorem}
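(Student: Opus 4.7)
The plan is to apply the dualisable systems machinery of \cite{petytzalloum:constructing} to the projection data that Bestvina--Bromberg--Fujiwara use for mapping class groups. That framework is tailored precisely to turn a collection of projections satisfying appropriate axioms into an equivariant quasiisometric embedding into a finite product of quasitrees, while simultaneously producing a compatible quasimedian structure. In particular, Theorem~\ref{mthm:cs_fsqr} is exactly the kind of input required by Theorem~\ref{thm:cylinders}, so it will feed directly into the proof of Theorem~\ref{mthm:cs_gsc} while answering the question left open on \cite[p.695]{bestvinabrombergfujiwara:proper}.

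First, I would set up the input data. The group $\MCG\Sigma$ acts on the collection of isotopy classes of essential simple closed curves on $\Sigma$, and for each such curve $\alpha$ there is an annular curve graph together with a coarse subsurface projection from $\C\Sigma$ minus a neighbourhood of $\alpha$ to this annular graph. This system satisfies Behrstock's inequality and the BBF projection axioms, and the work of \cite{bestvinabrombergfujiwara:proper,petyt:mapping} produces an $\MCG\Sigma$--equivariant finite colouring of the curves such that for each colour class the corresponding BBF projection complex is a quasitree.

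Second, I would verify that this projection data assembles into a dualisable system in the sense of \cite{petytzalloum:constructing}. This entails checking the axioms required as input to the dualization, establishing equivariance of the construction throughout, and identifying the dual space with $\C\Sigma$ up to equivariant quasiisometry. The dualization then outputs an equivariant quasiisometric embedding into a finite product of quasitrees, one factor per colour class, and the quasimedian property comes for free from the construction since the dual carries a natural median that matches the median on each quasitree factor up to bounded error.

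The main obstacle is that the naive BBF product map distorts the thin part of $\C\Sigma$: when two short curves intersect, the subsurface projection coordinates may fail to reflect the true $\C\Sigma$--distance, which is precisely why \cite{bestvinabrombergfujiwara:proper} only obtain a quasiisometric embedding of the thick part. The role of the dualisable systems machinery is to enrich the projection data with enough additional wall-like structure to recover the full curve graph metric, and the delicate verification will be that the enriched system still satisfies the projection axioms so that each coordinate factor remains a quasitree. This step is where the specific geometry of curves on a surface, as opposed to the general BBF axiomatic setup, must be exploited.
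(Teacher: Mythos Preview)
Your plan does not actually contain the idea that makes the argument work. You correctly identify the obstacle---annular projections only see the thick part of $\C\Sigma$---but your proposed remedy, to ``enrich the projection data with enough additional wall-like structure,'' is not a mechanism; it is a restatement of the goal. You never say what the set with walls is, what the dualisable system on it is, or why the dual should recover $\C\Sigma$ rather than the thick part. Working directly with annular projections on $\C\Sigma$ runs into the very issue you name, and nothing in your outline addresses it.

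The paper's route is quite different and worth contrasting. It does \emph{not} build walls directly on $\C\Sigma$ from subsurface projections. Instead, it starts from the known equivariant quasimedian embedding of $\MCG\Sigma$ itself into a finite product of quasitrees (Theorem~\ref{thm:mcg_rank}), pulls back the quasitree walls to the orbit $S$, and then \emph{restricts} to the sub-system $\E^K$ of $K$--separated chains. The point is that passing to $K$--chains collapses exactly the product regions $\mathbf P_\alpha$ coming from curve stabilisers: any long chain inside such a region is forced to participate in a large grid, so cannot be a $K$--chain (Proposition~\ref{prop:curve_graph}). Since $\C\Sigma$ is obtained from $\MCG\Sigma$ by coning off these stabilisers, the $\E^K$--metric on $S$ is quasiisometric to $\C\Sigma$. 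One then checks that the restricted systems $\E_i$ on each factor remain $0$--separated and $1$--gluable, so their duals $T_{\E_i}$ are still quasitrees (Lemmas~\ref{lem:Ei_quasitree}, \ref{lem:Ei_dense}), and the map into $\prod T_{\E_i}$ is the desired embedding. The move is a \emph{coarsening} of the wall data on $\MCG\Sigma$, not an enrichment of projection data on $\C\Sigma$; that is the missing idea in your proposal.
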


In fact, our arguments again work in more generality than the theorem suggests. The full statement is given as Theorem~\ref{thm:hhg}. The following is a sample application of this more general result, relying on recent work of Tao \cite{tao:property}; see Corollary~\ref{cor:artin}.

\begin{theorem} \label{mthm:artin}
Let $A$ be a residually finite Artin group of large and hyperbolic type. The coned-off Deligne complex $\hat D$, defined in \cite{martinprzytycki:acylindrical}, admits an $A$--equivariant, quasimedian, quasiisometric embedding in a finite product of quasitrees. Moreover, $(A,\hat D)$ admits globally stable cylinders.
\end{theorem}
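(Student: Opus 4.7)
The plan is to deduce Theorem~\ref{mthm:artin} from the two general results already advertised: Theorem~\ref{thm:hhg} supplies the equivariant quasimedian quasiisometric embedding into a finite product of quasitrees, and Theorem~\ref{thm:cylinders} then produces globally stable cylinders. The essential task is therefore to verify that $(A,\hat D)$ fits the hypotheses of Theorem~\ref{thm:hhg}, using Tao's property \cite{tao:property} as the main new input.

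Martin--Przytycki \cite{martinprzytycki:acylindrical} proved that when $A$ is of large and hyperbolic type, the coned-off Deligne complex $\hat D$ is Gromov hyperbolic and carries a nicely behaved action of $A$. What Tao adds is precisely the structural data on $\hat D$ that is analogous to the subsurface-projection / hierarchy structure used in the curve graph case: an $A$--equivariant family of subspaces with compatible coarse projections satisfying the axioms required to drive the machinery behind Theorem~\ref{thm:hhg} (i.e.\ the dualisable-systems construction of \cite{petytzalloum:constructing}). Once this verification is in place, Theorem~\ref{thm:hhg} immediately yields an $A$--equivariant, quasimedian, quasiisometric embedding of $\hat D$ into a finite product of quasitrees. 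Residual finiteness of $A$ enters at the same point as in the curve graph case, namely to upgrade the Bestvina--Bromberg--Fujiwara projection complexes to genuine quasitrees as in \cite{bestvinabrombergfujiwara:proper}.

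For the second assertion, we now have a hyperbolic space $\hat D$ equipped with a quasimedian equivariant quasiisometric embedding into a finite product of quasitrees, i.e.\ exactly the setting of Theorem~\ref{thm:cylinders}. Applying that theorem to $(A,\hat D)$ produces globally stable cylinders, exactly as Theorem~\ref{mthm:rf_hyp_gsc} is deduced from Theorem~\ref{thm:cylinders} in the case of residually finite hyperbolic groups.

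The principal obstacle is the first step: translating Tao's property into the precise axiomatic framework demanded by Theorem~\ref{thm:hhg}. Unlike the hyperbolic cone approach of \cite{buyalo:capacity}, which intrinsically destroys equivariance, one must here produce a genuinely $A$--equivariant system with enough orthogonality and nesting structure for the quasimedian conclusion, not merely a quasiisometric embedding in a product of trees. Establishing this compatibility --- and checking that Tao's hypotheses propagate to the coned-off Deligne complex rather than only to the underlying Artin group --- is where the technical work will concentrate; the remaining steps are then formal applications of the general theorems already proved in the paper.
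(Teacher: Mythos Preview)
Your overall plan---feed $(A,\hat D)$ into Theorem~\ref{thm:hhg}, then invoke Theorem~\ref{thm:cylinders}---matches the paper's, but you have misidentified where the work lies and what the inputs are.

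Tao's result \cite{tao:property} is that the \emph{group} $A$ has strong (QT); it says nothing directly about $\hat D$. The missing ingredient in your outline is the citation to \cite{hagenmartinsisto:extra}, which shows that $A$ is a hierarchically hyperbolic group and identifies $\hat D$ as its \emph{largest} hyperbolic space in the sense of \cite{abbottbehrstockdurham:largest}. Those two facts are precisely the hypotheses of Theorem~\ref{thm:hhg}: an HHG $G$ with strong (QT), together with its largest hyperbolic space $X$. The theorem then outputs both strong (QT) for $\hat D$ and globally stable cylinders for $(A,\hat D)$ in one stroke.

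Consequently, the ``principal obstacle'' you describe---translating Tao's property into the framework of Theorem~\ref{thm:hhg} and propagating it from $A$ to $\hat D$---does not exist. That propagation \emph{is} Theorem~\ref{thm:hhg}; it is the abstract version of the curve-graph argument (Remark~\ref{rem:generality}). The entire proof in the paper is a single paragraph of citations, with no further technical work. Your sketch would become correct and complete simply by replacing the speculative middle paragraphs with: $A$ is an HHG with largest hyperbolic space $\hat D$ by \cite{hagenmartinsisto:extra}; $A$ has strong (QT) by \cite{tao:property}; now apply Theorem~\ref{thm:hhg}.
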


\subsection*{Key innovations and ideas behind the proof} \label{subsec:new_ideas}

Globally stable cylinders are delicate objects: cylinders are required to \emph{exactly} agree rather than coarsely. A priori, the coarse notion of a $\delta$--hyperbolic space appears too weak to accommodate such a fine-tuned property. Dealing with the coarseness is one of the main challenges. The construction of the cylinders of Theorem~\ref{thm:cylinders} is inspired by \cite{lazarovichsageev:globally}, where Lazarovich--Sageev use fine hyperplane combinatorics and finite-dimensionality to deal with hyperbolic CAT(0) cube complexes. 

To make up for the lack of fine structure, our proof uses two main innovative ingredients. The first is the \emph{generalised Sageev construction}. Developed in \cite{petytsprianozalloum:hyperbolic,petytzalloum:constructing}, this lets one improve certain properties of metric spaces by considering a ``dual'' with respect to some collection of walls. In particular, it can take any hyperbolic graph and produce a thickened space with various fine features. The original Sageev construction \cite{sageev:ends} has stronger requirements on walls and exactly yields a CAT(0) cube complex. Our procedure only yields a median algebra, but allows for more general sets of walls.  The very fact of not yielding a cube complex is a key strength of our approach, as there are residually finite hyperbolic groups with property~(T) that cannot be cubulated. We summarise some of the generalised Sageev construction for hyperbolic spaces in Theorem~\ref{thm:general_fine}. A \emph{wall} on a set $X$ is a bipartition into \emph{halfspaces} $X = h^+ \sqcup h^-$. A collection $\cal Z$ of subsets of $X$ has the (finitary) \emph{Helly property} if every (finite) set of pairwise intersecting members of $\cal Z$ has nonempty total intersection.

\begin{theorem}[{\cite[Thm~7.13]{petytzalloum:constructing}}]\label{thm:general_fine}
Let $S$ be a hyperbolic graph. There is a hyperbolic space with walls $(X,\dist_X,W)$ such that the following hold.
\begin{enumerate}
\item   \textbf{Geometry}: $(X,\dist_W)$ is $\isom(S)$--equivariantly quasiisometric to $S$.
\item   \textbf{Median}: There is a map $\mu:X^3\to X$ making $X$ a median algebra, and $\mu(x,y,z)$ is uniformly close to the coarse centre of a geodesic triangle.
\item   \textbf{Helly}: Metric balls in $X$ satisfy the Helly property. Halfspaces satisfy the finitary Helly property.
\item   \textbf{Gate}: \label{item4:general_fine} If $A\subset X$ is a halfspace or ball, then there exists a 1--Lipschitz retraction $\g_A: X \to A$. 
\item   \textbf{Convexity}: Each halfspace $h^+$ satisfies $\mu(a,b,c) \in h^+$ for all $a,b \in h^+$, $c \in X$.
\end{enumerate}
\end{theorem}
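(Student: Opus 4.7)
The plan is to generalise Sageev's dualisation procedure. Classical Sageev takes a wall space satisfying strict axioms and produces a CAT(0) cube complex; here the idea is to allow a weaker class of walls on the hyperbolic graph $S$ and dualise to a median algebra with the listed fine properties, at the cost of losing the cubical structure. This flexibility is essential because we ultimately want to apply the construction to hyperbolic groups that resist cubulation.

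The first step is to fix a rich, $\isom(S)$--invariant collection $W$ of bipartitions of $S$. Natural candidates are \emph{coarse bisectors}: for each pair $p,q\in S$ at distance beyond some threshold, take the bipartition of $S$ into points closer to $p$ versus points closer to $q$, with a small thickening to handle equidistant ambiguity. Hyperbolicity guarantees these bipartitions behave rigidly, meeting transversely or not at all up to bounded error. Next I would let $X$ be the space of consistent orientations $\sigma\colon W \to \{h^+,h^-\}$ that agree with some principal orientation $\sigma_x$, $x \in S$, off of finitely many walls, and define $\mu$ by pointwise majority. Item~(5) is then automatic, and item~(2) follows once one identifies $\mu(\sigma_x,\sigma_y,\sigma_z)$ with the orientation of a point near the coarse centre of a geodesic triangle in $S$, which reduces to checking that each wall separating two of $x,y,z$ places that centre on the majority side. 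A wall-counting metric $\dist_W$ then yields item~(1), provided the system $W$ contains enough coarse bisectors that distant points of $S$ are separated by many walls.

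The hardest part is item~(3), specifically the Helly property for metric balls. Balls are not generally Helly in median algebras, so one must arrange $\dist_X$ so that every ball is (coarsely) an intersection of halfspaces, and then inherit Helly from the finitary Helly property of halfspaces. My plan is to enforce local finiteness on $W$ (only finitely many walls separating any two points) together with a richness condition ensuring that each sphere is cut out by a family of pairwise-meeting halfspaces; a nerve-style argument then upgrades the finitary Helly property for halfspaces to a genuine Helly property for balls. With this in place, item~(4) follows from the standard median gate construction: define $\g_A(\sigma)$ to agree with $\sigma$ on walls not separating $A$ and with $A$ on walls that do; halfspace convexity and Helly for balls together show that this orientation really lies in $A$, and the map is 1--Lipschitz by counting disagreements. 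Hyperbolicity of $X$ is then inherited from the hyperbolicity of $S$ via the quasiisometry of item~(1).
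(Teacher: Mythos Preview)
This theorem is not proved in the present paper; it is quoted from \cite{petytzalloum:constructing}, and the relevant machinery is only summarised here in Section~\ref{subsec:duals}. So there is no ``paper's own proof'' to compare against directly, but the summary in Section~\ref{subsec:duals} makes the intended approach visible, and your sketch diverges from it in a way that matters.

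The key discrepancy is the metric. You propose a wall-counting metric and define $X$ as the set of orientations differing from a principal one on \emph{finitely many} walls. With a rich $\isom(S)$--invariant family of coarse bisectors this fails: for any two points $p,q$ at large distance there are infinitely many pairs $(p',q')$ whose bisectors separate $p$ from $q$, so the wall-count is infinite and your space collapses. Your proposed fix, ``enforce local finiteness on $W$'', is in direct tension with the richness you need for item~(1), and in any case would not be $\isom(S)$--invariant for a general hyperbolic graph. The construction in \cite{petytzalloum:constructing} avoids this by working not with raw wall-counts but with a \emph{dualisable system} $\D$ of chains (Definition~\ref{def:dualisable_system}): the metric $\dist_\D(x,y)$ is the supremum of $|c|$ over chains $c\in\D$ separating $x$ from $y$, and the dual space consists of ultrafilters at finite $\dist_\D$--distance from $S$. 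This allows infinitely many separating walls while keeping distances finite, and the gluable/separated conditions (Definition~\ref{def:gluable_separated}) then drive hyperbolicity via Proposition~\ref{prop:dual_hyp}.

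Your plan for the Helly property for balls is also where the real work hides. You hope to realise balls as intersections of halfspaces via a ``nerve-style argument'', but you give no mechanism for this. In the actual framework it is a genuine lemma (Lemma~\ref{lem:balls_gated}, i.e.\ \cite[Lem.~4.2]{petytzalloum:constructing}) that balls in $S_\D$ are gated \emph{when $\D$ is a system of chains}; the chain structure is exactly what makes this go through, and it is not available in your setup. Items~(2), (4), (5) are indeed soft once the right dual space is in hand, as you say, but the substance of the theorem is in choosing the dualisable system and proving Lemma~\ref{lem:balls_gated} and Proposition~\ref{prop:dual_hyp}, neither of which your sketch reaches.
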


The primary takeaway from Theorem~\ref{thm:general_fine} is that it enables an equivariant transformation of a hyperbolic space $S$ into one with numerous fine properties similar to those of a CAT(0) cube complex. However, this transformation requires a trade-off: the new hyperbolic space $X$ is generally locally infinite. One can think of the above procedure as follows: while many desired properties, such as the Helly property for balls, hold coarsely in $S$, they fail to hold exactly in $S$ due to a lack of sufficient space. This issue is addressed by adding a large number of extra points to $S$, resulting in a locally infinite hyperbolic space $X$ with finer features.

Note that Theorem~\ref{thm:general_fine} does not require residual finiteness. This brings us to the second main innovation in the proof, which uses the embedding theorem of Bestvina--Bromberg--Fujiwara \cite{bestvinabrombergfujiwara:proper}, stating that residually finite hyperbolic groups admit equivariant quasiisometric embeddings in finite products of quasitrees.

The argument of Lazarovich--Sageev in \cite{lazarovichsageev:globally} critically depends on the fact that there is an upper bound on how many hyperplanes can pairwise intersect, namely the dimension of the cube complex. This is not true for the spaces produced by Theorem~\ref{thm:general_fine}. The new idea is to use the Bestvina--Bromberg--Fujiwara embedding to produce a better wallspace structure with coloured walls that, whilst not having a bound on sets of pairwise intersecting walls, does satisfy a kind of surrogate finite-dimensionality that restricts the possible crossing patterns of walls of a given colour. The exact property that we need, namely \emph{strong (QT)}, is abstracted in Definition~\ref{def:sqtr}. This suggests a natural question.

\bsh{Question} \label{question:QT}
Do non--residually-finite hyperbolic groups have strong (QT)?
\esh

A positive answer to Question~\ref{question:QT} would imply that all hyperbolic groups admit globally stable cylinders. 

\medskip

Section~\ref{sec:preliminaries} contains background material and proofs of some preliminary results that will be needed. In Sections~\ref{sec:QT} and~\ref{sec:dualising_subspaces} we construct thickenings of hyperbolic spaces with nice embeddings in products of quasitrees. Section~\ref{sec:stable} contains the proof of Theorem~\ref{thm:cylinders}, which yields globally stable cylinders for such spaces. This is applied to residually finite hyperbolic groups in Section~\ref{sec:rf_hyp}, and to curve graphs, via Theorem~\ref{mthm:cs_fsqr}, in Section~\ref{sec:curve_graph}.

\medskip
\noindent\textbf{Acknowledgments.}
The authors are very grateful to Mladen Bestvina, Nima Hoda, Nir Lazarovich, Michah Sageev, Zlil Sela, and Alessandro Sisto for useful discussions and comments.

\section{Preliminaries} \label{sec:preliminaries}

A \emph{$k$--rough geodesic} in a metric space $X$ is a $(1,k)$--quasiisometrically embedded path in $X$. An \emph{unparametrised} rough geodesic is the image of such an embedding. A metric space $X$ is \emph{$k$--weakly roughly geodesic} if for every $x,y\in X$ and every $r\le\dist(x,y)$, there is some point $z\in X$ such that $\dist(x,z)\ge r-k$ and $\dist(z,y)\ge\dist(x,y)-r-k$, but also $\dist(x,z)+\dist(z,y)\le\dist(x,y)+k$.

\subsection{Coarse medians} \label{subsec:coarse_median}

If $X$ is a roughly geodesic hyperbolic space, then for every triple $x,y,z\in X$ there is a uniformly bounded set that is uniformly close to all sides of every (uniform-quality) rough-geodesic triangle between $x$, $y$, and $z$. A \emph{coarse median} on $X$ is an equivariant choice of point in this bounded set for each triple. This coarse median $\mu:X^3\to X$ satisfies the properties of a median algebra \cite{bandelthedlikova:median} up to a bounded error. Hence, if $Y=\prod_{i=1}^mY_i$ is a direct product of hyperbolic spaces, then the ternary operator $\mu$, given by taking the coarse median in each factor, also satisfies the properties of a median algebra up to finite error.

Bowditch introduced the notion of \emph{coarse median spaces} \cite{bowditch:coarse}, which provides a useful language for working with ternary operators like these. Since we only need a small amount of this language, and only in a few examples, we omit the general definition, as it is slightly technical. However, the natural maps of coarse median spaces will be important for us.

\begin{definition} \label{def:quasimedian}
A map $f:(X,\mu_X)\to(Y,\mu_Y)$ of coarse median spaces is \emph{quasimedian} if there is a constant $\delta$ such that
\[
\dist_Y\big(\mu_Y(fx,fy,fz),\,f(\mu_X(x,y,z))\big) \,\le\, \delta
\]
for all $x,y,z\in X$.
\end{definition}

\subsection{Globally stable cylinders} \label{subsec:cylinders}

The following definition is a minor modification of the definition of globally stable cylinders from \cite{ripssela:canonical} that allows for roughly geodesic spaces. For three points $x,y,z$ in a hyperbolic space, we write $\sgen{y,z}_x$ for their Gromov product, which agrees with the distance from $x$ to the coarse median $\mu(x,y,z)$ up to a uniform additive error.

\begin{definition} \label{def:gsc}
Let $X$ be a $\delta$--roughly geodesic, hyperbolic metric space. For a constant $\theta\ge0$ and points $x,y\in X$, a \emph{$\theta$--cylinder} is a subset $C(x,y)\subset X$ with the property that $\gamma_{xy} \subset C(x,y)\subset\N_\theta(\gamma_{xy})$ for every $\delta$--rough geodesic $\gamma_{xy}$ from $x$ to $y$. 

A choice of $\theta$--cylinder $C(x,y)$ for each $x,y\in X$ is \emph{globally $(k,R)$--stable} if the following two conditions hold.
\begin{enumerate}
\item   \emph{Reversibility:} $C(x,y) = C(y,x)$ for all $x,y\in X$.
\item   \emph{$(k,R)$--stability:} for each $x,y,z\in X$, there exist $R$--balls $B_1,\dots,B_k$ in $X$ such that 
\[
C(x,y)\cap B(x,\sgen{y,z}_x) \ssm \bigcup_{i=1}^k B_i \;=\; C(x,z)\cap B(x,\sgen{y,z}_x) \ssm \bigcup_{i=1}^k B_i.
\]
\end{enumerate}
We say that $X$ \emph{admits globally stable cylinders} if there exists a choice of globally $(k,R)$--stable $\theta$--cylinders for some $k,R,\theta$.

Suppose that a group $G$ acts on $X$. A choice of $\theta$--cylinder $C(x,y)$ for each $x,y\in X$ is \emph{$G$--invariant} if $C(gx,gy)=gC(x,y)$ for all $x,y\in X$ and all $g\in G$. We say that the pair $(G,X)$ admits globally stable cylinders if there exists a choice of $G$--invariant, globally $(k,R)$--stable $\theta$--cylinders for some $k,R,\theta$.

A hyperbolic group $G$ is said to admit globally stable cylinders if it has a Cayley graph $X$ such that $(G,X)$ admits globally stable cylinders.

\begin{figure}[ht]
\centering
\includegraphics[height=5cm, trim = 0mm 4mm 0mm 4mm]{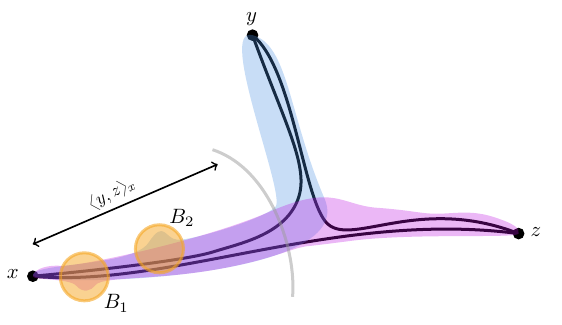}
\caption{The stability condition for cylinders.} \label{fig:cylinders}
\end{figure}
\end{definition}

It turns out that the existence of globally stable cylinders can be passed along equivariant quasiisometries. This is made precise in Proposition~\ref{prop:cylinders_qi_invariant}. For hyperbolic graphs $X$ and $Y$ on which a group $G$ acts properly and cocompactly, it was shown in \cite[Prop.~9]{lazarovichsageev:globally} that $(G,X)$ admits globally stable cylinders if and only if $(G,Y)$ does. Here, though, we are considering metric spaces that are not locally finite graphs: they are locally infinite roughly geodesic spaces. We also allow for arbitrary isometric actions.

We shall use the following two simple statements in the proof of Proposition~\ref{prop:cylinders_qi_invariant}.

\begin{lemma} \label{lem:delta'}
For every $\delta$ there exists $\delta'=\delta'(\delta)$ such that the following holds. Let $X$ be a $\delta$--hyperbolic space and let $x,y,z\in X$. If $\gamma_1$ is a $\delta$--rough geodesic from $x$ to $\mu(x,y,z)$ and $\gamma_2$ is a $\delta$--rough geodesic from $\mu(x,y,z)$ to $y$, then $\gamma_1\cup\gamma_2$ is a $\delta'$--rough geodesic. 
\end{lemma}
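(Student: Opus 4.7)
The plan is to parametrise the concatenation $\gamma := \gamma_1 \cup \gamma_2$ on the interval $[0, L_1 + L_2]$, where $L_i$ denotes the length of the domain of $\gamma_i$, and verify that $\gamma$ is a $(1,\delta')$--quasiisometric embedding. Writing $m := \mu(x,y,z) = \gamma(L_1)$, when both chosen parameters $s,t$ lie on the same piece, the desired bound is immediate from the rough-geodesic hypothesis on $\gamma_1$ or $\gamma_2$ alone. So the substance of the argument is the mixed case $s \le L_1 \le t$, where $p := \gamma(s)$ and $q := \gamma(t)$ sit on opposite sides of $m$.

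The upper bound $\dist(p,q) \le (t-s) + 2\delta$ is immediate: route through $m$ and apply the triangle inequality together with the two rough-geodesic estimates. So the whole question is the lower bound.

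For the lower bound, the key ingredient is the coarse median property recalled in Section~\ref{subsec:coarse_median}: $m$ lies within a uniform distance $D = D(\delta)$ of every $\delta$--rough geodesic from $x$ to $y$. From this one extracts the intermediate claim
\[
\dist(x,m) + \dist(m,y) \,\le\, \dist(x,y) + C
\]
for some $C = C(\delta)$, by concatenating the portion of such a rough geodesic before the nearest point to $m$ with a short path to $m$, and similarly on the other side. Combined with the rough-geodesic lower bounds $\dist(x,m) \ge L_1 - \delta$ and $\dist(m,y) \ge L_2 - \delta$, this gives $\dist(x,y) \ge L_1 + L_2 - 2\delta - C$.

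The reverse triangle inequality then finishes the proof: using $\dist(x,p) \le s + \delta$ and $\dist(q,y) \le L_1 + L_2 - t + \delta$, one gets
\[
\dist(p,q) \,\ge\, \dist(x,y) - \dist(x,p) - \dist(q,y) \,\ge\, (t-s) - 4\delta - C,
\]
and setting $\delta' := 4\delta + C$ yields the lemma. The only real obstacle is making the ``$m$ lies near a geodesic from $x$ to $y$'' fact precise as the additive bound $\dist(x,m) + \dist(m,y) \le \dist(x,y) + C$; this is standard hyperbolic geometry, but should be stated as a short intermediate step rather than invoked folklorically.
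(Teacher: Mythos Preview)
Your proof is correct. The paper does not actually supply a proof of this lemma: it is stated as one of two ``simple statements'' used in the proof of Proposition~\ref{prop:cylinders_qi_invariant}, and left to the reader. Your argument is exactly the standard one for this kind of concatenation lemma in hyperbolic geometry, and your remark that the step $\dist(x,m)+\dist(m,y)\le\dist(x,y)+C$ should be made explicit is well taken; it follows immediately from the defining property of the coarse median recalled in Section~\ref{subsec:coarse_median}.
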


\begin{lemma}{{\cite[Lem.~2.9]{petyt:mapping}}} \label{lem:hyp_qi_is_qm}
For each $\delta,\lambda$ there exists $\lambda'$ such that every $\lambda$--quasiisometric embedding of $\delta$--hyperbolic spaces is $\lambda'$--quasimedian.
\end{lemma}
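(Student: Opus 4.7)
The plan is to exploit the standard characterisation of the coarse median on a hyperbolic space: for a triple $(x,y,z)$, the coarse median $\mu(x,y,z)$ is any point lying uniformly close to all three sides of any $\delta$--rough-geodesic triangle on $\{x,y,z\}$, and any two such points are uniformly close to each other. Given a $\lambda$--quasiisometric embedding $f:(X,\dist_X)\to(Y,\dist_Y)$ of $\delta$--hyperbolic spaces, the goal is therefore to show that $f(\mu_X(x,y,z))$ lies uniformly close to all three sides of some rough-geodesic triangle in $Y$ on $(fx,fy,fz)$, so that it must be close to $\mu_Y(fx,fy,fz)$.

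First I would fix $x,y,z\in X$ and a $\delta$--rough-geodesic triangle $T_X$ on $(x,y,z)$, and let $m = \mu_X(x,y,z)$. By definition of the coarse median, $m$ lies within some distance $D=D(\delta)$ of each side of $T_X$. Applying $f$, each side of $T_X$ is sent to a $\lambda'$--quasigeodesic in $Y$ (with $\lambda'=\lambda'(\lambda)$), and $f(m)$ lies within distance $\lambda D+\lambda$ of each of these image quasigeodesics. Fix now a $\delta$--rough-geodesic triangle $T_Y$ on $(fx,fy,fz)$ in $Y$. By the Morse lemma in hyperbolic spaces, each side of $T_Y$ has Hausdorff distance at most some $M=M(\delta,\lambda')$ from the corresponding image quasigeodesic. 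Combining the two bounds, $f(m)$ lies within distance $\lambda D + \lambda + M$ of each side of $T_Y$.

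To conclude, I would invoke the fact that in a $\delta$--hyperbolic space any point that lies within distance $C$ of all three sides of a $\delta$--rough-geodesic triangle is within distance $C'=C'(\delta,C)$ of the coarse median of the vertices; this is the standard thin-triangles characterisation of the centre. Applied to $f(m)$ and $\mu_Y(fx,fy,fz)$ with respect to $T_Y$, this yields
\[
\dist_Y\bigl(f(\mu_X(x,y,z)),\,\mu_Y(fx,fy,fz)\bigr) \le \lambda'',
\]
with $\lambda''$ depending only on $\delta$ and $\lambda$, which is the desired quasimedian constant.

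The proof is essentially a bookkeeping exercise, so the only real obstacle is making sure that each appeal to stability (Morse lemma, closeness of any "centre-like" point to the coarse median) is quantified uniformly in $\delta$ and $\lambda$ alone. Since both inputs are standard and the coarse median is only defined up to a bounded ambiguity in the first place, this absorbs cleanly into the final constant $\lambda'$.
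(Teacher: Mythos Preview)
Your argument is correct and is exactly the standard proof of this fact: push a rough-geodesic triangle through $f$, use the Morse lemma to straighten the sides, and then use thinness of triangles to identify $f(\mu_X(x,y,z))$ with $\mu_Y(fx,fy,fz)$ up to bounded error. The paper itself does not prove this lemma; it simply cites it from \cite[Lem.~2.9]{petyt:mapping}, so there is nothing further to compare.
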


\begin{proposition} \label{prop:cylinders_qi_invariant}
Let $X$ and $Y$ be roughly geodesic hyperbolic spaces and assume that there is a quasiisometry $\phi:Y\to X$. If $X$ has globally stable cylinders, then so does $Y$. 

Suppose that a group $G$ acts on both $X$ and $Y$ and that $\phi$ is $G$--equivariant. If the cylinders on $X$ are $G$--equivariant, then the cylinders on $Y$ can also be taken to be $G$--equivariant.
\end{proposition}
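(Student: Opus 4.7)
The plan is to transport the cylinder structure from $X$ to $Y$ by pulling back through $\phi$ a controlled thickening of the cylinders on $X$. Writing $(\lambda,C)$ for the QI constants of $\phi$, $M$ for the Morse constant for quasigeodesics in $X$, and $\theta_X,k,R$ for the data from $C_X$, I would set
\[
C_Y(y,y') \;:=\; \phi^{-1}\!\bigl(N_D(C_X(\phi y, \phi y'))\bigr)
\]
for a constant $D$ chosen large enough to absorb all the bounded errors that arise. The key inputs are the Morse lemma, Lemma~\ref{lem:delta'}, and Lemma~\ref{lem:hyp_qi_is_qm}; the latter guarantees $\phi$ is quasimedian, so $\phi\mu(y_1,y_2,y_3)$ lies within a uniform constant of $\mu(\phi y_1,\phi y_2,\phi y_3)$.

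The cylinder property would follow routinely. The image $\phi\gamma_Y$ of a $\delta_Y$-rough geodesic from $y$ to $y'$ is Morse-close to a $\delta_X$-rough geodesic, itself contained in $C_X(\phi y,\phi y')$, giving $\gamma_Y\subset C_Y(y,y')$ for $D\geq M$; the reverse containment $C_Y(y,y')\subset N_{\theta_Y}(\gamma_Y)$ follows from the QI bounds by tracing through to obtain $\theta_Y=\lambda(D+\theta_X+M+C)$. Reversibility is immediate from that of $C_X$, and in the equivariant setting, $\phi^{-1}$ and $N_D$ commute with isometric $G$-actions, giving $C_Y(gy,gy')=gC_Y(y,y')$.

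The hard part will be transferring stability. Given $y_1,y_2,y_3\in Y$ with images $x_i\in X$, medians $m_Y:=\mu(y_1,y_2,y_3)$ and $m_X:=\mu(x_1,x_2,x_3)$, and stability balls $B_1^X,\ldots,B_k^X$ for $C_X$ at $(x_1,x_2,x_3)$, I would begin by observing that any $z\in C_Y(y_1,y_2)\cap B_Y(y_1,\langle y_2,y_3\rangle_{y_1})$ fellow-travels within a uniform constant of a rough geodesic $\tilde\gamma_Y$ from $y_1$ to $m_Y$. Then $\phi z$ lies within a bounded constant of a geodesic $\gamma_X'$ from $x_1$ to $\phi m_Y\approx m_X$, yielding a witness $x_z\in C_X(x_1,x_2)$ for $\phi z$ with $d_X(x_1,x_z)\leq\langle x_2,x_3\rangle_{x_1}+\mathrm{const}$---critically, this is a \emph{bounded} excess, not the naive $(\lambda-1/\lambda)\langle y_2,y_3\rangle_{y_1}$ that a direct QI estimate would give. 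Using Lemma~\ref{lem:delta'} to extend $\gamma_X'$ into a rough geodesic from $x_1$ to $x_2$ via $m_X$, I would then shift $x_z$ along the rough geodesic by this bounded amount into $C_X(x_1,x_2)\cap B_X(x_1,\langle x_2,x_3\rangle_{x_1})$ and apply $(k,R)$-stability of $C_X$, concluding that either $z\in C_Y(y_1,y_3)$ or $z$ lies in one of $k$ $Y$-balls of radius $R'=\lambda(R+\mathrm{const})$. The main obstacle throughout is this delicate excess control, which requires Morse and the quasimedian property rather than just triangle inequalities, in order to ensure $k$ and $R'$ remain uniform constants.
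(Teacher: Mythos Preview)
Your definition of $C_Y$ and the verification of the cylinder, reversibility, and equivariance properties are essentially the same as the paper's (the paper takes a $Y$--neighbourhood of the $\phi$--preimage rather than the $\phi$--preimage of an $X$--neighbourhood, but this is cosmetic). The stability argument, however, has a genuine gap: a circularity in the constants that cannot be closed.

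The problem is the step where, having found $x_z'\in C_X(x_1,x_2)\cap B_X(x_1,\langle x_2,x_3\rangle_{x_1})$ and applied $X$--stability to get $x_z'\in C_X(x_1,x_3)$, you want to conclude $z\in C_Y(y_1,y_3)$. For that you need $\dist_X(\phi(z),C_X(x_1,x_3))\le D$. But your bound on $\dist_X(\phi(z),x_z')$ passes through the estimate ``$z$ fellow-travels $\tilde\gamma_Y$ within a uniform constant''. That constant is not uniform in $D$: membership in $C_Y(y_1,y_2)$ only puts $z$ within $\theta_Y$ of a rough geodesic $y_1\to y_2$, and $\theta_Y\approx\lambda D+\text{const}$. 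Tracing through, $\dist_X(\phi(z),x_z')$ is bounded below by roughly $\lambda^2 D$, so the inequality $\dist_X(\phi(z),x_z')\le D$ can never be arranged, regardless of how large $D$ is chosen. The shift trick does not help: the shift itself is bounded independently of $D$ (your quasimedian observation about the excess is correct), but the distance $\dist_X(\phi(z),x_z)$ before shifting already carries the $\theta_Y$--dependence.

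The paper sidesteps this by working directly with the \emph{symmetric difference}. Starting from $p\in C_Y(x,y)\smallsetminus C_Y(x,z)$, one immediately obtains a point $q$ at distance $\le\kappa$ from $p$ with $\phi(q)\in C_X(\phi x,\phi y)\smallsetminus C_X(\phi x,\phi z)$ --- this uses both the positive and negative information in the hypothesis and requires no construction. One then splits on whether $\dist_X(\phi x,\phi q)\le\langle\phi y,\phi z\rangle_{\phi x}$: if so, $X$--stability forces $\phi(q)$ into one of the $k$ balls; if not, one shows $p$ lies uniformly close to $\mu_Y(x,y,z)$. There is never any need to conclude membership in $C_Y(x,z)$, so the circularity does not arise. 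The cost is an extra ball centred at the median, giving $(k+1,R')$--stability rather than $(k,R')$; see Remark~\ref{rem:stability}.
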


\begin{proof}
Let $C^X(x,y)$ be a globally $(k,R)$--stable choice of $\theta$--cylinders on $X$. Let $\delta$ be such that $X$ and $Y$ are both $\delta$--hyperbolic and $\delta$--roughly geodesic, and let $\delta'$ be as in Lemma~\ref{lem:delta'}. Let $\bar\phi$ be a quasiinverse to $\phi$. Let $\lambda$ be such that $\phi$ and $\bar\phi$ are $\lambda$--quasimedian $\lambda$--quasiisometries with $\dist_Y(\bar\phi\phi(y),y)\le\lambda$ for all $y\in Y$ (see Lemma~\ref{lem:hyp_qi_is_qm}). By the Morse lemma, there is a constant $\kappa=\kappa(\delta,\lambda)$ with the following property: if $x_1,y_1,x_2,y_2\in Y$ have $\dist_Y(x_1,x_2)\le\lambda$ and $\dist_Y(y_1,y_2)\le\lambda$, then for any $(\lambda, \lambda\delta+\lambda)$--quasigeodesic $\gamma$ from $x_2$ to $y_2$, every $\delta'$--rough geodesic from $x_1$ to $y_1$ is at Hausdorff-distance at most $(\kappa-2\lambda)$ from $\gamma$. 

Given $x,y\in Y$, set
\[
C^Y(x,y) \,=\, \N_\kappa\big(\{z\in Y\,:\, \phi(z)\in C^X(\phi(x),\phi(y))\}\big).
\]
We claim that $C^Y(x,y)$ is a globally stable choice of cylinders. Observe that $C^Y(x,y)=C^Y(y,x)$ for all $x,y\in Y$, because the $X$--cylinders are reversible.

\medskip

Let us show that $C^Y(x,y)$ is indeed a cylinder. Since $C^X(\phi(x),\phi(y))$ is a $\theta$--cylinder, it contains a $\delta$--rough geodesic $\gamma$ from $\phi(x)$ to $\phi(y)$, and $\bar\phi\gamma$ is a $(\lambda,\lambda\delta+\lambda)$--quasigeodesic from $\bar\phi\phi(x)$ to $\bar\phi\phi(y)$. Every $z\in Y$ with $\phi(z)\in\gamma$ lies $\lambda$--close to $\bar\phi\gamma$, so the choice of $\kappa$ ensures that every $\delta'$--rough geodesic, hence every $\delta$--rough geodesic, from $x$ to $y$ is contained in $C^Y(x,y)$. Conversely, for each $z\in C^Y(x,y)$, there is some $z'$ with $\phi(z')\in C^X(\phi(x),\phi(y))$ and $d(z,z')\leq \kappa$. Because $C^X(\phi(x),\phi(y))\subset\N_\theta(\gamma)$, there is some $z''\in\gamma$ with $\dist_X(\phi(z'),z'')\le\theta$. It follows that $z'$ lies $(\lambda\theta+2\lambda)$--close to $\bar\phi\gamma$, and hence $z$ is $(2\kappa + \lambda \theta)$--close to every $\delta'$--rough geodesic from $x$ to $y$. We have shown (slightly more than) that $C^Y(x,y)$ is a $(2\kappa+\lambda\theta)$--cylinder.

\medskip

Regarding equivariance, observe that if $g\in G$, then
\[
gC^Y(x,y) \,=\, \N_\kappa\big(\{gz\,:\,g\phi(z)\in gC^X(\phi(x),\phi(y))\}\big) \,=\, C^Y(gx,gy),
\]
because $\phi$ and the $X$--cylinders are assumed to be $G$--equivariant in that statement.

\medskip

It remains to show that the $Y$--cylinders are globally stable. Fix $x,y,z\in Y$, and suppose that $p\in Y$ has $\dist_Y(x,p)\le\sgen{y,z}_x$ but $p\in C^Y(x,y)\ssm C^Y(x,z)$. By definition, there is some $q\in Y$ with $\dist_Y(p,q)\le\kappa$ such that $\phi(q)\in C^X(\phi(x),\phi(y))\ssm C^X(\phi(x),\phi(z))$. To simplify notation, let us write $x'=\phi(x)$, $y'=\phi(y)$, and $z'=\phi(z)$.

If $\dist_X(x',\phi(q))\le\sgen{y',z'}_{x'}$, then since the $X$--cylinders are $(k,R)$--stable, $\phi(q)$ lies in one of a fixed set of $k$ balls of radius $R$ in $X$. In this case, $\bar\phi\phi(q)$ lies in one of a fixed set of $k$ balls of radius $\lambda R+\lambda$ in $Y$, and hence $p$ lies in one of a fixed set of $k$ balls of radius $\kappa+\lambda R+2\lambda$ in $Y$.

Otherwise, $\dist_X(x',\phi(q))>\sgen{y',z'}_{x'}$. We shall use the fact that $\dist_Y(x,p)\le\sgen{y,z}_x$ to show that $q$, and hence $p$, lies uniformly close to $m=\mu_Y(x,y,z)$. This will complete the proof. 

Note that $|\dist_Y(x,m)-\sgen{y,z}_x|\le3\delta$. Let $\gamma_{xm}$ be a $\delta$--rough geodesic from $x$ to $m$ and let $\gamma_{my}$ be a $\delta$--rough geodesic from $m$ to $y$. By Lemma~\ref{lem:delta'}, $\gamma=\gamma_{xm}\cup\gamma_{my}$ is a $\delta'$--rough geodesic. By the cylinder property of $C^Y(x,y)$ shown above, there exists $q_1\in\gamma$ with $\dist_Y(q,q_1)\le 2\kappa+\theta\lambda$. 

If $q_1\in\gamma_{my}$, then the fact that $\dist_Y(x,q_1)\le\sgen{y,z}_x+3\kappa+\theta\lambda$ means that $\dist_Y(m,q_1)\le2\kappa+\theta\lambda+3\delta+2\delta'$, because $\gamma$ is a $\delta'$--rough geodesic. In this case, we have $\dist_Y(p,m)\le6\kappa+2\theta\lambda+3\delta+2\delta'$. 

Otherwise, $q_1\in\gamma_{xm}$. In this case, $\phi(q_1)$ lies on a $(\lambda,\lambda\delta+\lambda)$--quasigeodesic from $x'$ to $\phi(m)$. Because $\phi$ is $\lambda$--quasimedian, the Morse lemma tells us that there is a universal constant $\lambda'=\lambda'(\lambda,\delta)$ such that $\phi(q_1)$ lies $\lambda'$--close to some point $q'_2$ lying on a $\delta$--rough geodesic from $x'$ to $m'=\mu_X(x',y',z')$. Note that $|\dist_X(x',m')-\sgen{y',z'}_{x'}|\le3\delta$. Moreover, we have 
\[
\dist_X(\phi(q),\phi(q_1)) \,\le\, \lambda\dist_Y(q,q_1)+\lambda \,\le\, \lambda(2\kappa+\theta\lambda+1).
\]
So by our current assumptions on $q$, we have 
\[
\dist_X(x',q'_2) \,>\, \sgen{y',z'}_{x'}-\lambda(2\kappa+\theta\lambda+1)-\lambda' \,\ge\, \dist_X(x',m')-\lambda(2\kappa+\theta\lambda+1)-\lambda'-3\delta.
\]
As $q'_2$ lies on a $\delta$--rough geodesic from $x'$ to $m'$, we therefore have 
\[
\dist_X(q'_2,m') \,\le\, \lambda(2\kappa+\theta\lambda+1)+\lambda'+5\delta.
\]
Using the fact that $\bar\phi\phi$ differs from the identity by at most $\lambda$, the triangle inequality now yields
\[
\dist_Y(p,m) \,\le\, \dist_Y(p,q_1) + \lambda + \dist_Y(\bar\phi\phi(q_1),\bar\phi(q'_2)) 
	+ \dist_Y(\bar\phi(q'_2),\bar\phi(m')) + \dist_Y(\bar\phi(m'),\bar\phi\phi(m)) + \lambda,
\]
and we have shown that the latter expression is uniformly bounded in terms of $\delta$, $\lambda$, and $\theta$.

To sum up, we have shown that there is a constant $R'=R'(R,\delta,\lambda,\theta)$ such that $Y$ has globally $(k+1,R')$--stable cylinders, where the additional ball that is removed when considering $x,y,z\in Y$ compared to $\phi(x),\phi(y),\phi(z)$ is centred on $\mu_Y(x,y,z)$.
\end{proof}

\begin{remark} \label{rem:stability}
The above proof shows that, in general, if $X$ has globally $(k,R)$--stable cylinders, then $Y$ has globally $(k+1,R')$--stable cylinders. However, if in $X$ it happens that, for any $x,y,z\in X$, one of the balls $B_i$ that is removed is centred on the median $\mu_X(x,y,z)$, then the proof shows the stronger statement that $Y$ has globally $(k,R')$--stable cylinders.
\end{remark}

\subsection{Dualisable systems} \label{subsec:duals}

The material in this section comes from \cite{petytzalloum:constructing}. 

A \emph{set with walls} is a pair $(S,W)$, where $S$ is a set and $W$ is a set of bipartitions $h=\{h^+,h^-\}$ of $S$. That is, $h^-,h^+\subset S$ have $S=h^-\cup h^+$ and $h^-\cap h^+=\varnothing$. We refer to $h$ as a \emph{wall}, and to $h^\pm$ as the \emph{halfspaces} of $h$.

\begin{definition}[Ultrafilter] \label{def:filter}
A \emph{filter} $\phi$ on $W$ consists of a subset $W'\subset W$ and a choice of halfspace $\phi(h)\in\{h^+,h^-\}$ for each $h\in W'$ such that: 
\[
\text{if }h_1,h_2\in W'\text{ have }h_1^+\subset h_2^+\subset S,\text{ then }\phi(h_1)=h_1^+\text{ implies that }\phi(h_2)=h_2^+.
\]
We say that $\phi$ is \emph{supported} on $W'$. An \emph{ultrafilter} is a filter whose support is $W$. 
\end{definition}

Each $s\in S$ determines an ultrafilter $\phi_s$ by setting $\phi_s(h)$ to be the halfspace containing $s$, for each $h\in W$. Let $\hat S$ be the set of all ultrafilters on $W$. Although it is not in general injective, we view the map $S\to\hat S$ given by $s\mapsto\phi_s$ as an inclusion map of $S$ in $\hat S$. We say that $h_1,h_2\in W$ \emph{cross} if all four orientations of $h_1$ and $h_2$ are filters; that is, if $S$ meets all four quarterspaces.

The set $\hat S$ is equipped with a ternary operation $\mu$ that makes it a median algebra. Indeed, given $\phi_1,\phi_2,\phi_3\in\hat S$, for each $h\in W$ we set $\mu(\phi_1,\phi_2,\phi_3)(h)$ to be the halfspace of $h$ selected by the majority of the $\phi_i$.

We say that $c\subset W$ \emph{separates} $x\in\hat{S}$ from $y\in\hat S$ if $x(h)\ne y(h)$ for all $h\in c$. That is, $x$ and $y$ orient the elements of $c$ oppositely. A sequence $(h_i)$ of walls is called a \emph{chain} if each $h_i$ separates $h_{i-1}$ from $h_{i+1}$.

Given a collection $\D$ of subsets of $W$ (not necessarily a collection of chains), let $\dist_{\D}$ be the function on $\hat S\times\hat S$ given by 
\[
\dist_\D(x,y)=\sup\{|c|:c\in\D\text{ separates }x\text{ from }y\},
\]
which takes values in $\mathbf N\cup\{\infty\}$. The following is a key definition in \cite{petytzalloum:constructing}.

\begin{definition}[Dualisable system, dual space] \label{def:dualisable_system} 
Let $(S,W)$ be a set with walls, and let $\D\subset2^W$ be closed under taking subsets, with $\{h\}\in\D$ for each $h\in W$. We say that $\D$ is a \emph{dualisable system} on $W$ if $\dist_\D(\phi_s,\phi_t)<\infty$ for each $s,t\in S$. 

The \emph{$\D$--dual} of $S$ is the set $S_\D=\{x\in\hat S\,:\dist_\D(x,\phi_s)<\infty\text{ for all }s\in S\}$. It is equipped with the function $\dist_\D$ and the ternary operation $\mu$, which are, respectively, a metric and a median \cite[Lem.~3.4, 3.6]{petytzalloum:constructing}.
\end{definition}

If every element of a dualisable system $\D$ is a chain, then we refer to $\D$ as a dualisable system of chains.

The following is a strong form of convexity for subsets of $S_\D$; see \cite[Lem.~3.11, 3.13]{petytzalloum:constructing}.

\begin{definition} \label{def:gated}
A nonempty subset $A\subset S_\D$ is \emph{gated} if there is a set $H$ of halfspaces of elements of $W$ such that $A=S_\D\cap\bigcap_{h^+\in H}h^+$. Equivalently, there is a filter $\psi$ supported on some $W'\subset W$ such that $A=\{\phi\in S_\D\,:\,\phi|_{W'}=\psi\}$.
\end{definition}

Every gated subset $A\subset S_\D$ comes with a \emph{gate map} $\g_A:S_\D\to A$. Given $x\in S_\D$, the ultrafilter $\g_A(x)$ is obtained from $x$ by switching the orientation of exactly the walls separating $x$ from $A$. The following two lemmas give a natural source of examples of gated subsets.

\begin{lemma}[{\cite[Lem.~4.2]{petytzalloum:constructing}}] \label{lem:balls_gated}
If $\D$ is a dualisable system of chains, then every ball in $S_\C$ is gated.
\end{lemma}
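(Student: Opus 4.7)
The plan is to verify the first form of gatedness from Definition~\ref{def:gated} directly: let $H$ be the collection of all halfspaces of walls in $W$ that contain $B = B(x_0, r)$, and show $B = S_\D \cap \bigcap_{h^+ \in H} h^+$. One inclusion is tautological. For the reverse, I would fix $y \in S_\D$ with $\dist_\D(x_0, y) > r$ and exhibit a single halfspace in $H$ that misses $y$.

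For such a $y$, the definition of $\dist_\D$ supplies a chain $c = (h_1, \ldots, h_n) \in \D$ of length $n > r$ that separates $x_0$ from $y$. Orient the walls so that $x_0 \in h_i^+$ (hence $y \in h_i^-$) for every $i$. Because $c$ is a chain, the halfspaces $h_i^+$ are linearly nested, and after reindexing I may assume $h_1^+ \subsetneq h_2^+ \subsetneq \cdots \subsetneq h_n^+$. The key structural observation is then that for \emph{any} $z \in S_\D$ the set of indices with $z \in h_i^-$ forms an initial segment $\{1, \ldots, i(z)-1\}$ of $\{1, \ldots, n\}$.

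Applying this to $z \in B$: the walls of $c$ separating $x_0$ from $z$ are precisely $h_1, \ldots, h_{i(z)-1}$. Since $\D$ is closed under taking subsets, this sub-chain lies in $\D$, so $i(z) - 1 \le \dist_\D(x_0, z) \le r$, giving $i(z) \le r+1$. Because $n > r$, the wall $h_{r+1}$ is in fact present in $c$, and every $z \in B$ lies in $h_{r+1}^+$. So $h_{r+1}^+ \in H$; but $y \in h_{r+1}^-$, and hence $y \notin h_{r+1}^+$, as required.

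The main thing to be careful about is the first step of the second paragraph: extracting from the bare definition of ``chain'' a consistent linear nesting of the halfspaces after orienting all of them to contain $x_0$. Once that is in hand the remainder is just the interplay between this linear order and the distance bound via closure of $\D$ under subsets, together with some off-by-one bookkeeping on the index $r+1$.
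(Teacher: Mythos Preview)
The paper does not give its own proof of this lemma: it is quoted verbatim from \cite[Lem.~4.2]{petytzalloum:constructing}, so there is no in-paper argument to compare against. That said, your proposal is a correct proof and is exactly the natural one. The only nontrivial input is the point you already flag, namely that a chain in $\D$ separating $x_0$ from $y$ can, after orienting every wall towards $x_0$, be reindexed so that $h_1^+\subset h_2^+\subset\cdots\subset h_n^+$; this follows from the chain condition (each $h_i$ separates $h_{i-1}$ from $h_{i+1}$) together with the fact that adjacent walls in the chain do not cross and both separate $x_0$ from $y$, and an induction along the chain. With that in hand, closure of $\D$ under subsets gives $i(z)-1\le\dist_\D(x_0,z)\le r$ for $z\in B$, whence $B\subset h_n^+$ while $y\in h_n^-$, which is what you wrote (using $h_n$ rather than $h_{r+1}$ avoids any fuss about whether $r$ is an integer, though $\dist_\D$ is integer-valued anyway).
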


\begin{lemma} \label{lem:gating_halfspaces}
Let $A\subset S_\D$ be a gated subset. For any halfspace $h^-$, the gate $\g_A(h^-)$ is gated.
\end{lemma}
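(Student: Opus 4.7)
The plan is to describe $\g_A(h^-)$ as a gated subset by explicitly giving its defining filter, using the filter characterisation of Definition~\ref{def:gated}. Fix a filter $\psi$ on $W'\subseteq W$ with $A=\{\phi\in S_\D:\phi|_{W'}=\psi\}$.

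Two cases are immediate. If $h\notin W'$, then the gate map preserves the $h$-orientation, so $\g_A(h^-)=A\cap h^-$, gated by $\psi\cup\{h\mapsto h^-\}$. If $h\in W'$ and $\psi(h)=h^-$, then $A\subseteq h^-$, which forces $\g_A(h^-)=A$.

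The main case is $h\in W'$ with $\psi(h)=h^+$, so $A\cap h^-=\emptyset$. Here I would extend $\psi$ to a filter $\psi'$ on $W''\supseteq W'$ by adjoining, for each wall $k\notin W'$ admitting some halfspace $k^\sigma$ with $h^-\subsetneq k^\sigma$, the choice $\psi'(k):=k^\sigma$; on $W'$ set $\psi'=\psi$. Consistency of $\psi'$ as a filter follows from transitivity of strict halfspace inclusion combined with the consistency of $\psi$. For the containment $\g_A(h^-)\subseteq A':=\{\phi\in S_\D:\phi|_{W''}=\psi'\}$, any $x\in h^-\cap S_\D$ has $x(k)=k^\sigma$ at each adjoined $k$ by the ultrafilter condition, and since those walls lie outside $W'$ the gate $\g_A(x)$ inherits these orientations.

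The main obstacle is the reverse inclusion $A'\subseteq\g_A(h^-)$. Given $\phi\in A'$, the natural candidate preimage is $x:=\g_{h^-}(\phi)$, available because $h^-$ is itself gated with defining filter $\{h\mapsto h^-\}$. The walls that $\g_{h^-}$ flips from $\phi$ are exactly those $k$ with $\phi(k)$-side strictly contained in $h^+$; these consist of $h$ together with possibly some walls in $W'$, but never any wall in $W''\setminus W'$, because $\psi'$ already orients such walls toward $h^-$. Thus $x$ and $\phi$ agree on every wall outside $W'$, and applying $\g_A$ flips back precisely the walls in $W'$ where $\g_{h^-}$ had changed $\phi$, giving $\g_A(x)=\phi$. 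Verifying this matching of flips carefully, and confirming that the construction lands in $S_\D$, is the technical core of the argument.
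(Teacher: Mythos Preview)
Your approach matches the paper's: describe $\g_A(h^-)$ by extending the defining filter of $A$ so as to orient towards $h^-$ every wall outside $W'$ that has a halfspace containing $h^-$. The paper does this in one stroke without a case split.

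Your case split, however, introduces a genuine (though easily repaired) error. In Case~1 you assert that if $h\notin W'$ then $\g_A$ preserves the $h$--orientation, giving $\g_A(h^-)=A\cap h^-$. But the gate map flips exactly those walls separating a point from the \emph{set} $A$, not merely the walls in the chosen support $W'$; and $A$ may well lie entirely in $h^+$ even when $h\notin W'$, since the filter $\psi$ need not be maximal. In that situation $\g_A$ does flip $h$, so $\g_A(h^-)\subset h^+$ is nonempty while $A\cap h^-=\varnothing$. The same implicit maximality assumption reappears in your main case when you claim $\g_A$ ``flips back precisely the walls in $W'$'': this needs that no wall outside $W'$ has a halfspace containing $A$.

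The fix is to first replace $(\psi,W')$ by the maximal filter, taking $W'$ to be the set of all walls with a halfspace containing $A$. With that choice every step of your argument is correct, and in fact your main-case construction then handles all three cases uniformly, so the split becomes unnecessary.
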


\begin{proof}
By definition, there is a set $U\subset W$ and a filter $\phi$ on $U$ such that $A=S_\D\cap\bigcap_{k\in U}\phi(k)$. Let $V$ be the set of all walls not in $U$ that have a halfspace containing $h^-$. For $k\in V$, set $\phi(k)$ to be that halfspace. One can check that $\phi$ is a filter on $U\cup V$, and $\g_A(h^-)=S_\D\cap\bigcap_{k\in U\cup V}\phi(k)$.
\end{proof}

The following are useful properties for a dualisable system to satisfy.

\begin{definition} \label{def:gluable_separated}
Let $\D$ be a dualisable system of chains.
\begin{itemize}
\item 	$\D$ is \emph{$m$--gluable} if, whenever $c_1,c_2\in\D$ are such that $c_1\cup c_2$ is a chain with $c_1\subset c_2^-$, there is some $d\subset c_1\cup c_2$ with $|d|\leq m$ such that $(c_1\cup c_2)\ssm d\in \D$.
\item 	$\D$ is \emph{$L$--separated} if for any $\{h_1,h_2\} \in \D$  and $c \in \D$, if every member of $c$ crosses both $h_1$ and $h_2$, then $|c| \leq L$.
\end{itemize}
\end{definition}

(Although there is a minor difference between the notion of gluability above and the one in \cite{petytzalloum:constructing}, this does not affect any arguments. The definition here has slightly more flexibility.) These notions provide a way to build hyperbolic spaces.

\begin{proposition}[{\cite[Cor.~5.7]{petytzalloum:constructing}}] \label{prop:dual_hyp}
If $\D$ is a separated, gluable dualisable system of chains on a set with walls $(S,W)$, then the dual space $S_\D$ is a roughly geodesic hyperbolic space.
\end{proposition}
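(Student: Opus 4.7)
The plan is to establish the two conclusions (roughly geodesic and hyperbolic) separately, with gluability furnishing the former and separation furnishing the latter.

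For rough geodesicity, fix $x,y \in S_\D$ with $n = \dist_\D(x,y)$. By definition of the sup, choose a chain $c = (h_1,\ldots,h_k) \in \D$ separating $x$ from $y$ with $k$ as close to $n$ as desired, enumerated in the order along the chain. Define a putative path $x_0, x_1, \ldots, x_k$ by letting $x_i$ be the ultrafilter that agrees with $x$ on $W \setminus \{h_1,\ldots,h_i\}$ and with $y$ on $\{h_1,\ldots,h_i\}$. The first thing to verify is that each $x_i$ actually lies in $S_\D$, i.e.\ is $\dist_\D$--finite distance from some (hence every) $\phi_s$; this follows because any chain in $\D$ separating $x_i$ from $\phi_s$ decomposes into (at most two) sub-chains separating $x_i$ from $x$ and $x$ from $\phi_s$, and sub-chains of chains in $\D$ are in $\D$. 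Consecutive $x_{i-1}, x_i$ differ by exactly one wall, so $\dist_\D(x_{i-1},x_i) \leq 1$, while $\dist_\D(x_0,x_k) \geq k$ since $c$ itself separates them. Hence $(x_0,\ldots,x_k)$ is a discrete $1$--Lipschitz path of length $k$ with endpoints at distance at least $k$, which makes it a rough geodesic. The gluability hypothesis is what rules out pathology in the case $k < n$: given two separating chains $c_1,c_2$ with $|c_1 \cup c_2|$ approaching $n$ but $c_1 \cup c_2 \notin \D$, we glue after removing at most $m$ walls, so $n \leq k + m$, giving a $(1, m)$--quasiisometric embedding.

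For hyperbolicity, I would verify the Gromov four-point (or thin-triangle) condition. Given $x,y,z \in S_\D$, the median $m = \mu(x,y,z)$ lies in $S_\D$. The walls of $W$ that separate some pair among $x,y,z$ split into three classes according to which singleton they separate from the other two, and any chain separating, say, $x$ from $y$ decomposes as a sub-chain separating $x$ from $m$ followed by one separating $m$ from $y$. To prove thin triangles, I would take a point $p$ on the rough geodesic from $x$ to $y$ constructed above and exhibit a uniformly close point on the rough geodesic from $x$ to $z$. The walls that could witness a failure of fellow-travelling are those separating $p$ from $z$ that do not separate $x$ from $z$; any such wall $h$ must cross a wall separating $x$ from $m$ and a wall separating $y$ from $m$. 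Taking one of each pair to form a two-element chain in $\D$, the separation hypothesis bounds the number of such $h$ that can fit into a single chain of $\D$ by the constant $L$. Translating this back into $\dist_\D$ via the previous paragraph yields a uniform thinness constant depending only on $L$ and $m$.

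The hardest step will be the hyperbolicity argument, specifically the combinatorial accounting that turns ``every chain $\{h_1,h_2\} \in \D$ of length two is crossed by only boundedly many walls of any chain in $\D$'' into a genuine slim-triangle estimate. In a CAT(0) cube complex the analogous role is played by finite dimension, which gives a uniform bound on the number of walls crossing a fixed wall; here the bound is more subtle because it is only activated by \emph{pairs} of walls, so one must cleverly choose the witnessing pair for each wall one wishes to control. The gluability step, while conceptually straightforward, also requires care to ensure that the $m$--loss at each gluing does not compound when one iterates the construction along the path, but this is a routine bookkeeping task once the framework is in place.
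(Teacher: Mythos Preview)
The paper does not prove this proposition: it is quoted verbatim from \cite[Cor.~5.7]{petytzalloum:constructing}, so there is no in-paper argument to compare against. That said, your proposal for rough geodesicity has a genuine gap. The assignment ``let $x_i$ be the ultrafilter that agrees with $x$ on $W\smallsetminus\{h_1,\dots,h_i\}$ and with $y$ on $\{h_1,\dots,h_i\}$'' need not define an ultrafilter at all. Concretely, suppose $l\in W\smallsetminus c$ also separates $x$ from $y$ and satisfies $h_1^+\subset l^+$ (nothing prevents this: $c$ is merely a $\D$--maximal chain, not the set of \emph{all} separating walls). Then $x_1(h_1)=h_1^+$ forces $x_1(l)=l^+$ by the filter condition, but your definition gives $x_1(l)=x(l)=l^-$. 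There is a second, related problem: even granting that the $x_i$ are ultrafilters, the endpoint $x_k$ agrees with $y$ only on $c$ and with $x$ elsewhere, so $x_k\neq y$ whenever $x$ and $y$ differ on a wall outside $c$; your path does not reach $y$. The role you assign to gluability (``rules out pathology in the case $k<n$'') is also off: since $\dist_\D$ is an integer-valued supremum, the maximum is attained and $k=n$ automatically.

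The construction that actually works---and which this paper repeatedly invokes via \cite[Prop.~4.6]{petytzalloum:constructing}---builds \emph{median} paths: one moves from $x$ toward $y$ by iterated gates/medians rather than by flipping walls in a single witnessing chain, and gluability is what controls the additive error when concatenating two such median paths (cf.\ \cite[Lem.~5.3]{petytzalloum:constructing}, used in the proof of Proposition~\ref{prop:dual_SC}). Your hyperbolicity sketch is closer to the mark in spirit, and your identification of separation as the mechanism is correct, but as you yourself note the passage from ``bounded chains crossing a fixed pair'' to an honest slim-triangles estimate is where the work lies; the outline as written does not yet supply it.
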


The following strengthens Proposition~\ref{prop:dual_hyp} in the case where the system is $0$--separated.

\begin{proposition} \label{prop:0-sep_quasitree}
If $\D$ is a 0--separated, $m$--gluable dualisable system of chains on a set with walls $(S,W)$, then the dual space $S_\D$ is a quasitree.
\end{proposition}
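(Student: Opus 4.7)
The plan is to invoke Manning's bottleneck criterion for quasitrees: a roughly geodesic space is a quasitree if and only if there exists $\Delta\ge 0$ such that for every pair of points $x,y$ there is a ``midpoint'' $m$ satisfying $\dist(x,m),\dist(m,y)\approx\dist(x,y)/2$ through whose $\Delta$--neighbourhood every rough geodesic from $x$ to $y$ must pass. By Proposition~\ref{prop:dual_hyp}, $S_\D$ is already a roughly geodesic hyperbolic space, so establishing the bottleneck is all that remains.

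The essential combinatorial input of 0--separation is the following tree-like property: if $\{h_1,h_2\}\in\D$, then no wall $h'\in W$ crosses both $h_1$ and $h_2$. Indeed, every singleton $\{h'\}$ lies in $\D$, and if $h'$ crossed both $h_1,h_2$, this singleton chain would force $1\le 0$ in the 0--separation condition. Consequently, for any chain $c=\{h_1,\dots,h_n\}\in\D$, every wall of $W$ crosses at most one wall of $c$. Walls in a chain of $\D$ thus behave like edges of a geodesic path in a tree: no external wall spans two of them.

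Given $x,y\in S_\D$, fix a chain $c=\{h_1,\dots,h_N\}\in\D$ separating $x$ from $y$ with $N\ge\dist_\D(x,y)-1$, and let $m$ be the ultrafilter obtained from $x$ by flipping the first $\lfloor N/2\rfloor$ walls of $c$ (this is consistent because $c$ is a chain, and $m\in S_\D$ since only finitely many walls are flipped). Applying the gluability hypothesis to the two halves of $c$ yields $\dist_\D(x,m),\dist_\D(m,y)\ge N/2$ up to an additive error depending only on the gluability constant. Suppose for contradiction that some $\delta$--rough geodesic from $x$ to $y$ passes through a point $p$ with $\dist_\D(p,m)>\Delta$ for $\Delta$ large. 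Then there is $c'\in\D$ of length exceeding $\Delta$ separating $p$ from $m$. By the tree-like property, each wall of $c'$ crosses at most one wall of $c$; using the chain structure together with $m$--gluability, one can splice $c'$ with appropriate portions of $c$ to obtain a chain from $x$ to $y$ of length greater than $N+\delta$, contradicting the near-maximality of $c$.

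The main obstacle is the final splicing-and-counting step: converting the purely combinatorial tree-like property into a quantitative bottleneck bound $\Delta$. This is precisely where $L=0$ is essential---any positive separation constant would permit walls to cross pairs of chain walls, allowing genuine detours around $m$ and breaking the bottleneck. I expect the argument to proceed by sorting the walls of $c'$ according to which (if any) wall of $c$ they cross, and then applying gluability to assemble the splice, yielding $\Delta$ depending only on the gluability constant and the hyperbolicity constant of $S_\D$.
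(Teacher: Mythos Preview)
Your outline has the right intuition---0--separation means no wall can span two walls of a $\D$--chain---but the ``splicing'' step hides a genuine case that does not go through as written. Suppose $p$ lies on a rough geodesic from $x$ to $y$ and $c'\in\D$ is a long chain separating $p$ from your point $m$. You want to splice $c'$ into $c$ to produce a chain from $x$ to $y$ longer than $N$. But a wall $h'\in c'$ need not separate $x$ from $y$ at all: it can have $x,y,m$ all on one side and $p$ on the other (picture $h'$ ``perpendicular'' to the $x$--$y$ direction, with $p$ on a detour). Such walls cannot be inserted into any chain from $x$ to $y$, and 0--separation only tells you each of them crosses at most one wall of $c$---it does not force them to separate $x$ from $y$. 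So the contradiction you sketch does not follow.

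This perpendicular case is exactly what the paper's proof spends its effort on. The paper uses the Bestvina--Bromberg--Fujiwara form of the bottleneck criterion (every point on a geodesic is close to every path, rather than your midpoint version), takes $z$ on a median path between two adjacent walls $h_1,h_2$ of a maximal chain, and analyses the nearest point $w$ on the detour path within $h_1^+\cap h_2^-$. The walls separating $z$ from $w$ split into those separating $h_1$ from $h_2$ (handled by your splicing idea and maximality) and those separating $w$ from both $h_1$ and $h_2$ (the perpendicular case). The latter requires a second application of the argument: one shows that some such wall $k$ must cross both $h_1$ and $h_2$, contradicting 0--separation, by choosing an auxiliary point $w'$ on the path and a further chain. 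Your proposal does not contain this step. As a secondary point, your $m$ is not obviously a coarse midpoint: it satisfies $\dist_\D(x,m)\approx N/2$, but $\dist_\D(m,y)$ is only bounded above by $N$, not by $N/2+O(1)$, so Manning's original midpoint criterion does not immediately apply.
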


\begin{proof}
By Proposition~\ref{prop:dual_hyp}, $S_\D$ is roughly geodesic hyperbolic space, so by \cite[Prop.~A.2]{petytsprianozalloum:hyperbolic} it is coarsely dense in a geodesic hyperbolic space. We shall use the following reformulation of Manning's bottleneck criterion to show that $S_\C$ is a quasitree; see \cite[Thm~4.6]{manning:geometry} for the original, and \cite[\S3.6]{bestvinabrombergfujiwara:constructing} for the reformulation.
\begin{displayquote}
A geodesic space $X$ is a quasitree if and only if there is some $\delta$ such that for any $x,y\in X$ and any path $\gamma$ from $x$ to $y$, every point on a geodesic from $x$ to $y$ lies $\delta$--close to $\gamma$. 
\end{displayquote}

Given $x,y\in S_\C$, let $\sigma_{xy}$ be a median, $3m$--roughly geodesic path in $S_\C$ from $x$ to $y$, the existence of which is given by \cite[Prop.~4.6]{petytzalloum:constructing}. By the Morse lemma and the coarse density of $S_\C$, it suffices to show that, for any $x,y\in S_\C$ and any path $\gamma\subset S_\C$ from $x$ to $y$, every point on $\sigma_{xy}$ is uniformly close to $\gamma$.

Given $z\in\sigma_{xy}$, consider an element $d\in\C$ realising $\dist_\C(x,y)$, and let $h_1,h_2\in W$ be the adjacent elements of $d$ such that $z\in h_1^+\cap h_2^-$. Let $\bar\gamma$ be the subpath of $\gamma$ contained in $h_1^+\cap h_2^-$, and let $w\in\bar\gamma$ be a point minimising $\dist_\C(z,w)$.

Suppose that $\dist_\C(z,w)\ge 3m+5$, and let $c\in\C$ realise $\dist_\C(z,w)$. Since $\sigma_{xy}$ is a median path, no element of $c$ can separate $z$ from both $h_1$ and $h_2$. Since $\C$ is 0--separated, at most one element of $c$ can cross each $h_i$. The remaining elements of $c$ must either: separate $h_1$ and $w$ from $h_2$ and $z$; or separate $h_2$ and $w$ from $h_1$ and $z$; or separate $w$ from $h_1$, $h_2$, and $z$. Thus, since $|c|\ge3m+5$, it must either have at least $2m+1$ elements separating $h_1$ from $h_2$, or at least $m+3$ elements separating $w$ from both $h_1$ and $h_2$.

In the first case, applying gluability of $\C$ twice, we get a subset of $d'\subset d\cup c$ with $d'\in\C$ and $|d'|\ge|d|+1$, which contradicts the maximality of $d$.

In the second case, depicted in Figure~\ref{fig:quasitree}, write $c=(k_1,\dots,k_p)$, with $k_i$ separating $w\in k_i^-$ from $k_{i+1}$. If $k_{m+3}$ does not cross $h_1$, then $k_{m+3}^-\subset h_1^+$, and there exists a last point $w'\in\bar\gamma$ such that $w'\in h_1^+\cap k_{m+3}^+$. Let $c'=(l_1,\dots,l_q)\in\C$ realise $\dist_\C(z,w')$. By the choice of $w$, we have $q\ge p$. By the choice of $w'$, at most one element of $c'$ can separate $w'$ from $k_{m+3}$. Because $\C$ is 0--separated, at most one element of $c'$ can cross $k_{m+3}$. Hence $(l_1,\dots,l_{q-2})$ all separate $z$ from $k_{m+3}$. As $\C$ is $m$--gluable, we find a subchain of $c\cup c'$ of length at least $(q-2)+(m+3)-m>q\ge p$ that is in $\C$ and separates $z$ from $w$. This contradicts the maximality of $c$, and hence $k_{m+3}$ crosses $h_1$.

A symmetric argument shows that $k_{m+3}$ also crosses $h_2$. This contradicts the 0--separation of $\C$. Hence $z$ is at distance at most $3m+4$ from $\gamma$.
\end{proof}

\begin{figure}[ht]
\centering
\includegraphics[height=6.5cm, trim = 0mm 4mm 0mm 4mm]{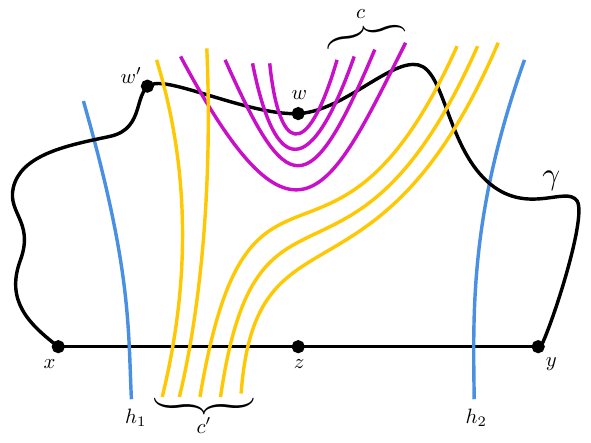}
\caption{The case in the proof of Proposition~\ref{prop:0-sep_quasitree} where many elements of $c$ separate $w$ from $h_1$ and $h_2$.} \label{fig:quasitree}
\end{figure}

The space $S_\D$ is \emph{a priori} much larger than $(S,\dist_\D)$. The following gives a criterion for this to not be the case.

\begin{proposition}[{\cite[Prop.~5.8]{petytzalloum:constructing}}] \label{prop:density}
Let $\D$ be an $L$--separated, $m$--gluable system of chains. If $(S,\dist_\D)$ is $k$--weakly roughly geodesic, then $S$ is $(3k+4(L+m+1))$--coarsely dense in $S_\D$.
\end{proposition}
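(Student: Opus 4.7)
The plan is to argue by contradiction, combining the weakly roughly geodesic property with the gluability/separation structure of $\D$ to rule out a large distance from $\phi$ to $S$.

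Pick any $\phi \in S_\D$ and choose $s \in S$ minimising $D = \dist_\D(\phi_s, \phi)$; such an $s$ exists because $\dist_\D$ is integer valued and finite on pairs from $S \cup \{\phi\}$ since $\phi \in S_\D$. Suppose for contradiction that $D > 3k + 4(L+m+1)$. Let $c = (h_1, \ldots, h_D) \in \D$ realise $D$, with halfspaces oriented so that $\phi_s \in h_i^+$ and $\phi \in h_i^-$ for all $i$, and with nesting $h_1^+ \subsetneq h_2^+ \subsetneq \ldots \subsetneq h_D^+$. Since $h_D$ is a wall of $S$, the halfspace $h_D^- \cap S$ is nonempty; pick $s' \in h_D^- \cap S$. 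The nesting then forces $s' \in h_i^-$ for every $i$, so $c$ separates $s$ from $s'$ in $S$ (whence $\dist_\D(s, s') \ge D$) and $\phi_{s'}$ agrees with $\phi$ on every wall of $c$.

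Now apply the $k$--weakly roughly geodesic property to $s, s'$ with $r$ near $D/2$ to obtain $z \in S$ satisfying $\dist_\D(s, z) \ge r - k$, $\dist_\D(z, s') \ge \dist_\D(s, s') - r - k$, and the near-triangle bound $\dist_\D(s, z) + \dist_\D(z, s') \le \dist_\D(s, s') + k$. The nesting of $c$ forces an index $i_0$ with $z \in h_j^-$ for $j < i_0$ and $z \in h_j^+$ for $j \ge i_0$. Hence the subchain $(h_1, \ldots, h_{i_0 - 1})$ of $c$ lies in $\D$ and separates $s$ from $z$, while $(h_{i_0}, \ldots, h_D)$ lies in $\D$ and separates $\phi_z$ from both $\phi$ and $\phi_{s'}$.

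Since $z \in S$, the minimality of $s$ forces $\dist_\D(\phi_z, \phi) \ge D$. Pick a chain $c' \in \D$ of length $\ge D$ realising this. Its intersection with $c$ is contained in the subchain separating $\phi_z$ from $\phi$, so $|c' \cap c| \le D - i_0 + 1$ and at least $i_0 - 1$ walls of $c'$ are \emph{new}. Split the new walls by whether $\phi_s$ agrees with $\phi_z$ on them (type 1, so that $h$ separates $s$ from $\phi$) or with $\phi$ (type 2, so that $h$ separates $s$ from $z$). If type 1 walls predominate, splice enough of them into $c$ via $m$--gluability, using $L$--separation to bound how many can simultaneously cross any pair of walls of $c$, so as to produce a chain in $\D$ separating $\phi_s$ from $\phi$ of length exceeding $D$, contradicting $D = \dist_\D(\phi_s, \phi)$. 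If type 2 walls predominate, a symmetric splicing onto $(h_1, \ldots, h_{i_0-1})$ builds a chain in $\D$ separating $s$ from $z$ of length exceeding $\dist_\D(s, z) + \dist_\D(z, s')$, contradicting the near-triangle bound above. Either case forces $D \le 3k + 4(L+m+1)$, giving the required coarse density.

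The main obstacle is the careful bookkeeping of additive slacks: the $k$ from each weakly-roughly-geodesic estimate, the $m$ from each splice via gluability, the $L$ from each separation application, and occasional $+1$ parity corrections. Arranging these tallies precisely so the threshold reads $3k + 4(L+m+1)$ rather than some larger constant, and ensuring that the choice of $r$ near $D/2$ makes both cases of the dichotomy amenable to the splicing argument, is the delicate part.
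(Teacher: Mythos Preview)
The paper does not prove this proposition; it is quoted from \cite[Prop.~5.8]{petytzalloum:constructing} without argument. So there is no proof here to compare against, and your attempt must stand on its own.

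Your overall strategy---take $s\in S$ closest to $\phi$, pick $s'$ deep in the chain realising $\dist_\D(s,\phi)$, use the weakly roughly geodesic hypothesis to find an intermediate $z\in S$, then derive a contradiction from $\dist_\D(z,\phi)\ge D$---is reasonable and is indeed the shape of the argument in \cite{petytzalloum:constructing}. However, what you have written is a plan rather than a proof, and the plan has a real problem in the dichotomy step.

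In your ``type~2'' case you claim that splicing type~2 walls onto $(h_1,\dots,h_{i_0-1})$ yields a chain in $\D$ separating $s$ from $z$ of length exceeding $\dist_\D(s,z)+\dist_\D(z,s')$, and that this contradicts the near-triangle bound. But any chain separating $s$ from $z$ has length at most $\dist_\D(s,z)$ by definition, so if your spliced chain really had that length you would already have a contradiction without invoking the near-triangle inequality at all; conversely, there is no mechanism in your sketch that forces the spliced chain to be that long. The type~2 walls separate $z$ from $\{s,\phi\}$, and $(h_1,\dots,h_{i_0-1})$ also separates $s$ from $z$, but you have given no reason why their union is a chain, nor why gluability and separation suffice to assemble them into something of the claimed length. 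The same objection applies to the type~1 case: you assert that one can ``splice enough of them into $c$'' to exceed $D$, but the type~1 walls and the walls of $c$ both separate $s$ from $\phi$ and may interleave arbitrarily; $L$--separation only controls how many elements of one $\D$--chain cross a fixed pair from another, which is not by itself enough to build a longer $\D$--chain.

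What is missing is precisely the content of \cite[Lem.~5.3]{petytzalloum:constructing} (used elsewhere in this paper, in the proof of Proposition~\ref{prop:dual_SC}): given $c_1\in\D$ separating $a$ from $b$ and $c_2\in\D$ separating $b$ from $c$, one can extract $c'\subset c_1\cup c_2$ with $c'\in\D$ separating $a$ from $c$ and $|c'|\ge|c_1|+|c_2|-(L+m+1)$. With that lemma in hand, the cleaner route is to take $r=D$ (not $D/2$), obtain $z$ with $\dist_\D(s,z)\ge D-k$ and $\dist_\D(z,\phi)\ge D$ by minimality, and then splice a chain realising $\dist_\D(s,z)$ with one realising $\dist_\D(z,\phi)$ to get a $\D$--chain from $s$ to $\phi$ of length at least $2D-k-(L+m+1)$, forcing $D\le k+L+m+1$ up to the additional losses that produce the stated constant. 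Your sketch gestures at this but does not carry it out, and the type~1/type~2 dichotomy as written does not substitute for it.
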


\subsection{Large-scale geometry of mapping class groups} \label{subsec:mcg}

Here we discuss some aspects of the geometry of mapping class groups. Their primary use will be to prove Proposition~\ref{prop:curve_graph}, so the reader who is prepared to take that on faith can skip this section. They are all fairly standard from the perspective of \emph{hierarchical hyperbolicity}, so we refer the reader to \cite{behrstockhagensisto:hierarchically:2} or \cite{casalsruizhagenkazachkov:real} for broader and more detailed treatments.

Let $\Sigma$ be a finite-type surface, with curve graph $\C\Sigma$ and mapping class group $\MCG\Sigma$. The graph metric on the hyperbolic space $\C\Sigma$ is denoted $\dist_\Sigma$. There is an equivariant, coarsely Lipschitz map $\pi:\MCG\Sigma\to\C\Sigma$. More generally, if $U$ is an essential, non-pants subsurface of $\Sigma$, then there is a coarsely Lipschitz map $\pi_U:\MCG\Sigma\to\C U$. Let $\s$ be the set of isotopy classes of such subsurfaces.

The mapping class group $\MCG\Sigma$ is also a coarse median space \cite{bowditch:coarse}, with coarse median given by the ``centroid'' construction of Behrstock--Minsky \cite{behrstockminsky:centroids}. More precisely, given $x_1,x_2,x_3\in\MCG\Sigma$, the coarse median $\mu(x_1,x_2,x_3)$ is a mapping class such that for every $U\in\s$ we have that $\pi_U\mu(x_1,x_2,x_3)$ lies uniformly close to the coarse median in $\C U$ of the $\pi_Ux_i$. In particular, the maps $\pi_U$ are all uniformly quasimedian.

There is also a notion of \emph{rank} for coarse median spaces \cite{bowditch:coarse}. Unbounded hyperbolic spaces have rank 1, and the direct product of $m$ unbounded hyperbolic spaces has rank $m$. For groups that are coarse median spaces, the rank is equal to the maximal dimension of a quasiflat \cite[Prop.~3.1]{munropetyt:obstructions}, which for $\MCG\Sigma$ is the complexity $\xi$ of $\Sigma$. This goes into the definition of \emph{hulls} of subsets \cite[\S6]{bowditch:convex}.

\begin{definition}[Median-quasiconvex; hull]
A subset $A\subset\MCG\Sigma$ is \emph{$\delta$--median-quasiconvex} if, for any $a,b\in A$ and any $x\in\MCG\Sigma$, the point $\mu(a,b,x)$ is $\delta$--close to $A$.

Given $B\subset\MCG\Sigma$, let $\cal J(B)=\{\mu(b,b',x)\,:\,b,b'\in B,\,x\in\MCG\Sigma\}$. Iterating, we define $\hull B=\cal J^\xi(B)$, which is uniformly median-quasiconvex \cite[Lem.~6.1]{bowditch:convex}.
\end{definition}

If $A\subset\MCG\Sigma$ is median-quasiconvex, then $\pi_U(A)$ must be quasiconvex for every $U\in\s$. This gives a \emph{coarse gate} map $\p_A:\MCG\Sigma\to A$, similarly to the construction of the coarse median: given $x\in\MCG\Sigma$, the point $\p_A(x)$ is such that $\pi_U\p_A(x)$ is uniformly close to the closest-point projection to $\pi_U(A)$ of $\pi_U(x)$, for all $U$.

One natural family of median-quasiconvex subsets is given by curve stabilisers. Given a curve $\alpha$ on $\Sigma$, the stabiliser of $\alpha$ is quasimedian quasiisometric to a product $\mathbf P_\alpha$, with the two factors corresponding to $\alpha$ and its complementary subsurface. See \cite[\S5]{behrstockhagensisto:hierarchically:2} or \cite[\S17]{casalsruizhagenkazachkov:real} for more detail.

\begin{remark} \label{rem:E}
There are various constants involved in the above discussion, but we fix a sufficiently large value of $E$ compared to them; see \cite[Ren.~1.6]{behrstockhagensisto:hierarchically:2}. Thus $\C\Sigma$ is $E$--hyperbolic, $\pi:\MCG\Sigma\to\C\Sigma$ is $E$--quasimedian and $E$--coarsely Lipschitz, and hulls are $E$--median-quasiconvex.
\end{remark}

\section{Dualising quasitrees} \label{sec:QT}

The goal of this section is to replace a given quasitree by one with better median properties.

Let $(T,\dist)$ be a $\delta$--hyperbolic quasitree. Choose $K>100\delta$ large enough so that every ball $B$ of radius $K$ disconnects $T$. Let $\{C_i^B\}_{i \in I}$ be the components of $T\ssm B$. Each $C^B_i$ defines two natural bipartitions of $T$, namely $(C^B_i,\, T\ssm C^B_i)$ and $(C^B_i\cup B,\, T\ssm(C^B_i\cup B))$. Let $W$ be the collection of all such bipartitions of $T$ over all balls of radius $K$ and all components of their complements.

We say that a set of balls is \emph{disparate} if for each pair the distance between their centres is at least $10K$. Let $c=\{h_1,\dots,h_n\}$ be a chain of walls, with $h_i$ defined by some $K$--ball $B_i$. If $\{B_i\}$ is disparate, then we say that $c$ is disparate. We define
\[
\D \,=\, \{\text{disparate chains of walls in }W\}.
\]

\begin{lemma} \label{lem:disparate_properties}
$\D$ is a 1--gluable, 0--separated, dualisable system of chains on $(T,W)$. Moreover, $(T, \dist_{\D})$ is quasiisometric to $(T,\dist)$.
\end{lemma}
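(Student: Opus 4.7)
The plan is to verify the four assertions in turn, all hinging on the quasitree bottleneck property for balls of radius $K>100\delta$.

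\emph{Dualisable system.} A subset of a disparate chain is again a disparate chain: the total nesting of halfspaces is inherited by subsets, and pairwise centre-distance $\ge 10K$ is obviously preserved. Singletons belong to $\D$ vacuously. For finiteness of $\dist_\D(\phi_s,\phi_t)$ on $T$, if $c=(h_1,\dots,h_n)\in\D$ separates $s$ from $t$, each ball $B_i$ must meet every rough geodesic from $s$ to $t$, so its centre lies within $K+O(\delta)$ of such a geodesic; pairwise disparity then forces $n\le\dist(s,t)/(10K)+O(1)$.

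\emph{Quasiisometry.} The inequality $\dist_\D(s,t)\le\dist(s,t)/(10K)+O(1)$ is immediate from the previous paragraph. For the reverse, I would pick points $q_0=s,q_1,\dots,q_N=t$ along a rough geodesic with consecutive separations in $[10K,11K]$, so $N\gtrsim\dist(s,t)/(11K)$. For each interior $q_i$ the ball $B(q_i,K)$ disconnects $T$, and the halfspace containing $t$ defines a wall in $W$. The balls lie along the geodesic in order, so the bottleneck property ensures the resulting walls form a chain; they are pairwise disparate by construction. This realises $\dist_\D(s,t)\ge N-O(1)$.

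\emph{0-separation.} The crux is the claim: if walls $k$ and $h$ come from balls $B(p,K)$ and $B(p',K)$ and cross, then $\dist(p,p')\le 2K+O(\delta)$. Assuming otherwise, the bottleneck property forces $B(p,K)$ to lie in a single component of $T\ssm B(p',K)$, after which inspection of the component structures of $T\ssm B(p,K)$ and $T\ssm B(p',K)$ shows that one halfspace of $k$ is contained in one halfspace of $h$, killing one of the four quarterspaces. Now a disparate pair $\{h_1,h_2\}\in\D$ has centres at distance $\ge 10K$, so any common crosser would have its ball's centre within $2K+O(\delta)$ of both $p_1$ and $p_2$, violating the triangle inequality. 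Hence any $c\in\D$ whose elements all cross both $h_1,h_2$ is empty.

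\emph{1-gluability.} This is the most delicate step. Given $c_1,c_2\in\D$ with $c_1\cup c_2$ a chain and $c_1\subset c_2^-$, order the walls as $h^1_1,\dots,h^1_a,h^2_1,\dots,h^2_b$ with centres $p^1_i,p^2_j$. Within each $c_i$ disparity is assumed. For across pairs, the chain condition lets me use $B^1_a$ and $B^2_1$ as consecutive bottlenecks: any path from $p^1_i$ to $p^2_j$ with $i\le a$, $j\ge 1$ enters both in order, yielding $\dist(p^1_i,p^2_j)\ge\dist(p^1_i,p^1_a)+\dist(B^1_a,B^2_1)+\dist(p^2_1,p^2_j)-O(K)$. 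Combined with within-$c_i$ disparity of $10K$, this exceeds $10K$ so long as $(i,j)\ne(a,1)$, using $K>100\delta$ to absorb additive losses. The only possible disparity failure is therefore the pair $(h^1_a,h^2_1)$, and removing one of these two walls restores a disparate chain, proving 1-gluability.
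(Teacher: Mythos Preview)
Your arguments for dualisability, the quasiisometry, and $0$--separation are correct and match the paper's (terse) proof; in fact your $0$--separation argument is a fleshed-out version of the paper's one-line justification.

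The $1$--gluability argument, however, has a genuine gap. Your inequality
\[
\dist(p^1_i,p^2_j)\ \ge\ \dist(p^1_i,p^1_a)+\dist(B^1_a,B^2_1)+\dist(p^2_1,p^2_j)-O(K)
\]
has an additive loss that is of order $K$, not of order $\delta$: the points where a path meets $B^1_a$ and $B^2_1$ can be as far as $K$ from the centres $p^1_a,p^2_1$, so the loss is at least $2K$, and the hypothesis $K>100\delta$ does nothing to absorb it. Consequently, for the boundary cases $(i,1)$ with $i<a$, or $(a,j)$ with $j>1$, your bound collapses to $\ge 10K-2K=8K$, which does not give disparity. So the claim that ``the only possible disparity failure is $(h^1_a,h^2_1)$'' is not established.

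This is not merely a bookkeeping issue. Take $T=\mathbf Z$, and let $c_1$ have defining balls centred at $0$ and $10K$, and $c_2$ have defining balls centred at $9K$ and $19K$; one can choose the wall types so that the halfspaces nest in the order $h^1_1,h^1_2,h^2_1,h^2_2$. Then the cross-pairs $(h^1_1,h^2_1)$, $(h^1_2,h^2_1)$, and $(h^1_2,h^2_2)$ all have centre-distance $<10K$, and removing any single wall still leaves a non-disparate pair with these fixed ball choices. The resolution is that the same bipartition can arise from more than one $K$--ball (in $\mathbf Z$, a wall splitting at $s$ comes from balls centred at roughly $s\pm K$), so disparateness should be checked over all choices of defining balls; exploiting this $\pm K$ flexibility is what makes the single removal sufficient. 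Your argument does not invoke this, and without it the step fails. (There is also a smaller unjustified step: you assert that any path from $p^1_i$ to $p^2_j$ meets $B^1_a$ and $B^2_1$, but you have not placed the centres on specific sides of those walls, so this needs an extra word.)
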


\begin{proof}
$\D$ contains all singletons and is closed under taking subsets. It is dualisable because any element of $\D$ separating $s\in S$ from $t\in S$ has cardinality at most $\lceil\frac1{10K}\dist(s,t)\rceil$. Conversely, given $s$ and $t$, if we take balls with centres on a geodesic from $s$ to $t$ at pairwise distance $20K$, then we see that $\dist_\D(s,t)\ge\frac{\dist(s,t)}{20K}-1$. The 1--gluability of $\D$ is clear, and 0--separation holds because the balls defining $W$ all have diameter at most $2K$.
\end{proof}

From the dualisable system $\D$ on $(T,W)$, we obtain a dual space $T_\D$ as described in Section~\ref{sec:preliminaries}. Already from Lemma~\ref{lem:disparate_properties} and Proposition~\ref{prop:dual_hyp} we have that $T_\D$ is a roughly geodesic hyperbolic space, but this is strengthened by the following, which in particular shows that $T_\D$ is a quasitree and the inclusion of $T$ into $T_\D$ is quasimedian.

\begin{lemma} \label{lem:quasitree_dense}
$(T, \dist_{\D})$ is 11--dense in its dual $T_\D$. Any group acting isometrically on $(T,\dist)$ acts isometrically on $T_\D$.
\end{lemma}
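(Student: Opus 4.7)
The approach is to deduce the density statement from Proposition~\ref{prop:density}. By Lemma~\ref{lem:disparate_properties} we have $L=0$ and $m=1$, so Proposition~\ref{prop:density} yields $(3k+8)$-density once we have shown that $(T,\dist_\D)$ is $k$-weakly roughly geodesic. To reach the target bound $11$, I would therefore establish this with $k=1$.

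To do so, fix $s,t\in T$ with $N:=\dist_\D(s,t)$ and $r\in[0,N]$, and pick a maximal disparate chain $(h_1,\dots,h_N)\in\D$ separating $s$ from $t$, with defining balls $B_i$. Using the quasitree structure of $T$---in particular the fact that any $K$-ball centred on a geodesic $\gamma$ from $s$ to $t$ is a bottleneck---I would arrange for the centres of the $B_i$ to lie on $\gamma$ in order from $s$ to $t$, and then pick $z\in\gamma$ in the component of $T\setminus(B_{\lfloor r\rfloor}\cup B_{\lfloor r\rfloor+1})$ lying between them. The sub-chains $(h_1,\dots,h_{\lfloor r\rfloor})$ and $(h_{\lfloor r\rfloor+1},\dots,h_N)$ then witness the lower bounds $\dist_\D(s,z)\geq r-1$ and $\dist_\D(z,t)\geq N-r-1$. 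For the matching upper bound, take $c_1^*,c_2^*\in\D$ realising $\dist_\D(s,z)$ and $\dist_\D(z,t)$; I would show that $c_1^*\cup c_2^*$ forms a chain separating $s$ from $t$, whence $1$-gluability produces an element of $\D$ of size at least $\dist_\D(s,z)+\dist_\D(z,t)-1$, giving $\dist_\D(s,z)+\dist_\D(z,t)\leq N+1$.

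The main obstacle is justifying that $c_1^*\cup c_2^*$ really is a chain separating $s$ from $t$, since a priori the balls realising the two distances need not sit coherently along a single geodesic and the resulting walls need not nest. The remedy is again the quasitree geometry in the form of Manning's bottleneck criterion: any $K$-ball separating $s$ from $z$ lies uniformly close to every $s$-$z$ geodesic, hence close to the sub-segment of $\gamma$ from $s$ to $z$; the analogous statement for $c_2^*$ forces the defining balls of the two realising chains to stack along $\gamma$, and the corresponding walls therefore nest linearly and jointly separate $s$ from $t$. Finally, the equivariance claim is immediate: an isometric action of a group $G$ on $(T,\dist)$ permutes $K$-balls and preserves pairwise distances between their centres, hence permutes $W$ while preserving disparateness of collections, and the induced permutation of ultrafilters in $T_\D$ visibly preserves the separation pairing and so is $\dist_\D$-isometric.
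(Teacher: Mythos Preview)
Your approach is essentially the same as the paper's: invoke Lemma~\ref{lem:disparate_properties} for $L=0$, $m=1$, establish that $(T,\dist_\D)$ is $1$--(weakly) roughly geodesic, and then apply Proposition~\ref{prop:density} to get $3\cdot 1+4(0+1+1)=11$; the equivariance argument is likewise the same. The paper simply asserts that $(T,\dist_\D)$ is $1$--roughly geodesic (which is slightly stronger than what you prove and makes the chain-concatenation step cleaner, since points on an original geodesic already serve as the intermediate points), whereas you supply a more detailed sketch of the weak version---your outline is sound, though the step of arranging the defining balls of a \emph{given} maximal chain to lie on $\gamma$ is not quite right as stated and should instead use that in a quasitree any $K$--ball giving a wall that separates $s$ from $z$ must lie near $\gamma$ on the $s$--side of $z$.
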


\begin{proof} 
It is easy to see that $(T,\dist_\D)$ is 1-roughly geodesic. Lemma~\ref{lem:disparate_properties} and Proposition~\ref{prop:density} together yield the density. Actions on $T$ pass to actions on $T_\D$ because $W$ is preserved by isometries.
\end{proof}

\section{Dualising subspaces of products} \label{sec:dualising_subspaces}

In this section, we build on the construction of Section~\ref{sec:QT} to show how to produce, for certain hyperbolic spaces, a thickening with improved median properties. The hyperbolic spaces we consider are singled out by the following.

\begin{definition}[Strong (QT)] \label{def:sqtr}
Let $X$ be a roughly geodesic coarse median space, and let $G$ be a group acting on $X$. The \emph{strong quasitree rank} of $(X,G)$ is the infimal value of $m$ such that there exists a $G$--equivariant, quasimedian, quasiisometric embedding of $X$ in a product of $m$ quasitrees. If $(X,G)$ has finite strong quasitree rank, then we say that it has \emph{strong (QT)}.
\end{definition}

When the group $G$ is understood we simply refer to the strong quasitree rank of $X$, or to $X$ having strong (QT). The adjective ``strong'' indicates that the map is quasimedian; for groups this is the difference between strong (QT) and the property (QT) introduced in \cite{bestvinabrombergfujiwara:proper}.

Throughout this section, we shall fix a roughly geodesic hyperbolic space $X$, acted on by a group $G$, with strong quasitree rank $m<\infty$. We fix a finite product $\prod_{i=1}^mT_i$ of quasitrees into which $X$ admits a $G$--equivariant, quasimedian, quasiisometric embedding. We consider the product as being equipped with the $\ell^1$ metric. 

For each $i$, let $W_i$ and $\D_i$ be as constructed in Section~\ref{sec:QT}, with corresponding dual spaces $T_{\D_i}$. By Lemma~\ref{lem:quasitree_dense}, $T_i$ is coarsely dense in $T_{\D_i}$ and we have an action of $G$ on $\prod_{i=1}^mT_{\D_i}$. Let $S$ denote the image of $X$ in $\prod_{i=1}^mT_{\D_i}$.

\begin{lemma} \label{lem:orbit_hyperbolic}
$S$ is a roughly geodesic hyperbolic space.
\end{lemma}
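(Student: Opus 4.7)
The plan is to show that $\phi$ descends to a quasiisometry $\phi':X\to S$, after which rough geodesicity and hyperbolicity of $S$ follow by standard transfer along $\phi'$.

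First I would unpack the ingredients. By the strong (QT) hypothesis fixed at the start of the section, $\phi:X\to\prod_{i=1}^mT_i$ is a $G$--equivariant, quasimedian, quasiisometric embedding, where the target carries the $\ell^1$ metric. Lemma~\ref{lem:disparate_properties} says that the identity on $T_i$ is a quasiisometry from $(T_i,\dist)$ to $(T_i,\dist_{\D_i})$, and Lemma~\ref{lem:quasitree_dense} says that $(T_i,\dist_{\D_i})$ is $11$--dense in $T_{\D_i}$. So the inclusion $(T_i,\dist)\hookrightarrow T_{\D_i}$ is a quasiisometric embedding onto a coarsely dense subset, and hence a quasiisometry onto its image. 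Since we are using $\ell^1$ on both products, taking the coordinatewise product preserves this, so $\prod(T_i,\dist)\to\prod T_{\D_i}$ is itself a quasiisometric embedding with coarsely dense image. Composing with $\phi$ and restricting the codomain to the image $S$ then produces a quasiisometry $\phi':X\to S$.

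Finally I would transfer the properties. Since $X$ is roughly geodesic and hyperbolic by hypothesis and $\phi'$ is a quasiisometry, these pass to $S$: for $s=\phi'(x),\,s'=\phi'(x')$ in $S$, sampling along a rough geodesic in $X$ from $x$ to $x'$ and pushing forward by $\phi'$ yields a chain of points in $S$ with uniformly bounded consecutive spacings whose cumulative distances are additively close to $\dist_S(s,s')$, furnishing rough geodesicity in the sense used in this paper; hyperbolicity then follows from the standard fact that the four-point condition defining $\delta$--hyperbolicity is preserved by quasiisometries of roughly geodesic spaces, with the constant adjusted in terms of $\delta$ and the quasiisometry constants of $\phi'$. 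The main (minor) obstacle is simply checking that the paper's notion of rough geodesic accommodates the discrete chains produced by this transport, since $S$ need not be path-connected as a subspace of $\prod T_{\D_i}$; this is the same flexibility already used for $T_\D$ in Section~\ref{sec:QT} and causes no real trouble.
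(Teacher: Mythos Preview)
Your reduction to showing that $\phi':X\to S$ is a quasiisometry is fine, and hyperbolicity in the four-point sense does indeed transfer along $\phi'$. The gap is in the rough-geodesicity step. Pushing a $(1,k)$--rough geodesic through a $(\lambda,\eps)$--quasiisometry only yields a $(\lambda,\lambda k+\eps)$--quasigeodesic, and no reparametrisation removes the multiplicative constant $\lambda$. Your claim that the pushed-forward chain has ``cumulative distances additively close to $\dist_S(s,s')$'' is precisely where this fails: the additive error becomes multiplicative once $\lambda>1$, and here $\lambda>1$ in general, since both the embedding $X\to\prod T_i$ and the comparison $(T_i,\dist)\to(T_i,\dist_{\D_i})$ from Lemma~\ref{lem:disparate_properties} carry nontrivial multiplicative constants. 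Being roughly geodesic is not a quasiisometry invariant, which is exactly why the paper's own proof does not stop at ``$X\to S$ is a quasiisometry'' but instead says it then \emph{suffices} to show $S$ is roughly geodesic and proceeds to do so by a separate argument. The same issue recurs later in Proposition~\ref{prop:dual_SC}, where weak rough geodesicity of $(S,\dist_\C)$ again has to be checked directly despite the quasiisometry to $X$.

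What you are missing is the quasimedian hypothesis, which is the genuine content here. The paper takes the image $\gamma\subset S$ of a rough geodesic in $X$ and observes that, because the embedding is quasimedian, each coordinate projection $\pi_i\gamma$ is a quasimedian path in the quasitree $T_{\D_i}$; quasimedian paths in hyperbolic spaces (in particular quasitrees) are unparametrised rough geodesics, and a path whose projection to every $\ell^1$ factor is an unparametrised rough geodesic is itself an unparametrised rough geodesic in the product. That factorwise argument is what upgrades the a priori quasigeodesic $\gamma$ to a rough geodesic, and it cannot be replaced by a bare quasiisometry transfer.
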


\begin{proof}
Since $X\to S$ is a quasiisometry and $X$ is hyperbolic, it suffices to show that $S$ is roughly geodesic. Given $s,t\in S$, let $\gamma$ be the image in $S$ of a rough geodesic in $X$ with endpoints mapping to $s$ and $t$. Let $\pi_i$ denote the projection map $\prod_{j=1}^mT_{\D_j}\to T_{\D_i}$. By the quasimedian assumption, for each $i$ the path $\pi_i\gamma$ is a quasimedian path. Since $T_{\D_i}$ is a quasitree by Lemma~\ref{lem:quasitree_dense}, this means that $\pi_i\gamma$ is in fact an unparametrised rough geodesic. As this holds for every $i$, it follows that $\gamma$ is an unparametrised rough geodesic.    
\end{proof}

\ubsh{Walls}
The elements of $W_i$ induce walls on $S$ by considering the projection to $T_{\D_i}$. Let $W$ be the set of such walls on $S$, with the crossing relation on $W$ defined so that two walls in $W$ cross exactly when $S$ meets all four quarterspaces. 

To be clear, two walls in $W$ can only cross if the corresponding bipartitions of $\prod_{i=1}^mT_i$ cross; i.e. they either come from distinct factors or from two nearby balls in the same factor. However, that is \emph{not} a sufficient condition for two walls to cross in $W$. Indeed, there will in general be walls coming from distinct $W_i$ that do not cross in $W$, because $S$ does not meet all four quarterspaces.
\uesh

\begin{definition}[Grid]
A \emph{grid} in $W$ is the data of two chains $d_1 \in \D_i, d_2 \in \D_j$, possibly with $i=j$, such that, when viewed as elements of $W$, each member of $d_1$ crosses each member of $d_2$. That is, they cross in $S$.
\end{definition}

The next lemma shows that $W$ cannot have large square grids.

\begin{lemma} \label{lem:no_grids}
There exists $L$ such that if $d_1,d_2$ form a grid in $W$, then $\min\{|d_1|, |d_2|\} \leq L$.
\end{lemma}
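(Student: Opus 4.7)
I would split into cases according to whether $i=j$ or $i\neq j$.

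Case $i=j$: the key ingredient is that $\D_i$ is $0$-separated by Lemma~\ref{lem:disparate_properties}. The subtlety is that one must check that if two walls $h_1,h_2\in W_i$ cross in $W$ (as bipartitions of $S$), they also cross in $W_i$ (as bipartitions of $T_i$). For this, if $h_1^+\cap h_2^+=\varnothing$ in $T_i$ then $h_1^+\subset h_2^-$, and the filter condition of Definition~\ref{def:filter} forces every ultrafilter $\phi$ in $T_{\D_i}$ with $\phi(h_1)=h_1^+$ to satisfy $\phi(h_2)=h_2^-$; pushing forward via $\pi_i$ shows $S$ misses the $(+,+)$-quarterspace in $W$. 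Running this for each of the four quarters reduces crossing in $W$ to crossing in $W_i$. Now if $|d_1|\geq 2$ and $|d_2|\geq 1$, any two walls of $d_1$ form a disparate pair in $\D_i$ while any single wall of $d_2$ is a chain in $\D_i$ of size $1$ crossing both, contradicting $0$-separation. Hence $\min\{|d_1|,|d_2|\}\leq 1$ in this case.

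Case $i\neq j$: I would exploit hyperbolicity of $S$ (Lemma~\ref{lem:orbit_hyperbolic}) together with the quasimedian structure. Orient $d_1=(h_1,\dots,h_p)$ and $d_2=(k_1,\dots,k_q)$ so that $h_a^+\subsetneq h_{a+1}^+$ and $k_b^+\subsetneq k_{b+1}^+$, and use the grid hypothesis to pick corner points $s_{00}\in h_1^-\cap k_1^-$, $s_{p0}\in h_p^+\cap k_1^-$, $s_{0q}\in h_1^-\cap k_q^+$, $s_{pq}\in h_p^+\cap k_q^+$ in $S$, along with the three coarse medians
\[
m_1=\mu_S(s_{00},s_{p0},s_{0q}),\quad m_2=\mu_S(s_{00},s_{p0},s_{pq}),\quad m_3=\mu_S(s_{00},s_{0q},s_{pq}).
\]
The composition $S\hookrightarrow\prod T_{\D_k}$ is quasimedian: the QI $X\to S$ of hyperbolic spaces (Lemma~\ref{lem:orbit_hyperbolic}) is quasimedian by Lemma~\ref{lem:hyp_qi_is_qm}; $X\hookrightarrow\prod T_i$ is quasimedian by hypothesis; and each inclusion $T_i\hookrightarrow T_{\D_i}$ is a QI of hyperbolic quasitrees by Lemma~\ref{lem:quasitree_dense}, hence also quasimedian. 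Therefore each $\pi_\ell m_n$ lies within a uniform constant of the coordinate-wise median in $T_{\D_\ell}$, which sits on the majority side of every wall. A sign count places $m_1, m_2, m_3$ in the $(-,-)$, $(+,-)$, $(-,+)$ quarters of $(d_1,d_2)$ respectively, so Lemma~\ref{lem:disparate_properties} and the $\ell^1$ metric on the product yield $\dist_S(m_1,m_2)\gtrsim p$, $\dist_S(m_1,m_3)\gtrsim q$, and $\dist_S(m_2,m_3)\gtrsim p+q$ up to additive constants.

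Finally, the $4$-point condition at $s_{00}$ forces two of the Gromov products $P=(s_{p0}|s_{0q})_{s_{00}}$, $Q=(s_{p0}|s_{pq})_{s_{00}}$, $R=(s_{0q}|s_{pq})_{s_{00}}$ to agree up to $\delta_S$. If $P\approx Q$, then $m_1$ and $m_2$ both lie $O(\delta_S)$-close to the geodesic $[s_{00},s_{p0}]$ at nearly equal distance $\approx P$ from $s_{00}$, so $\dist_S(m_1,m_2)=O(\delta_S)$, bounding $p$; if $P\approx R$, the analogous argument along $[s_{00},s_{0q}]$ bounds $q$; if $Q\approx R$, the same along $[s_{00},s_{pq}]$ bounds $p+q$. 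In every case, $\min\{|d_1|,|d_2|\}\leq L$ for $L$ depending only on $\delta_S$ and the quasimedian constants, which is what we want. The main obstacle is the three-way bookkeeping: one must check in each alternative that the two close-in-$S$ medians really sit in grid regions whose projection-based separation grows with $p$, $q$, or $p+q$, so that the upper and lower bounds genuinely conflict. Fortunately this is immediate from the sign count above.
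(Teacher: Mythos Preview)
Your proof is essentially correct, though there is a small notational slip in the $i\ne j$ case: with your orientation $h_a^+\subsetneq h_{a+1}^+$, the halfspace $h_1^-$ is the \emph{largest} negative halfspace, so $s_{00}\in h_1^-\cap k_1^-$ does not force $s_{00}$ to lie on the negative side of every $h_a$ and $k_b$, and the sign count for the medians breaks down. You want the extreme corners, so either reverse the nesting to $h_a^+\supsetneq h_{a+1}^+$ (the paper's implicit convention) or instead take $s_{00}\in h_p^-\cap k_q^-$, $s_{p0}\in h_1^+\cap k_q^-$, and so on. With that fix the sign count and the $4$-point argument go through as you describe.

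Your route differs from the paper's in two respects. First, you split into cases $i=j$ and $i\ne j$, handling the former directly via $0$-separation of $\D_i$; this gives the sharper bound $\min\{|d_1|,|d_2|\}\le 1$ there, though the paper does not need it. Second, for $i\ne j$ you argue via three coarse medians and the $4$-point inequality, whereas the paper gives a shorter uniform argument covering both cases at once: take the four corner points, connect consecutive corners by rough geodesics $\gamma_i$ in $S$, and observe that since each $\pi_\ell\gamma_i$ is an unparametrised rough geodesic in the quasitree $T_{\D_\ell}$, the side $\gamma_i$ stays in a uniform neighbourhood of the halfspace containing both its endpoints. The resulting rough-geodesic quadrilateral has opposite sides at distance at least roughly $\min\{|d_1|,|d_2|\}$, which hyperbolicity of $S$ bounds. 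The paper's argument is more economical and avoids both the case split and the quasimedian bookkeeping; your approach, once the orientation is fixed, is a valid alternative that makes the role of the coarse median structure more explicit.
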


\begin{proof} 
Suppose that $d_1=\{h_1, \dots, h_m\}$ and $d_2=\{k_1, \dots, k_n\}$ form a grid in $W$. Let $s_1,s_2,s_3,s_4\in S$ be points in $h_1^-\cap k_1^-,\, h_m^+\cap k_1^-,\, h_m^+ \cap k_n^+$, and $h_1^- \cap k_n^+$, respectively, and let $\gamma_i$ be a rough geodesic in $S$ from $s_i$ to $s_{i+1}$. Since the projection of $\gamma_i$ to each $T_{\D_i}$ is an unparametrised rough geodesic, we find that $\gamma_1$ lies in a uniform neighbourhood of $k_1^-$, and similarly for the other $\gamma_i$. The $\gamma_i$ therefore form a rough geodesic quadrilateral whose thickness is lower bounded in terms of $\min\{|d_1|,|d_2|\}$. The existence of $L$ as in the statement follows from the fact that $S$ is hyperbolic.
\end{proof}



We next use the collection $W$ of walls on $S$ to define a dualisable system $\C$ that will be used to obtain our thickening of the space $X$. 

\ubsh{$L$--chains and the dualisable system $\C$}
First define 
\[
\D^1 \,=\, \big\{\bigcup_{i=1}^m d_i \,:\, d_i \in \D_i\big\} \,\subset\, 2^W.
\]
Note that, by definition, $\varnothing\in\D_i$ for each $i$, so an element of $\D^1$ may be comprised only of walls from a subset of the $T_i$. Observe that $\dist_{\D^1}$ is exactly the $\ell^1$ metric on $\prod_{i=1}^mT_{\D_i}$. However, we are considering elements of $\D^1$ with the crossing relation of $W$. Now set
\[
\D \,=\, \{d\in\D^1\,:\,d\text{ is a chain}\}.
\]
In particular, for each $i$ we have $\D_i\subset\D$.

Let $L$ be as in Lemma~\ref{lem:no_grids}. We say that an element $d\in\D$ is an \emph{$L$--chain} if for any $h,k\in d$, any $d'\in\D$ whose elements all cross both $h$ and $k$ has $|d'|\le L$. Define
\[
\C \,=\, \{d\in\D\,:\,d\text{ is an }L\text{--chain}\}.
\]
\uesh

\begin{lemma} \label{lem:C_is_gluable}
$\C$ and $\D$ are $m$--gluable dualisable systems, and $\C$ is $L$--separated.
\end{lemma}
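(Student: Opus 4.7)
The plan is to verify each claim, leveraging the 1-gluability and 0-separation of each factor-system $\D_i$ from Lemma~\ref{lem:disparate_properties}. The routine dualisability conditions (closure under subsets, containment of all singletons, and finiteness of $\dist_\C$ and $\dist_\D$ on $S$) follow quickly from the analogous properties of the $\D_i$ together with the comparison $\dist_\C\le\dist_\D\le\sum_i\dist_{\D_i}$. The $L$-separation of $\C$ is then immediate: any $\{h_1,h_2\}\in\C$ is by definition an $L$-chain, which says precisely that every $c\in\D$ whose walls all cross both $h_1,h_2$ has $|c|\leq L$; this applies in particular to $c\in\C\subset\D$.

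For $m$-gluability of $\D$, suppose $c_1,c_2\in\D$ with $c_1\cup c_2$ a chain and $c_1\subset c_2^-$, and decompose $c_j=\bigcup_i c_j^i$ with $c_j^i\in\D_i$. In each factor $i$, the concatenation $c_1^i\cup c_2^i$ inherits the chain structure in $W_i$ by projection, so the 1-gluability of $\D_i$ produces $f_i\subset c_1^i\cup c_2^i$ with $|f_i|\leq1$ and $(c_1^i\cup c_2^i)\setminus f_i\in\D_i$. Setting $f=\bigcup_i f_i$ gives $|f|\leq m$, and $(c_1\cup c_2)\setminus f$ belongs to $\D$, since it is a sub-chain of the chain $c_1\cup c_2$ in $W$ whose per-factor decomposition lies in each $\D_i$.

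The interesting step is $m$-gluability of $\C$. Using (essentially) the same $f$, I would check that $(c_1\cup c_2)\setminus f$ is an $L$-chain. Pairs $h,k$ both in $c_1\setminus f$ or both in $c_2\setminus f$ are handled directly by $c_1,c_2\in\C$ being $L$-chains. For a mixed pair $h\in c_1\setminus f$, $k\in c_2\setminus f$, the chain structure of $c_1\cup c_2$ and the nesting of halfspaces force any $d'\in\D$ whose walls all cross $h$ and $k$ to also cross every wall of $c_1\cup c_2$ strictly between them. Hence, if any wall of $c_1$ (whether or not in $f$) lies strictly between $h$ and $k$ in the chain, the $L$-chain property of $c_1$ applied to that wall together with $h$ gives $|d'|\leq L$; symmetrically for $c_2$. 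The main obstacle is the residual case in which $h$ is the very last wall of $c_1$ and $k$ the very first wall of $c_2$ in the global chain order, with neither in $f$. I expect this to be dealt with by refining $f$ so as to absorb one of these two boundary walls: the 1-gluability of $\D_i$ affords some freedom (for example, whenever $c_1^i\cup c_2^i$ already lies in $\D_i$, the slot $f_i$ may be chosen to contain any single wall of $c_1^i\cup c_2^i$), and this flexibility can be used to include the global boundary wall of $c_1$ or of $c_2$ within the $m$-budget. Once such a boundary wall is in $f$, it lies strictly between the new boundary pair of $(c_1\cup c_2)\setminus f$, and the case reduces to the already-handled nesting argument.
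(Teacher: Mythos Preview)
Your proposal is correct and follows essentially the same line as the paper's proof. The paper handles the boundary case a bit more directly: rather than refining $f$ afterwards, it arranges from the outset that one of the two interface walls $h_p,\,h'_1$ is among the (at most) $m$ removed elements, after which your nesting argument applies verbatim.
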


\begin{proof}
$\C$ and $\D$ contain all singletons and are closed under taking subsets. Since they are subsets of $\D^1$, the fact that $\dist_{\D^1}$ agrees with the $\ell^1$ metric on $\prod_{i=1}^mT_{\D_i}$ shows that $\C$ and $\D$ are dualisable systems. The subset $\C\subset\D$ consists of all $L$--chains, so in particular $\C$ is an $L$--separated dualisable system.

It remains to show that $\C$ and $\D$ are gluable. Let $c_1=\{h_1,\dots,h_p\}$ and $c_2=\{h'_1,\dots,h'_q\}$ be elements of $\D$ such that $c=c_1\cup c_2$ is a chain, with $c_1\subset c_2^-$ and $c_2\subset c_1^+$. We can write $c_1=\bigcup_{i=1}^md_i$ and $c_2=\bigcup_{i=1}^md'_i$ for some $d_i,d'_i\in\D_i$. Since each $\D_i$ is 1--gluable, we can remove a single element from each $d_i\cup d'_i$ to obtain a subset $c'\subset c$ with $c'\in\D^1$. In particular, at least one of $h_p$ and $h'_1$ does not lie in $c'$. Since $c$ is a chain, we moreover have that $c'\in\D$. This shows that $\D$ is $m$--gluable.

Now suppose $c_1,c_2\in\C$. Since at least one of $h_p$ and $h'_1$ is not in $c'$, any $d\in\D$ that crosses two elements of $c'$ must either: cross two elements of some $c'_1$; or cross $h_{p-1}$ and $h'_1$; or cross $h_p$ and $h'_2$. Since $c$ is a chain, in either of the latter two cases we have that $d$ crosses two elements of some $c_i$. Since each $c_i$ is an $L$--chain, this shows that $|d|\le L$, so $c'\in\C$. Thus $\C$ is also $m$--gluable.
\end{proof}

The thickening of $X$ that we shall consider will be the dual space $S_\C$.

\begin{proposition} \label{prop:dual_SC}
$S_\C$ is a $3m$--roughly geodesic hyperbolic space that is $G$--equivariantly quasiisometric to $X$.
\end{proposition}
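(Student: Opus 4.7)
The plan is to combine Lemma~\ref{lem:C_is_gluable} with the general dualisability machinery of Section~\ref{subsec:duals}: since $\C$ is an $L$--separated, $m$--gluable dualisable system of chains, Proposition~\ref{prop:dual_hyp} immediately yields that $S_\C$ is a roughly geodesic hyperbolic space, and the specific constant $3m$ comes from the $m$--gluability through the median path construction of \cite[Prop.~4.6]{petytzalloum:constructing}. For $G$--equivariance, each $W_i$ is $G$--invariant by Lemma~\ref{lem:quasitree_dense} and the embedding $X\hookrightarrow\prod T_i$ is $G$--equivariant, so $W$, $\D$, and $\C$ are all $G$--invariant, and $G$ therefore acts on $S_\C$ by isometries.

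For the quasiisometry to $X$, I would invoke Proposition~\ref{prop:density}, which reduces the task to showing that $(S,\dist_\C)$ is weakly roughly geodesic and quasiisometric to $X$ via the inclusion. The upper bound is free: $\dist_\C\le\dist_\D\le\dist_{\D^1}$, and $\dist_{\D^1}$ is by construction the restriction of the $\ell^1$ metric on $\prod T_{\D_i}$, which is bi-Lipschitz equivalent to $\dist_X$ by the strong (QT) hypothesis.

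The substantive content is the matching lower bound, obtained by a dominant-factor argument. Given $s,t\in S$, take a $\delta$--rough geodesic $\gamma$ in $S$ from $s$ to $t$; by (the proof of) Lemma~\ref{lem:orbit_hyperbolic}, each projection $\pi_i\gamma$ is an unparametrised rough geodesic in the quasitree $T_{\D_i}$, so $\sum_i\dist_{T_{\D_i}}(\pi_is,\pi_it)\asymp\dist_X(s,t)$ and pigeonhole produces an index $i^*$ with $\dist_{T_{\D_{i^*}}}(\pi_{i^*}s,\pi_{i^*}t)\gtrsim\dist_X(s,t)/m$. As in Lemma~\ref{lem:disparate_properties}, I build a disparate chain $c\in\D_{i^*}$ of balls along $\pi_{i^*}\gamma$ with $|c|$ of the same order, so that $c$ separates $s$ from $t$ in $S$. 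To conclude $c\in\C$, I verify the $L$--chain condition: for $h,k\in c\subset W_{i^*}$ and any $d'\in\D$ with all elements crossing both, decompose $d'=\bigcup_j d'_j$ with $d'_j\in\D_j$; each pair $(\{h,k\},d'_j)$ forms a grid in $W$, so Lemma~\ref{lem:no_grids} bounds $|d'_j|$ uniformly, and summing over the $m$ factors bounds $|d'|$. This yields $\dist_\C(s,t)\ge|c|-1\gtrsim\dist_X(s,t)/m$.

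The principal obstacle is precisely this $L$--chain verification, requiring Lemma~\ref{lem:no_grids} to be applied factor by factor and the constant $L$ to be chosen large enough to absorb the factor of $m$ arising from the decomposition. Once the inclusion $X\to(S,\dist_\C)$ is established as a quasiisometry, the weakly roughly geodesic property for $(S,\dist_\C)$ transports from the rough geodesicity of $X$ by choosing intermediate points on rough geodesics in $S$ and applying the quasiisometry constants. Proposition~\ref{prop:density} then supplies the coarse density of $S$ in $S_\C$, completing the proof that $X$ is $G$--equivariantly quasiisometric to $S_\C$.
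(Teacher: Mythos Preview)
Your overall strategy matches the paper's: combine Lemma~\ref{lem:C_is_gluable} with Proposition~\ref{prop:dual_hyp} for hyperbolicity, establish $\dist_\C\asymp\dist_S$ on $S$ via a dominant-factor pigeonhole, and then invoke Proposition~\ref{prop:density}. However, the $L$--chain verification has a genuine gap. You take $h,k\in c\subset\D_{i^*}$ and, for $d'\in\D$ crossing both, apply Lemma~\ref{lem:no_grids} to the grid $(\{h,k\},d'_j)$. But that lemma only yields $\min\{|d'_j|,|\{h,k\}|\}\le L$, and since $|\{h,k\}|=2\le L$ this says nothing whatsoever about $|d'_j|$. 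Your remark about choosing $L$ ``large enough to absorb the factor of $m$'' cannot rescue this: $L$ is the constant \emph{defining} $\C$, so enlarging it only weakens the condition you are trying to verify. The paper's fix is to \emph{thin} the monochromatic chain before using it: from $d_i\in\D_i$ pass to the subchain $d'_i=(h_{2L},h_{4L},\dots)$. Now any wall crossing two elements of $d'_i$ must cross an entire subchain of $d_i$ of length exceeding $L$, so any $d'\in\D$ crossing two elements of $d'_i$ sits in a grid against a chain longer than $L$, and Lemma~\ref{lem:no_grids} then forces $|d'|\le L$. Thus $d'_i\in\C$, at the cost of a further factor $2L$ in the lower bound.

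There is also a secondary issue with your weak rough geodesicity step. Saying it ``transports from the rough geodesicity of $X$ by applying the quasiisometry constants'' does not work: a quasiisometry to a roughly geodesic space only gives quasigeodesics with \emph{multiplicative} error, whereas the hypothesis of Proposition~\ref{prop:density} demands additive control. Concretely, from $\dist_\C\le\dist_S$ one gets $\dist_\C(s,p)+\dist_\C(p,t)\le\dist_S(s,t)+k$, but $\dist_S(s,t)$ can exceed $\dist_\C(s,t)$ by a multiplicative factor. The paper avoids this by choosing the intermediate points from the wall structure itself: fix a maximal $c=(h_1,\dots,h_n)\in\C$ realising $\dist_\C(s,t)$, pick $p_r$ on a $\dist_S$--rough geodesic lying in (a uniform neighbourhood of) the slice $h_r^+\cap h_{r+1}^-$, read off $\dist_\C(s,p_r)\ge r-\eps$ and $\dist_\C(p_r,t)\ge n-r-\eps$ directly from the wall count, and bound the sum $\dist_\C(s,p_r)+\dist_\C(p_r,t)$ using \cite[Lem.~5.3]{petytzalloum:constructing} together with the maximality of $c$.
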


\begin{proof}
The fact that $S_\C$ is a $3m$--roughly geodesic hyperbolic space is a combination of Lemma~\ref{lem:C_is_gluable} and Proposition~\ref{prop:dual_hyp}. Moreover, $G$ acts on $S_\C$ because it acts on $S$. We must show that $S_\C$ is quasiisometric to $X$. We do this in two steps. First, we show that, when restricted to $S$, the metric $\dist_\C$ is quasiisometric to the subspace metric $\dist_S$ inherited from $\prod_{i=1}^mT_{\D_i}$. Then we show that $(S,\dist_\C)$ is coarsely dense in $S_\C$.

Since $\dist_S=\dist_{\D^1}$ on $S$, the fact that $\C\subset\D^1$ implies that $\dist_\C\le\dist_S$. For the other direction, let $s,t\in S$ and consider some $d=(d_i)_{i=1}^m\in\D^1$ realising $\dist_S(s,t)$. There is some $i$ such that $|d_i|\ge\frac1m|d|$. Note that $d_i$ is a chain. Write $d_i=(h_1,\dots,h_n)$, and consider the subchain $d'_i=(h_{2L},h_{4L},h_{6L},\dots)$. We have $|d'_i|\ge\frac{|d_i|}{2L}-1$. Any element of $W$ crossing two elements of $d'_i$ must cross a subchain of $d_i$ of length $2L>L$. By Lemma~\ref{lem:no_grids}, any chain of such elements has length at most $L$, which shows that $d'_i$ is an $L$--chain. We have shown that $\dist_\C\ge\frac1{2mL}(\dist_S-m)$. Thus $(S,\dist_\C)$ is quasiisometric to $(S,\dist_S)$.

It remains to show that $(S,\dist_\C)$ is coarsely dense in $S_\C$. According to Proposition~\ref{prop:density}, it is enough to show that $(S,\dist_\C)$ is weakly roughly geodesic.

Let $s,t\in S$, and let $\gamma\subset S$ be a rough geodesic from $s$ to $t$, the existence of which is given by Lemma~\ref{lem:orbit_hyperbolic}. Let $c\in\C$ realise $\dist_\C(s,t)$, and write $c=\{h_1,\dots,h_n\}$, where $n=\dist_\C(s,t)$. Since $c$ is a chain and $\gamma$ is a rough geodesic, for each $r\le n$ there is a point $p_r\in\gamma$ lying uniformly close to $h_r^+\cap h_{r+1}^-\subset S$. That is, there is a uniform constant $\eps$ such that all of $h_1,\dots,h_{r-\eps}$ separate $s$ from $p_r$, and all of $h_{r+1+\eps},\dots,h_n$ separate $p_r$ from $t$. In particular, $\dist_\C(s,p_r)\ge r-\eps$ and $\dist_\C(p_r,t)\ge n-r-\eps$.

Let $c_1\in\C$ realise $\dist_\C(s,p_r)$, and let $c_2\in\C$ realise $\dist_\C(p_r,t)$. According to \cite[Lem.~5.3]{petytzalloum:constructing}, there is some $c'\subset c_1\cup c_2$ such that $c'\in \C$ separates $s$ from $t$ and $|c'|\ge|c_1|+|c_2|-L-m-1$. By the choice of $|c|$, we have $|c'|\le|c|$, which shows that $\dist_\C(s,p_r)+\dist_\C(p_r,t)\le\dist_\C(s,t)+L+m+1$. We have shown that the points $p_1,\dots,p_n$ form a uniform-quality weak rough geodesic from $s$ to $t$, completing the proof.
\end{proof}

\section{Cylinders} \label{sec:stable}

Let $X$ be a roughly geodesic hyperbolic space, acted on by a group $G$, with strong quasitree rank $m<\infty$. In this section, we prove that the thickening $S_\C$ constructed in Section~\ref{sec:dualising_subspaces} has globally stable cylinders. Our proof is inspired by that of \cite{lazarovichsageev:globally}. We maintain the notation of Section~\ref{sec:dualising_subspaces}.

We shall have cause to consider elements of $\C$ that arise from a single $T_i$.

\begin{definition}[Monochromatic]
We say that $c\in\C$ is \emph{monochromatic} if there exists $i$, the \emph{colour} of $c$, such that $c$ is an element of $\D_i$.
\end{definition}

Given a pair of points $x,y\in S_\C$, we shall consider certain collections of halfspaces, which will be used to define ``intervals'' from $x$ to $y$.

\ubsh{Non-separating walls}
For $x,y\in S_\C$, let $\H(x,y)$ denote the set of all walls $h\in W$ that do not separate $x$ from $y$. We shall always orient $h\in\H(x,y)$ so that $\{x,y\}\subset h^+$. Define
\[
[x,y] \,=\, \bigcap_{h\in\H(x,y)}h^+.
\]
\uesh

Note that, as an intersection of halfspaces, the set $[x,y]\subset S_\C$ is gated.

\begin{lemma} \label{lem:interval_quasiline}
For any $x,y\in S_\C$, the set $[x,y]$ contains a $3m$--rough geodesic from $x$ to $y$ and is uniformly quasiisometric to an interval of length $\dist_\C(x,y)$.
\end{lemma}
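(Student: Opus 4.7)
The first observation is that $[x,y]$ is gated. Indeed, it is defined as an intersection of halfspaces, and the assignment $\phi(h)=h^+$ for each $h\in\H(x,y)$ (oriented so that $\{x,y\}\subset h^+$) is readily checked to be a filter, so Definition~\ref{def:gated} applies. Hence there is a $1$--Lipschitz gate map $\g = \g_{[x,y]}\colon S_\C \to [x,y]$ fixing $[x,y]$ pointwise.

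For the first assertion, the plan is to appeal to \cite[Prop.~4.6]{petytzalloum:constructing}, as in the proof of Proposition~\ref{prop:0-sep_quasitree}, to obtain a median $3m$--rough geodesic $\gamma\colon[0,\dist_\C(x,y)]\to S_\C$ from $x$ to $y$. Because $\gamma$ is median, each point $\gamma(s)$ equals $\mu(x,y,\gamma(s))$. The convexity of halfspaces in the median algebra $S_\C$ (for any $h\in\H(x,y)$, both $x$ and $y$ lie in $h^+$, so majority vote places $\mu(x,y,z)\in h^+$ for every $z$) then forces $\gamma(s)\in h^+$ for all $h\in\H(x,y)$. Thus $\gamma\subset[x,y]$, giving the required rough geodesic.

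For the second assertion, the strategy is to show that every $p\in[x,y]$ lies within a uniform distance of $\gamma$. First, any wall separating $x$ from $p$ must lie outside $\H(x,y)$: if it lay in $\H(x,y)$, then $p\in h^+$ by definition of $[x,y]$, contradicting the separation. So every wall separating $x$ from $p$, and symmetrically every wall separating $p$ from $y$, must separate $x$ from $y$. Given $\C$--chains $c_1$ and $c_2$ realising $\dist_\C(x,p)$ and $\dist_\C(p,y)$, one applies the gluing lemma \cite[Lem.~5.3]{petytzalloum:constructing}, exactly as in the weak-rough-geodesic argument of Proposition~\ref{prop:dual_SC}, to obtain a $\C$--chain $c'\subset c_1\cup c_2$ separating $x$ from $y$ with $|c'|\ge|c_1|+|c_2|-L-m-1$. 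Hence
\[
\dist_\C(x,p)+\dist_\C(p,y)\;\le\;\dist_\C(x,y)+L+m+1.
\]
By hyperbolicity of $S_\C$ (Proposition~\ref{prop:dual_SC}), this ``on a rough geodesic'' inequality forces $p$ to lie within some uniform constant $D$ of $\gamma$. Parametrising $[x,y]$ by $p\mapsto\dist_\C(x,p)$ then furnishes a uniform quasiisometry with the interval $[0,\dist_\C(x,y)]$: the map is $1$--Lipschitz by the triangle inequality, while the lower bound comes from combining closeness to $\gamma$ with the fact that two points on a rough geodesic at comparable distances from an endpoint are themselves close.

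I expect the main delicate step to be the gluing of $c_1$ and $c_2$: they share the point $p$ in their ``middles,'' and the walls closest to $p$ in each chain may cross rather than line up, so $c_1\cup c_2$ is not automatically a chain. This is precisely what \cite[Lem.~5.3]{petytzalloum:constructing} is engineered to handle, at the cost of discarding at most $L+m+1$ walls. Every other ingredient is standard: convexity of halfspaces, $1$--Lipschitzness of the gate map, and the Morse-type fact that points with additively-defective triangle equality are uniformly close to any rough geodesic between the endpoints.
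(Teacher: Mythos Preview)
Your proof is correct, and the first assertion is handled essentially as in the paper. For the second assertion you take a somewhat different route. The paper argues that for each $z\in[x,y]$ the concatenation of a median path from $x$ to $z$ with one from $z$ to $y$ is itself a coarsely connected median path from $x$ to $y$ lying in $[x,y]$; it then invokes the general fact \cite[Lem.~2.16]{petyt:onlarge} that coarsely connected quasimedian paths in hyperbolic spaces are unparametrised quasigeodesics, and finishes with the Morse lemma. You instead establish the additive-defect inequality $\dist_\C(x,p)+\dist_\C(p,y)\le\dist_\C(x,y)+L+m+1$ directly from the wall combinatorics (every wall separating $p$ from an endpoint must separate the endpoints from each other) together with the gluing lemma \cite[Lem.~5.3]{petytzalloum:constructing}, and then use hyperbolicity to conclude that $p$ is close to $\gamma$. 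Your approach is a little more hands-on but has the virtue of relying only on results from \cite{petytzalloum:constructing} that are already invoked elsewhere in the paper (notably in the proof of Proposition~\ref{prop:dual_SC}), rather than importing an additional lemma from \cite{petyt:onlarge}; the paper's approach is shorter and more conceptual, since ``every point lies on a uniform quasigeodesic between the endpoints'' immediately yields the quasi-interval structure via Morse.
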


\begin{proof}
By the Morse lemma, it suffices to show that every point of $[x,y]$ lies on a uniform quasigeodesic from $x$ to $y$ inside $[x,y]$. Given $z\in[x,y]$, there exists a coarsely connected median path from $x$ to $y$ that passes through $z$ by \cite[Prop.~4.6]{petytzalloum:constructing}: it is the concatenation of a median path from $x$ to $z$ with a median path from $z$ to $y$. Taking $z=x$, we have a $3m$--rough geodesic in $[x,y]$ from $x$ to $y$. Otherwise, as any coarsely connected quasimedian path in a hyperbolic space is an unparametrised quasigeodesic \cite[Lem.~2.16]{petyt:onlarge}, a change of parametrisation proves the lemma.
\end{proof}

Recall the constant $L$ from Lemma~\ref{lem:no_grids}, which bounds the size of square grids in $W$. We are free to assume that $L\ge3$.

\ubsh{Distant walls}
Let $x,y\in S_\C$. We say that $h\in\H(x,y)$ is \emph{distant} (from $x$ and $y$) if every monochromatic $c\in\C$ that separates $x$ from $y$ and crosses $h$ has $|c|\le L$. Write $\H^L(x,y)\subset\H(x,y)$ for the set of all distant walls. Define
\[
I(x,y) \,=\, \bigcap_{h\in\H^L(x,y)}h^+.
\]
\uesh

The cylinders we shall consider will be uniform neighbourhoods of the subsets $I(x,y)$; taking a neighbourhood is only necessary because $I(x,y)$ may not contain all rough geodesics from $x$ to $y$. The other inclusion, that cylinders lie in a neighbourhood of every rough geodesic, will be a consequence of the following. 

\begin{lemma} \label{lem:cylinder_close}
For every $x,y\in S_{\C}$, we have $[x,y] \subset I(x,y) \subset \cal N([x,y], 1)$.
\end{lemma}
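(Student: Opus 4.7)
The first inclusion $[x,y]\subset I(x,y)$ is immediate, since $\H^L(x,y)\subset\H(x,y)$ makes $I(x,y)$ the intersection of a subcollection of the halfspaces cutting out $[x,y]$.

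For the reverse inclusion, given $z\in I(x,y)$ I would consider the gate $z^\ast:=\g_{[x,y]}(z)$ of $z$ onto $[x,y]$, noting that $[x,y]$ is gated as an intersection of halfspaces, and aim to show $\dist_\C(z,z^\ast)\le 1$. By definition of the gate, $z^\ast$ differs from $z$ exactly on the walls separating $z$ from all of $[x,y]$. Any such wall $h$ must orient $[x,y]\subset h^+$ and $z\in h^-$, and since $\{x,y\}\subset[x,y]$, it cannot separate $x$ from $y$, so it lies in $\H(x,y)$. The hypothesis $z\in I(x,y)$ rules out $h\in\H^L(x,y)$, so the set $F$ of walls flipped by the gate satisfies $F\subset\H(x,y)\ssm\H^L(x,y)$: every wall of $F$ is non-distant.

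For the main step, suppose toward contradiction that some $c\in\C$ separating $z$ from $z^\ast$ contains two walls $h_1,h_2$. They must then lie in $F$ and, being in the chain $c$, cannot cross. The inclusions $\{x,y\}\subset h_1^+\cap h_2^+$ and $z\in h_1^-\cap h_2^-$ rule out two of the four possible disjoint quarters, forcing the halfspaces to nest with both positive sides containing $\{x,y\}$; without loss of generality $h_2^+\subsetneq h_1^+$, so $h_1^-\subsetneq h_2^-$. Non-distance of $h_1$ then supplies a monochromatic chain $c_1\in\C$ with $|c_1|>L$ that separates $x$ from $y$ and whose every wall crosses $h_1$. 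For each $k\in c_1$, the crossing condition gives $k^\pm\cap h_1^\pm\ne\varnothing$; the inclusion $h_1^-\subset h_2^-$ upgrades this to $k^\pm\cap h_2^-\ne\varnothing$, and $x\in k^-\cap h_2^+$ together with $y\in k^+\cap h_2^+$ supplies the remaining two quarters, so $k$ crosses $h_2$ as well. Therefore $c_1\in\D$ has length greater than $L$ with every wall crossing both $h_1$ and $h_2$, contradicting the $L$-chain property of $c$. Hence $|c|\le 1$ for every $c\in\C$ separating $z$ from $z^\ast$, so $\dist_\C(z,z^\ast)\le 1$ and $z\in\N([x,y],1)$.

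The delicate step is the passage from ``$c_1$ crosses $h_1$'' to ``$c_1$ crosses $h_2$.'' This relies on fixing the correct nesting of $h_1,h_2$ from the positions of $x,y,z$, after which the verification is a direct check of the four quarterspaces. Everything else, notably the identification of the flipped-wall set $F$ with the non-distant walls of $\H(x,y)$ oriented away from $z$, is a formal consequence of the definitions of gated set and distant wall.
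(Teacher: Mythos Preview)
Your proof is correct and follows essentially the same approach as the paper's: take the gate to $[x,y]$, suppose two walls of a $\C$--chain separate the point from its gate, use non-distance of the inner wall to produce a long monochromatic chain, and show that chain crosses both walls, contradicting $L$--separation. Your labelling of $h_1,h_2$ is reversed relative to the paper's (your $h_1$ plays the role of the paper's $h_2$), and you spell out the quarterspace verification that the paper leaves implicit, but the argument is the same.
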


\begin{proof} 
Given $p\in I(x,y)$, let $q=\g_{[x,y]}(p)$. If $\dist_\C(p,q)>1$ then there exists $\{h_1,h_2\} \in \C$ separating $p$ from $q$. By definition of the gate, each $h_i$ separates $p$ from $[x,y]$. In particular, $h_i\in\H(x,y)$. After relabelling, $h_1^+\subset h_2^+$. Since $p\in h_2^-$, we must have $h_2\not\in\H^L(x,y)$, so there must exist a monochromatic $c\in\C$ that separates $x$ from $y$, crosses $h_2$, and has $|c|>L$. But then $c$ crosses both $h_1$ and $h_2$, contradicting the fact that $\{h_1,h_2\}\in\C$.
\end{proof}

The next lemma shows that the gate to $[x,y]$ of any distant wall is uniformly bounded.

\begin{lemma} \label{lem:small_gate} 
Given $x,y \in S_{\C}$, if $h\in\H^L(x,y)$, then $\diam(\g_{[x,y]}(h^-))\leq mL$.
\end{lemma}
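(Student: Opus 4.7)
The plan is to analyse $\g_{[x,y]}(h^-)$ as a gated subset, identify which walls of $W$ can separate two of its points, and then exploit the distant hypothesis colour-by-colour to bound diameters.

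First I would apply Lemma~\ref{lem:gating_halfspaces} to see that $\g_{[x,y]}(h^-)$ is itself gated, and inspect the filter produced in the proof of that lemma: $\g_{[x,y]}(h^-)$ is cut out of $[x,y]$ by imposing the extra orientation, for every wall $k \notin \H(x,y)$ that has a halfspace containing $h^-$, that the selected halfspace is precisely the one containing $h^-$. In other words, $p\in\g_{[x,y]}(h^-)$ iff $p\in[x,y]$ and, for every wall $k$ that separates $x$ from $y$ but does not cross $h$, $p$ lies on the same side of $k$ as $h^-$.

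Now pick $p,q\in\g_{[x,y]}(h^-)$ and let $k\in W$ be any wall separating them. Two observations: if $k$ does not separate $x$ from $y$, then $k\in\H(x,y)$ and $p,q\in[x,y]\subset k^+$, so $k$ cannot separate $p$ from $q$; if $k$ separates $x$ from $y$ but does not cross $h$, then one halfspace of $k$ contains $h^-$, and by the characterisation above both $p$ and $q$ lie in that halfspace, again a contradiction. Hence any wall separating $p$ from $q$ must both separate $x$ from $y$ \emph{and} cross $h$.

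To conclude, take $c\in\C$ realising $\dist_\C(p,q)$ and decompose it by colour as $c=c_1\cup\dots\cup c_m$, where $c_j$ consists of those walls of $c$ lying in $W_j$. Each $c_j$ is a subchain of $c$, hence still a chain, and it is still an $L$--chain (any $d'\in\D$ crossing two elements of $c_j$ crosses two elements of $c$, so $|d'|\le L$), so $c_j\in\C$; moreover $c_j\in\D_j$, so $c_j$ is monochromatic. Every element of $c_j$ separates $x$ from $y$ and crosses $h$, inherited from the previous paragraph. Since $h\in\H^L(x,y)$ is distant, the distant hypothesis gives $|c_j|\le L$ for each $j$, and summing over colours yields $\dist_\C(p,q)=|c|\le mL$.

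The only mildly subtle step is the second paragraph: correctly reading off from Lemma~\ref{lem:gating_halfspaces} which walls are forced to agree on the gate $\g_{[x,y]}(h^-)$, so that one can rule out all walls not simultaneously separating $\{x,y\}$ and crossing $h$. Once that is in place, the colour decomposition is routine, and the factor $m$ in the bound is exactly the number of quasitree factors.
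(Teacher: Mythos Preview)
Your proof is correct and follows essentially the same approach as the paper's. The paper is a bit more direct: it starts with $a,b\in h^-$, lets $c\in\C$ realise $\dist_\C(\g_{[x,y]}(a),\g_{[x,y]}(b))$, and observes that by the gate property $c$ separates $a$ from $b$ (both in $h^-$) while also separating two points of $[x,y]\subset h^+$, so every element of $c$ crosses $h$ and separates $x$ from $y$; then the colour decomposition and distant hypothesis give $|c|\le mL$. Your route through the explicit filter description from Lemma~\ref{lem:gating_halfspaces} reaches exactly the same conclusion about which walls can separate two points of $\g_{[x,y]}(h^-)$, just with a little more scaffolding.
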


\begin{proof} 
Let $a,b\in h^-$, and let $c\in\C$ realise $\dist_\C(\g_{[x,y]}(a),\g_{[x,y]}(b))$. By definition of the gate, $c$ separates $a$ from $b$, and hence every member of $c$ crosses $h$. Since $h$ is distant, every monochromatic subset of $c$ has cardinality at most $L$, so $|c|\le mL$.
\end{proof}

In view of Lemma~\ref{lem:small_gate}, we can consider the distant walls whose gate to $[x,y]$ is close to $x$.

\ubsh{Initial distant walls}
Given $x,y\in S_\C$ and a constant $t$, we write $\H^L_t(x,y)$ for the set of all $h\in\H^L(x,y)$ such that $\g_{[x,y]}(h^-)$ lies in the $t$--ball centred on $x$.
\uesh

We now move to considering the stability properties of the intervals $I(x,y)$. Fix three points $x,y,z\in S_\C$. For ease of notation, we shall write $\mu=\mu(x,y,z)$. Let $r=\dist_\C(x,\mu)$. Our goal is to show that the intersections of $I(x,y)$ and $I(x,z)$ with $B(x,r)$ agree outside a uniform number of balls of uniform size.

\begin{lemma} \label{lem:difference_nonseparating}
If $h\in\H^L_{r-1}(x,y)$, then $h\in\H(x,z)$.
\end{lemma}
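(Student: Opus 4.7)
The plan is to argue by contradiction, using the fact that $S_\C$ is an honest median algebra. Recall that walls $h \in \mathcal{H}(x,y)$ are oriented so that $\{x,y\} \subset h^+$. The target conclusion $h \in \mathcal{H}(x,z)$ is therefore equivalent to $z \in h^+$, so suppose for contradiction that $z \in h^-$.

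The key step is to identify the median $\mu = \mu(x,y,z)$ with the gate $\mathfrak{g}_{[x,y]}(z)$. This is a standard feature of median algebras, and it is immediate from the ultrafilter description used in the $\D$-dual construction. Indeed, by Definition~\ref{def:gated} applied to $A = [x,y] = S_\C \cap \bigcap_{h' \in \mathcal{H}(x,y)} (h')^+$, the gate $\mathfrak{g}_{[x,y]}(z)$ orients each wall $h' \in \mathcal{H}(x,y)$ so as to contain $\{x,y\}$, and orients every other wall the same way $z$ does. On the other hand, $\mu(x,y,z)$ selects, on each wall, the halfspace chosen by the majority of $\{x,y,z\}$. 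These orientations agree wall-by-wall: on walls in $\mathcal{H}(x,y)$ the majority follows $x$ and $y$; on walls not in $\mathcal{H}(x,y)$ the points $x$ and $y$ lie on opposite sides, so the majority follows $z$. Hence $\mathfrak{g}_{[x,y]}(z) = \mu$.

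Granting this identification, since $z \in h^-$ we have $\mu = \mathfrak{g}_{[x,y]}(z) \in \mathfrak{g}_{[x,y]}(h^-)$. But the hypothesis $h \in \mathcal{H}^L_{r-1}(x,y)$ says exactly that $\mathfrak{g}_{[x,y]}(h^-) \subset B(x, r-1)$, so $\dist_\C(x, \mu) \le r - 1$. This contradicts the definition $r = \dist_\C(x, \mu)$, completing the proof.

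The only non-routine ingredient is the equality $\mathfrak{g}_{[x,y]}(z) = \mu(x,y,z)$, and even that is transparent once one writes both sides out wall-by-wall using the ultrafilter picture; distance-ness of $h$ and the quasitree machinery play no role, being needed only in subsequent lemmas that quantify the gate sizes.
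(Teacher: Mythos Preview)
Your proof is correct and follows essentially the same approach as the paper. The paper's argument is a two-line version of yours: it asserts $\g_{[x,y]}(z)=\mu$ and derives the same contradiction, whereas you additionally spell out the ultrafilter computation establishing that equality.
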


\begin{proof}
Since $\{x,y\}\subset h^+$, if $h$ separates $x$ from $z$, then $z\in h^-$. But then $\g_{[x,y]}(z)=\mu$ is at distance $r$ from $x$, contradicting the assumption that $h\in\H^L_{r-1}(x,y)$.
\end{proof}

Lemma~\ref{lem:difference_nonseparating} shows that if $h$ lies in $\H^L_{r-1}(x,y)$, then the only way that $h$ can fail to be in $\H^L(x,z)$ is if $h\in\H(x,z)$ and there is some monochromatic chain in $\C$ of length $L+1$ that crosses $h$ and separates $x$ from $z$.



The following proposition is the key to proving stability of cylinders. The argument is depicted in Figure~\ref{fig:projections} below.

\begin{proposition} \label{prop:few_projections}
Suppose that $h_1,\dots,h_n$ all lie in $\H^L_{r-L-1}(x,y)\ssm\H^L(x,z)$. If we have 
\[
\dist_\C\big(\g_{[x,y]}(h_i^-),\,\g_{[x,y]}(h_j^-)\big) \,>\, L
\]
for all $i\ne j$, then $n\le m$.
\end{proposition}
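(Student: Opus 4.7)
The plan is a pigeonhole argument on the $m$ colours coming from the quasitree factors; the fact that the target bound $n \le m$ matches the number of factors is the telltale hint. The starting point is to produce, for each $i$, a monochromatic chain witnessing the failure of $h_i$ to be distant for $(x,z)$. By Lemma~\ref{lem:difference_nonseparating}, which applies since $\H^L_{r-L-1}(x,y) \subseteq \H^L_{r-1}(x,y)$, every $h_i$ lies in $\H(x,z)$. Since $h_i \notin \H^L(x,z)$, unpacking the definition of \emph{distant} yields a monochromatic chain $c_i \in \C$, of length at least $L+1$, whose members all cross $h_i$ and which separates $x$ from $z$. Let $t_i \in \{1,\dots,m\}$ be the colour of $c_i$. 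If $n > m$, pigeonhole forces $t_i = t_j = t$ for some $i \ne j$.

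From here I would try to reach a contradiction by combining the data on both sides. On the ``$y$-side'' we have the hypothesis $\dist_\C(g_i,g_j) > L$, where $g_k = \g_{[x,y]}(h_k^-)$; on the ``$z$-side'' we have two monochromatic chains $c_i, c_j \in \D_t$ that both separate $x$ from $z$ and whose members cross $h_i$ and $h_j$ respectively. My preferred route is to first glue portions of $c_i$ and $c_j$ into a single long chain in $\D_t$ using the $m$-gluability of $\C$ from Lemma~\ref{lem:C_is_gluable}, and then to combine it with a chain in $\C$ of length greater than $L$ realising $\dist_\C(g_i, g_j)$ so as to produce a large square grid in $W$. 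Lemma~\ref{lem:no_grids}, together with the choice of $L$, forbids such a grid. An alternative is to use $c_i$ and the geometry near $g_j$ to extract a monochromatic chain of length greater than $L$ that crosses $h_j$ and separates $x$ from $y$, directly contradicting $h_j \in \H^L(x,y)$.

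The hard part is this contradiction step: linking the $y$-side data (the widely spread gates $g_i, g_j$ on the interval $[x,y]$) to the $z$-side data (the monochromatic chains $c_i, c_j$ separating $x$ from $z$). This requires careful bookkeeping of how walls inherited from different quasitree factors interact in $W$, and in particular how the walls in $c_i, c_j$ are forced to cross auxiliary walls realising the gate separation. The picture referenced as Figure~\ref{fig:projections} should be indispensable for tracking the configuration throughout.
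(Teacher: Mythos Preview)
Your setup is exactly right: extract the monochromatic witnesses $c_i$ via Lemma~\ref{lem:difference_nonseparating}, pigeonhole on the colour, and aim for a contradiction between the $y$-side separation chain $a$ (realising $\dist_\C(g_i,g_j)>L$) and the $z$-side chains $c_i,c_j$. This matches the paper. However, both contradiction routes you sketch have genuine gaps, and you are missing two key ingredients that the paper uses.

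First, your ``glue $c_i$ and $c_j$, then grid against $a$'' route does not work as stated. The chains $c_i$ and $c_j$ both separate $x$ from $z$ and there is no reason they concatenate: gluability requires $c_i\cup c_j$ to already be a chain with one below the other, which is not given. More importantly, even individually the elements of $c_i$ need not cross the elements of $a$: the paper in fact shows that most of $c_1$ \emph{separates} $h_2$ from $\mu$, so those walls lie entirely on one side of (most of) $a$. So no large grid with $a$ arises. Your alternative route, extracting from $c_i$ a long monochromatic chain crossing $h_j$, fails for the same reason: most of $c_1$ separates $h_2$ from $\mu$ and hence does not cross $h_2$ at all.

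The two missing ingredients are these. (1) An auxiliary chain $b\in\C$ of length $>L$ separating $\mu$ from $B(x,r-L)$, obtained from Lemma~\ref{lem:balls_gated}; this is precisely where the ``$-L-1$'' in $\H^L_{r-L-1}(x,y)$ is spent, and $b$ is what forces all but one element of each $c_i$ and of $a$ to lie on the correct side of $\mu$. (2) The contradiction is not via Lemma~\ref{lem:no_grids} or distantness, but via the \emph{$0$--separation of $\D_\chi$} from Lemma~\ref{lem:disparate_properties}. Concretely: using $b$ one isolates, for each $i$, the unique element $h_i'\in c_i$ that separates $y$ from $z$ (rather than $x$ from $y$); using $a$ one finds $k_1,k_2\in c_1$ separating $h_2$ from $\mu$; then $h_2'$, which crosses $h_2$ and separates $y$ from $z$, must cross both $k_1$ and $k_2$. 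Since $h_2',k_1,k_2$ all have colour $\chi$ and $\{k_1,k_2\}\in\D_\chi$, this contradicts $0$--separation. So the finish uses the fine structure of a single quasitree factor, not a coarse grid bound.
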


\begin{proof}
First, according to Lemma~\ref{lem:balls_gated}, the ball $B(x,r-L)$ in $S_\C$ centred on $x$ with radius $r-L$ is gated. Hence there is some $b\in\C$ that separates $\mu$ from $B(x,r-L)$ and has $|b|>L$. In particular, $b$ separates $\mu$ from every $\g_{[x,y]}(h_i^-)$.

By Lemma~\ref{lem:difference_nonseparating}, for each $i$ there is a monochromatic $c_i\in\C$, of colour $\chi_i\in\{1,\dots,m\}$, with $|c_i|>L$, and such that every element of $c_i$ crosses $h_i$ and separates $x$ from $z$.

Suppose for a contradiction that $n>m$. Two of the $c_i$ must have the same colour $\chi$, and after relabelling we can take them to be $c_1$ and $c_2$. Since $\dist_\C(\g_{[x,y]}(h_1^-),\g_{x,y]}(h_2^-))>L$ and every $\g_{[x,y]}(h_i^-)$ is gated (Lemma~\ref{lem:gating_halfspaces}), there exists some chain $a\in\C$ that separates $\g_{[x,y]}(h_1^-)$ from $\g_{[x,y]}(h_2^-)$ with $|a|>L$. Since both gates lie in $[x,y]$, every element of $a$ separates $x$ from $y$, and hence separates $h_1$ from $h_2$. 

Any element of $a$ that does not separate $x$ from $\mu$ must cross every element of $b$, because it both separates $h_1$ from $h_2$ and separates $y$ from $z$. Since $|b|>L$ and $a\in\C$, it follows that at most one element of $a$ can fail to separate $x$ from $\mu$. After relabelling the remaining elements of $a$ separate $h_1$ and $\mu$ from $h_2$ and $x$.

Now consider $c_1$ and $c_2$.  Any element of $c_i$ that does not separate $x$ from $y$ must cross every element of $b$. Since $|b|>L$ and $c_i\in\C$, there can be only one such element of $c_i$. Since $h_i\in\H^L(x,y)$ and $|c_i|>L$, there must actually be one such element, which we denote $h_i'$. In particular, $c_1\ssm\{h_1'\}$ separates $x$ from $\mu$.

Any element of $c_1$ that fails to separate $h_2$ from $\mu$ must cross every element of $a$. Since $|a|>L$ and $c_1\in\C$, at most one element of $c_1$ can fail to separate $h_2$ from $\mu$. Since $|c_1|>L\ge3$, at least two elements $k_1,k_2\in c_1$ separate $h_2$ from $\mu$. But now consider $h_2'$. Since it crosses $h_2$ and separates $z$ from $y$, it crosses both $k_1$ and $k_2$. But $\{k_1,k_2\}\in\D_\chi$ and $h_2'\in W_\chi$, which contradicts the 0--separation of $\D_\chi$ given by Lemma~\ref{lem:disparate_properties}.
\end{proof}

\begin{figure}[ht]
\centering
\includegraphics[height=6.5cm, trim = 0mm 4mm 0mm 4mm]{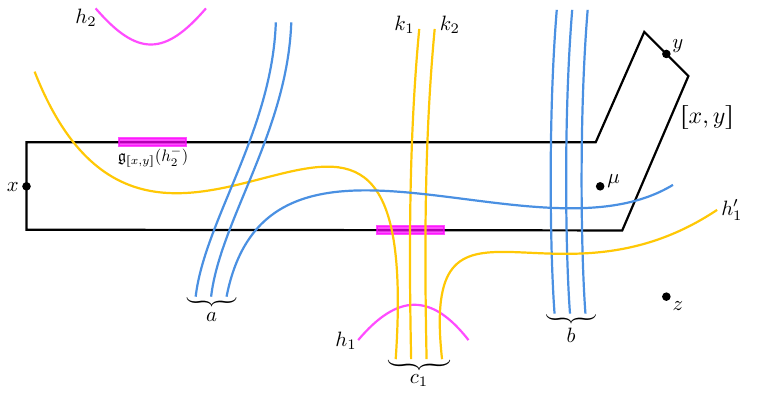}
\caption{Proof of Proposition~\ref{prop:few_projections}. The yellow walls $k_1$ and $k_2$ separate $h_2$ from both $y$ and $z$. Hence no other yellow wall can both cross $h_2$ and separate $y$ from $z$.} \label{fig:projections}
\end{figure}

\begin{corollary} \label{cor:remove_balls} 
There exist balls $B_1,\dots,B_m$ in $S_\C$, each of radius at most $2L(m+1)$, such that for any $h\in\H^L_{r-L-1}(x,y)\ssm\H^L(x,z)$, there is some $i$ such that $\g_{[x,y]}(h^-)\subset B_i$.
\end{corollary}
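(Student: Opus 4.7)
The plan is to combine Proposition~\ref{prop:few_projections} with a greedy selection procedure and Lemma~\ref{lem:small_gate}: the proposition caps the number of ``clusters'' of gates at $m$, while the lemma forces each cluster to fit inside a ball of uniformly bounded radius. The only slightly delicate point is justifying the existence of a maximal selection even when the ambient family of walls is infinite, but this is automatic from Proposition~\ref{prop:few_projections}, since any collection of pairwise $L$--separated gates has cardinality at most $m$ and so the greedy process must stabilise.

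First I would run the following greedy procedure on the family $\{\g_{[x,y]}(h^-) : h \in \H^L_{r-L-1}(x,y)\ssm\H^L(x,z)\}$: pick any wall $h_1$ in this set; then, if possible, pick $h_2$ so that $\dist_\C(\g_{[x,y]}(h_1^-),\g_{[x,y]}(h_2^-)) > L$; continue, at each stage selecting a next wall whose gate is at distance strictly greater than $L$ from each previously chosen gate. Proposition~\ref{prop:few_projections} forces this process to terminate after at most $m$ steps, producing walls $h_1,\dots,h_n$ with $n \le m$. By the maximality built into the procedure, every $h \in \H^L_{r-L-1}(x,y)\ssm\H^L(x,z)$ must satisfy $\dist_\C(\g_{[x,y]}(h^-),\g_{[x,y]}(h_i^-)) \le L$ for at least one $i \in \{1,\dots,n\}$.

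For each $i$, fix an arbitrary $p_i \in \g_{[x,y]}(h_i^-)$ and take $B_i = B(p_i, 2L(m+1))$; if $n < m$, pad the list with any additional balls. Given $h$ as above, choose $i$ such that there exist $a \in \g_{[x,y]}(h^-)$ and $b \in \g_{[x,y]}(h_i^-)$ with $\dist_\C(a,b) \le L$. For any $q \in \g_{[x,y]}(h^-)$, Lemma~\ref{lem:small_gate} together with the triangle inequality yield
\[
\dist_\C(q,p_i) \,\le\, \dist_\C(q,a) + \dist_\C(a,b) + \dist_\C(b,p_i) \,\le\, mL + L + mL \,\le\, 2L(m+1),
\]
which gives $\g_{[x,y]}(h^-) \subset B_i$ and completes the argument. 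I anticipate no serious obstacle here beyond this bookkeeping, since Proposition~\ref{prop:few_projections} supplies the essential geometric content.
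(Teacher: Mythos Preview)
Your proof is correct and follows essentially the same approach as the paper: choose a maximal collection of walls whose gates are pairwise at distance greater than $L$ (bounded in size by $m$ via Proposition~\ref{prop:few_projections}), center balls at points of those gates, and use Lemma~\ref{lem:small_gate} plus the triangle inequality to absorb every other gate. Your explicit greedy justification for the existence of a maximal collection is a harmless elaboration, and your radius estimate $2mL+L$ is in fact slightly sharper than needed.
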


\begin{proof}
By Proposition~\ref{prop:few_projections}, any collection of such hyperplanes whose gates to $[x,y]$ are pairwise at distance at least $L+1$ has cardinality at most $m$. Let $h_1,\dots,h_n$ be a maximal such collection. Fix $p_i\in\g_{[x,y]}(h_i^-)$, and let $B_i=B(p_i,2mL+L+1)$. By Lemma~\ref{lem:small_gate} and Proposition~\ref{prop:few_projections}, the gate to $[x,y]$ of any other such hyperplane lies in some $B_i$. Since $L\ge3$ we are done.
\end{proof}

We are now in a position to prove the main result of this section. 

\begin{theorem} \label{thm:cylinders}
Let $X$ be a roughly geodesic hyperbolic space, with an action of a group $G$. If $X$ has strong quasitree rank $m<\infty$, then there is some constant $R$ such that the pair $(G,X)$ admits globally $(2m+1,R)$--stable cylinders.
\end{theorem}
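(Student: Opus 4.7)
The strategy is to construct cylinders on the thickening $S_\C$ of Section~\ref{sec:dualising_subspaces} and transport them back to $X$ via Proposition~\ref{prop:cylinders_qi_invariant}. On $S_\C$, define
\[
C(x,y) \,=\, \N_{C_0}\bigl(I(x,y)\bigr)
\]
for a uniform constant $C_0$. Reversibility $C(x,y)=C(y,x)$ is automatic from the symmetric condition defining $\H^L(x,y)$, and $G$-equivariance follows from the $G$-invariance of $W$ inherited from the equivariant embedding $X\hookrightarrow\prod_i T_i$. The cylinder property — containment of every $3m$-rough geodesic from $x$ to $y$ and containment in a bounded neighbourhood of every such rough geodesic — is a consequence of Lemma~\ref{lem:interval_quasiline}, Lemma~\ref{lem:cylinder_close}, and the Morse lemma, once $C_0$ exceeds the Morse constant for $3m$-rough geodesics sharing endpoints.

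For stability, fix $x,y,z\in S_\C$ with $\mu=\mu(x,y,z)$ and $r=\dist_\C(x,\mu)$, so $\sgen{y,z}_x = r + O(\delta)$. Apply Corollary~\ref{cor:remove_balls} to the triple $(x,y,z)$ to obtain balls $B^y_1,\dots,B^y_m$ of uniform radius containing the $[x,y]$-gates of walls in $\H^L_{r-L-1}(x,y)\setminus\H^L(x,z)$, and apply it to $(x,z,y)$ for symmetric balls $B^z_1,\dots,B^z_m$. Let $B_\mu$ be a ball of sufficiently large uniform radius centred on $\mu$. The stability claim to be established is
\[
\bigl(C(x,y)\triangle C(x,z)\bigr)\cap B(x,\sgen{y,z}_x) \;\subset\; B_\mu \cup \bigcup_{i=1}^m B^y_i \cup \bigcup_{i=1}^m B^z_i,
\]
a total of $2m+1$ balls.

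By symmetry in $y,z$, consider $p\in C(x,y)\setminus C(x,z)$ with $\dist_\C(x,p)\le\sgen{y,z}_x$, and choose $p'\in I(x,y)\setminus I(x,z)$ within $C_0$ of $p$. There exists $h\in\H^L(x,z)$ with $p'\in h^-$; since $p'\in I(x,y)$, necessarily $h\notin\H^L(x,y)$. If $h$ separates $x$ from $y$, the symmetric version of Lemma~\ref{lem:difference_nonseparating} yields $h\notin\H^L_{r-1}(x,z)$, so $\g_{[x,z]}(h^-)$ sits within $O(L)$ of $\mu$. Otherwise $h\in\H(x,y)$; if additionally $h\in\H^L_{r-L-1}(x,z)$, then by Corollary~\ref{cor:remove_balls}, $\g_{[x,z]}(h^-)\subset\bigcup B^z_i$, while the remaining subcase again places $\g_{[x,z]}(h^-)$ within $O(L)$ of $\mu$. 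In every case $\g_{[x,z]}(p')\in\g_{[x,z]}(h^-)$ lies in $\bigcup B^z_i\cup B_\mu$ after an enlargement controlled by Lemma~\ref{lem:small_gate}.

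The main obstacle is passing from this gate information to the position of $p'$ itself. For this, note that $p'\in I(x,y)\subset\N_1([x,y])$ by Lemma~\ref{lem:cylinder_close}, so there is $g\in[x,y]$ with $\dist(p',g)\le 1$ and $\dist_\C(x,g)\le r+O(\delta)$. The hyperbolic tripod principle — that $[x,y]$ and $[x,z]$ $O(\delta)$-fellow-travel on the initial segment of length $r-O(\delta)$ from $x$, and diverge thereafter — implies that $\g_{[x,z]}(g)$ is either uniformly close to $g$ (when $g$ lies in the shared region) or uniformly close to $\mu$ (when $g$ is at or past $\mu$). Combining with $1$-Lipschitzness of the gate map and the diameter bound of Lemma~\ref{lem:small_gate} puts $p'$ itself into an $O(L,\delta)$-enlargement of $\bigcup B^z_i\cup B_\mu$. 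Choosing $R_0$ large enough absorbs all these enlargements, giving globally $(2m+1,R_0)$-stable cylinders on $S_\C$ with one ball always centred on the median. Proposition~\ref{prop:cylinders_qi_invariant} together with Remark~\ref{rem:stability} then transports these to $G$-equivariant globally $(2m+1,R)$-stable cylinders on $X$.
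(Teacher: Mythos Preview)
The proposal is correct and follows essentially the same route as the paper: define $C(x,y)=\N_\eps(I(x,y))$ on $S_\C$, apply Corollary~\ref{cor:remove_balls} once for each of the two asymmetric inclusions to produce $2m$ balls, add one ball at $\mu$, and transport to $X$ via Proposition~\ref{prop:cylinders_qi_invariant} together with Remark~\ref{rem:stability}. The one substantive difference is in the step bounding $\dist_\C(p',\g_{[x,z]}(p'))$: the paper argues combinatorially (all but one wall in a realising chain must cross a long chain $b\in\C$ separating $\mu$ from $B(x,r-L)$, forcing $|c|\le 2$ by $L$--separation), whereas you invoke the hyperbolic fellow-travelling of the quasilines $[x,y]$ and $[x,z]$ on their common initial segment, using Lemma~\ref{lem:interval_quasiline} and thin triangles to conclude that any $g\in[x,y]$ with $\dist_\C(x,g)\lesssim r$ is uniformly close to $[x,z]$, hence to its gate there. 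Both arguments are valid; yours is softer and uses less of the specific wall structure, while the paper's gives an explicit small bound. Your first case (``$h$ separates $x$ from $y$'') is in fact subsumed by the subcase $h\notin\H^L_{r-L-1}(x,z)$, so the case split could be streamlined, but this is harmless.
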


\begin{proof}
The result will follow from showing that $S_\C$ has globally stable cylinders. First we define the cylinders. The space $S_\C$ is $3m$--roughly geodesic by Proposition~\ref{prop:dual_SC}. By the Morse lemma, there is a constant $\eps$ such that any two $3m$--rough geodesics in $S_\C$ with the same endpoints are at Hausdorff-distance at most $\eps$. Let $x,y\in S_\C$. Lemma~\ref{lem:interval_quasiline} states that $[x,y]$ contains a $3m$--rough geodesic from $x$ to $y$ and is a uniform quasiline. By Lemma~\ref{lem:cylinder_close}, the same is true for $I(x,y)$. Thus $C(x,y)=\mathcal N(I(x,y),\eps)$ is a cylinder.

It is clear that this choice of cylinders is reversible and $G$--invariant. We show that it is stable. To this end, let $x,y,z\in S_\C$, and write $\mu=\mu(x,y,z)$. Again by Lemma~\ref{lem:interval_quasiline}, there is a uniform constant $\delta$ such that $r=\dist_\C(x,\mu)$ differs from the Gromov product $\rho=\sgen{y,z}_x$ by at most $\delta$.

Suppose that $p\in C(x,z)\ssm C(x,y)$, with $\dist_\C(x,p)\le\rho$. By definition, there exists $q\in I(x,z)\ssm I(x,y)$ with $\dist_\C(q,p)\le\eps$, and we therefore have that $\dist_\C(x,q)\le r+\delta+\eps$. If $\dist_\C(x,q)\ge r-mL-L$, then the fact that $S_\C$ is $3m$--roughly geodesic implies that $q$ lies in a uniform ball centred on $\mu$.

Otherwise, note that $\dist_\C(x,\g_{[x,y]}(q))\le\dist_\C(x,q)$. Since $q\not\in I(x,y)$, there must exist $h\in\H^L(x,y)$ such that $q\in h^-$. Because $q\in I(x,z)$, this must mean that $h\not\in\H^L(x,z)$. The fact that $q\in h^-$ also means that $\g_{[x,y]}(q)\in\g_{[x,y]}(h^-)$. Hence Lemma~\ref{lem:small_gate} implies that $h\in\H^L_{r-L}(x,y)$. Corollary~\ref{cor:remove_balls} now tells us that $\g_{[x,y]}(q)$ lies in one of $m$ balls of radius at most $2L(m+1)$.

We now wish to bound $\dist_\C(q,\g_{[x,y]}(q))$ in the case that $h\in\H^L_{r-L-1}(x,y)$, which will show that $q$, and hence $p$, lies in one of at most $m+1$ balls of uniform size, one of which is centred on $\mu$. Let $c\in\C$ realise $\dist_\C(q,\g_{[x,y]}(q))$. Since $q\in I(x,y)$, Lemma~\ref{lem:cylinder_close} tells us that at most one element of $c$ can separate $q$ from $[x,z]$. The remaining elements of $c$ cross $[x,z]$, and hence must separate $x$ from $z$. They therefore separate $\{q,z\}$ from $\{x,y\}$. We also know that $q\in B(x,r-L)$, so since the latter is gated (Lemma~\ref{lem:balls_gated}), there is some $b\in\C$ that separates $\mu$ from $\{x,q\}$ and has $|b|>L$. But now all but one elements of $c$ cross every element of $b$, so the fact that $c\in\C$ implies that $|c|\le2$.

We have shown that if $p\in C(x,z)\ssm C(x,y)$ has $\dist_\C(x,p)\le\rho$, then $p$ lies in one of at most $m+1$ balls of uniform size, one of which is centred on $\mu$. Reversing the roles of $y$ and $z$, we conclude that $S_\C$ admits globally $(2m+1,R_0)$--stable, $G$--equivariant cylinders, for some uniform constant $R_0$. According to Proposition~\ref{prop:cylinders_qi_invariant} and Remark~\ref{rem:stability}, any roughly geodesic space quasiisometric to $S_\C$ admits globally $(2m+1,R)$--stable cylinders, for some $R$, and if the quasiisometry is $G$--equivariant, then so are the cylinders. The result thus follows from Proposition~\ref{prop:dual_SC}.
\end{proof}

\section{Residually finite hyperbolic groups} \label{sec:rf_hyp} 

Let $G$ be a residually finite hyperbolic group. The following result was proved by Bestvina--Bromberg--Fujiwara.

\begin{theorem}[{\cite[Thm~1.1]{bestvinabrombergfujiwara:proper}}] \label{thm:bbf}
There is a finite product $\prod_{i=1}^mT_i$ of quasitrees on which $G$ admits an action such that its orbit maps are quasiisometric embeddings.
\end{theorem}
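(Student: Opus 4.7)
The plan is to invoke the Bestvina--Bromberg--Fujiwara \emph{projection complex} construction. Its input is a collection $\mathbf Y$ of subspaces of a metric space together with coarse closest-point projections $\pi_{Y_1} \colon Y_2 \to Y_1$ satisfying certain \emph{projection axioms}: roughly, for any triple of distinct subspaces $Y_1, Y_2, Y_3 \in \mathbf Y$, the projection of one onto a third through the other is uniformly bounded. Out of this data, the construction produces a quasitree on which the symmetries of $(\mathbf Y, \pi)$ act by isometries. If $G$ is virtually cyclic then $G$ is itself a quasitree and there is nothing to prove, so assume $G$ is non-elementary.

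The first step is to find finitely many hyperbolic elements $a_1, \dots, a_r \in G$ whose quasi-axes, together with their $G$--translates, span the geometry of $G$, in the sense that every geodesic segment of sufficient length in a Cayley graph of $G$ fellow-travels a definite distance with some translate $g\gamma_{a_i}$. Such a collection exists by non-elementariness and hyperbolicity: finitely many conjugacy classes of hyperbolic elements suffice to capture all directions in the Gromov boundary $\partial G$, and the stabilisers of their quasi-axes are virtually cyclic.

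The second step is to apply the projection complex to each family $\mathbf Y_i = \{g\gamma_{a_i} : g \in G\}$. The projection axioms hold \emph{coarsely} in the hyperbolic space $G$, but the required uniform bound on projections may fail, because two distinct $G$--translates of $\gamma_{a_i}$ can come arbitrarily close together. This is precisely where residual finiteness enters: a large projection of one axis onto another forces a short conjugator that coarsely preserves $\gamma_{a_i}$ over a long distance, and such an element must lie in a bounded neighbourhood of the virtually cyclic stabiliser of $\gamma_{a_i}$. Residual finiteness lets us pass to a finite-index normal subgroup $N_i \triangleleft G$ missing all of these finitely many ``problematic'' short elements. Colouring the cosets of $N_i$ by $[G : N_i]$ colours, projections within each colour class become uniformly bounded, and BBF produces a quasitree $T_i^{(c)}$ on which $N_i$ acts. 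Since $G$ merely permutes colours, the product $T_i = \prod_c T_i^{(c)}$ carries a natural $G$--action.

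Finally, form $\prod_{i=1}^r T_i$; the orbit map from $G$ is equivariant by construction, and is a quasiisometric embedding because, by the first step, any long word in $G$ is detected by large translation along some translate $g\gamma_{a_i}$, hence by large displacement in the corresponding quasitree $T_i^{(c)}$. The main obstacle is the residual-finiteness step: one must promote the coarse projection axioms to uniform ones in a finite-index subgroup, using a short-conjugator statement strong enough that a \emph{single} finite-index subgroup (per $a_i$) suffices. This is the technical heart of the Bestvina--Bromberg--Fujiwara argument and the only point where residual finiteness plays an essential role.
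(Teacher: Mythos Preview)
The paper does not prove this statement; it is quoted directly as \cite[Thm~1.1]{bestvinabrombergfujiwara:proper} and used as a black box. So there is no ``paper's own proof'' to compare against: the authors simply invoke the Bestvina--Bromberg--Fujiwara result and move on to showing (Proposition~\ref{prop:hyp_orbit_qm}) that the resulting orbit map is in addition quasimedian.

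Your sketch is a fair high-level summary of the BBF argument, and you correctly identify residual finiteness as the mechanism for separating axes into finitely many colour classes on which the projection axioms hold with uniform constants. One point is somewhat imprecise: the claim that ``finitely many conjugacy classes of hyperbolic elements suffice to capture all directions in $\partial G$'' is not quite how BBF proceed, nor is it literally true (no finite set of axis endpoints is dense in $\partial G$). What they actually use is a finite collection of hyperbolic elements built from a generating set, chosen so that any geodesic in the Cayley graph has a definite-length subsegment fellow-travelling some translate of some axis; this is a statement about the Cayley graph, not the boundary. With that correction, your outline matches the structure of the cited proof.
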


In other words, $G$ has property (QT) in the sense of \cite{bestvinabrombergfujiwara:proper}. However, it lacks median information, so it does not state that $G$ has strong (QT) in the sense of Definition~\ref{def:sqtr}. This is provided by the following. Whilst it has not been explicitly stated in the literature, its proof is very similar to those of \cite[Prop.~3.9]{hagenpetyt:projection} and \cite[Prop.~3.2]{petyt:mapping}. We have included an outline of the argument below, and we refer the reader to those papers for more detail. 

\begin{proposition} \label{prop:hyp_orbit_qm}
Orbit maps $G\to\prod_{i=1}^mT_i$ are quasimedian.
\end{proposition}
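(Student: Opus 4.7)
My plan is to reduce the claim to quasimedianness of each factor map $f_i \colon G \to T_i$, and then verify this using the Bestvina--Bromberg--Fujiwara (BBF) projection complex structure of $T_i$.

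First, because $\prod_{i=1}^m T_i$ is a product of hyperbolic spaces equipped with the $\ell^1$ metric, Section~\ref{subsec:coarse_median} tells us that its coarse median is the coordinate-wise median, up to a uniform additive error. Hence the orbit map $f \colon G \to \prod T_i$ is quasimedian if and only if each factor $f_i \colon G \to T_i$ is, and I reduce to the latter.

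Next I would exploit the fact that each $T_i$ is a BBF projection complex associated to a $G$--invariant collection $\mathcal{Y}_i$ of uniformly quasiconvex subspaces of $G$. Each $Y \in \mathcal{Y}_i$ is also uniformly quasiconvex in the hyperbolic space $T_i$, and, up to a uniform error, the BBF projection $\pi_Y^{T_i} \colon T_i \to Y$ pulled back by $f_i$ coincides with the nearest-point projection $\pi_Y^G \colon G \to Y$ in $G$. The BBF distance formula also yields the following separation principle: two points of $T_i$ whose projections to every relevant $Y \in \mathcal{Y}_i$ agree up to uniform error are themselves uniformly close in $T_i$.

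The key remaining observation is that nearest-point projection to a quasiconvex subset of a hyperbolic space is quasimedian, since the centre of a geodesic triangle projects close to the centre of the projected triangle. With this in hand, writing $m = \mu_G(x,y,z)$ and $m' = \mu_{T_i}(f_i x, f_i y, f_i z)$, for each $Y \in \mathcal{Y}_i$ I would chain
\[
\pi_Y^{T_i}(f_i m) \approx \pi_Y^G(m) \approx \mu_Y(\pi_Y^G x,\pi_Y^G y,\pi_Y^G z) \approx \mu_Y(\pi_Y^{T_i} f_i x,\pi_Y^{T_i} f_i y,\pi_Y^{T_i} f_i z) \approx \pi_Y^{T_i}(m'),
\]
each step introducing only uniform error. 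Applying the separation principle then gives $f_i(m) \approx m'$ uniformly in $x,y,z$, as desired. The main obstacle will be verifying the projection compatibility $\pi_Y^{T_i} \circ f_i \approx \pi_Y^G$ and the separation principle \emph{uniformly} from the projection axioms; the remainder is a bookkeeping chase through standard hyperbolic-projection estimates, mirroring \cite[Prop.~3.9]{hagenpetyt:projection} and \cite[Prop.~3.2]{petyt:mapping}.
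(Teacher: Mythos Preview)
Your outline is correct and essentially reproduces the argument of \cite[Prop.~3.9]{hagenpetyt:projection}, but the paper's own sketch takes a different route. Rather than invoking a separation principle via projection compatibility, the paper shows directly that geodesics in $G$ map under each $f_i$ to unparametrised quasigeodesics in $T_i$; since both source and target are hyperbolic, this immediately yields quasimedianness (the coarse median being the coarse centre of a geodesic triangle). The quasigeodesic-preservation is established by decomposing a geodesic $\gamma$ according to which axes $\alpha\in\cal A$ it has large projection to, observing that on those pieces $f_i$ is coarsely an inclusion and on the complementary pieces $f_i\gamma$ has bounded image. Your projection-chain approach buys generality (it is what one does when the domain is not hyperbolic, as in the HHS setting), while the paper's approach exploits the hyperbolicity of $G$ to avoid needing the BBF distance formula for the separation step.

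One point the paper makes explicit that you omit: $G$ may permute the factors $T_i$, so the paper first passes to a finite-index factor-preserving subgroup $H$ and checks quasimedianness of $f|_H$, then upgrades to $f$ using coarse density of $H$ in $G$. Your projection argument does not formally require equivariance, so this is not a gap, but verifying the compatibility $\pi_Y^{T_i}\circ f_i\approx\pi_Y^G$ uniformly is cleaner once $f_i$ is $H$--equivariant; the reduction would streamline your bookkeeping.
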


\begin{proof}[Proof outline]
Let $f$ be an orbit map, and let $H$ be a finite-index subgroup of $G$ that preserves the factors of the product. First note that it suffices to prove that $f|_H$ is quasimedian. Indeed, given $g_1,g_2,g_3\in G$, let $h_i\in H$ be uniformly close to $g_i$. The points $\mu_G(g_1,g_2,g_3)$ and $\mu_G(h_1,h_2,h_3)$ are uniformly close, and, since $f$ is a quasiisometric embedding, the points $\mu_\Pi(fg_1,fg_2,fg_3)$ and $\mu_\Pi(fh_1,fh_2,fh_3)$ are uniformly close. Hence, if $f|_H$ is quasimedian, then $f$ is quasimedian.

Let $f_i$ be the orbit of $H$ on $T_i$ induced by $f|_H$. Since the median on $\prod_{i=1}^mT_i$ is defined component-wise, it suffices to show that each $f_i$ is quasimedian. For that, it is enough to show that geodesics in $H$ get mapped to unparametrised quasigeodesics in $T_i$. This amounts to showing that if $\gamma\subset H$ is a geodesic, then $f_i\gamma$ does not backtrack too much.

The quasitrees $T_i$ are constructed from a collection $\cal A$ of quasigeodesics in $G$. For each $\alpha\in\cal A$, we can consider the closest-point projection $\pi_\alpha:\gamma\to\alpha$. In the case where $\pi_\alpha\gamma$ is a large subinterval, one can show that if $\pi_\alpha(x)$ is far from the endpoints of that interval, then $f_i(x)$ can coarsely be seen as the inclusion of $\pi_\alpha(x)$ in $T_i$. One can also show that, for each subpath $\gamma'$ of $\gamma$ not projecting to the interior of any such long subinterval of any $\alpha$, the image $f_i\gamma'$ is bounded.

By a careful choice of which subintervals to consider, one can then realise $f_i\gamma$ as a concatenation of unparametrised quasigeodesics that do not fellow-travel one another, which is an unparametrised quasigeodesic itself.  
\end{proof}

We can now apply the results of the previous sections to prove the following.

\begin{theorem} \label{thm:cylinders:hyperbolic}
Every residually finite hyperbolic group admits globally stable cylinders.
\end{theorem}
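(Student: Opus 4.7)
The plan is to verify that the Cayley graph of $G$ satisfies the hypotheses of Theorem~\ref{thm:cylinders}, and then read off the conclusion.

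First, fix a finite generating set for $G$ and let $X$ be the corresponding Cayley graph, on which $G$ acts properly, cocompactly, and by isometries. Since $G$ is hyperbolic, $X$ is a (roughly) geodesic hyperbolic space, and hence a coarse median space in the sense of Section~\ref{subsec:coarse_median}. Theorem~\ref{thm:bbf} applied to the residually finite hyperbolic group $G$ produces a finite product $\prod_{i=1}^m T_i$ of quasitrees together with a $G$-action whose orbit maps are quasiisometric embeddings. Fixing a basepoint, the orbit map $f\colon G\to\prod_{i=1}^m T_i$ is therefore a $G$-equivariant quasiisometric embedding of $X$ (after identifying $G$ with its orbit in $X$).

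Next, I would invoke Proposition~\ref{prop:hyp_orbit_qm} to upgrade this orbit map to a quasimedian quasiisometric embedding. Combining the last two facts shows that $(X,G)$ has finite strong quasitree rank in the sense of Definition~\ref{def:sqtr}; that is, $X$ has strong (QT). Now apply Theorem~\ref{thm:cylinders} to the pair $(G,X)$: this yields constants $k,R,\theta$ and a $G$-invariant choice of $\theta$-cylinders $C(x,y)$ on $X$ that is globally $(k,R)$-stable. By the definition at the end of Definition~\ref{def:gsc}, this precisely means that the hyperbolic group $G$ admits globally stable cylinders via the Cayley graph $X$.

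There is essentially no obstacle here, as all of the heavy lifting has been done in Sections~\ref{sec:QT}--\ref{sec:stable} and in the cited work of Bestvina--Bromberg--Fujiwara; the only thing to check is that the quasimedian improvement of Proposition~\ref{prop:hyp_orbit_qm} is available in the residually finite hyperbolic setting (which it is), so that one is entitled to apply Theorem~\ref{thm:cylinders} rather than merely having property (QT). The theorem follows at once.
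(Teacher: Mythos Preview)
Your proposal is correct and follows essentially the same approach as the paper: combine Theorem~\ref{thm:bbf} with Proposition~\ref{prop:hyp_orbit_qm} to conclude that $G$ has strong (QT), then apply Theorem~\ref{thm:cylinders}. The paper's proof is simply a more compressed version of what you wrote.
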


\begin{proof}
Let $G$ be a residually finite hyperbolic group. By Theorem~\ref{thm:bbf} and Proposition~\ref{prop:hyp_orbit_qm}, $G$ has strong (QT). The result follows from Theorem~\ref{thm:cylinders}.
\end{proof}

\section{Curve graphs} \label{sec:curve_graph} 

Let $\Sigma$ be a finite-type surface. The goal of this section is to show that the curve graph $\C\Sigma$ admits globally stable $\MCG\Sigma$--equivariant cylinders. In view of Theorem~\ref{thm:cylinders}, the main step remaining is to  show that $(\C\Sigma,\MCG\Sigma)$ has strong (QT). This will ultimately be a consequence of the following, which is a combination of \cite[Thm~1.2]{bestvinabrombergfujiwara:proper} and \cite[Prop.~3.2]{petyt:mapping}.

\begin{theorem} \label{thm:mcg_rank}
Mapping class groups have strong (QT): they admit actions on finite products of quasitrees so that orbit maps are quasimedian quasiisometric embeddings.
\end{theorem}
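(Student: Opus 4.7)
The plan is simply to combine the two cited inputs, taking advantage of the fact that the outline given for Proposition~\ref{prop:hyp_orbit_qm} already applies essentially verbatim once one has an equivariant quasiisometric embedding into a product of quasitrees. First I would invoke \cite[Thm~1.2]{bestvinabrombergfujiwara:proper} to obtain an action of $\MCG\Sigma$ on a finite product $\Pi = \prod_{i=1}^m T_i$ of quasitrees for which every orbit map $f : \MCG\Sigma \to \Pi$ is a quasiisometric embedding. This provides everything in the conclusion except the quasimedian property.

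To upgrade $f$ to being quasimedian I would pass to a finite-index subgroup $H \le \MCG\Sigma$ preserving the factors of $\Pi$, and observe that it suffices to verify the quasimedian property on $H$: given $g_1,g_2,g_3 \in \MCG\Sigma$, one picks $h_i \in H$ at bounded distance from $g_i$ and exploits the fact that coarse medians in both $\MCG\Sigma$ and $\Pi$ are coarsely stable under bounded perturbation (and that $f$ is a quasiisometric embedding), exactly as in Proposition~\ref{prop:hyp_orbit_qm}. Since the median on $\Pi$ is coordinate-wise, it further reduces to showing that each factor map $f_i : H \to T_i$ sends geodesics in $H$ to unparametrised quasigeodesics in $T_i$.

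The principal obstacle lies in this last step. The BBF quasitree $T_i$ is a quasitree-of-spaces construction on a single $\MCG\Sigma$-orbit of quasiconvex subspaces $\cal A_i$ indexed by an orbit of subsurfaces $U \in \s$, and the closest-point projection onto $\alpha \in \cal A_i$ coarsely agrees with the subsurface projection $\pi_U$ recalled in Section~\ref{subsec:mcg}. For a geodesic $\gamma \subset H$, I would decompose $f_i\gamma$ into (a) long pieces corresponding to intervals on which some $\pi_\alpha\gamma$ traverses a macroscopic subinterval of $\alpha$, and (b) bounded transition pieces between them. On each long piece, $f_i$ coarsely equals the coarsely Lipschitz, quasimedian map $\pi_U$, which sends geodesics in $\MCG\Sigma$ to unparametrised quasigeodesics in $\C U$ by Masur--Minsky hyperbolicity and the Behrstock--Minsky centroid construction; hence each long piece is an unparametrised quasigeodesic in $T_i$. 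The bounded geodesic image / consistency phenomena for subsurface projections prevent two consecutive long pieces from fellow-travelling, so the whole concatenation is an unparametrised quasigeodesic. Organising this decomposition uniformly in $\gamma$ is exactly the content abstracted in \cite[Prop.~3.2]{petyt:mapping}; once in hand, it delivers strong (QT) for $\MCG\Sigma$.
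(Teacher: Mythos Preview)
Your proposal is correct and follows essentially the same approach as the paper: the paper does not give a proof at all but simply records the theorem as a combination of \cite[Thm~1.2]{bestvinabrombergfujiwara:proper} (the quasiisometric embedding) and \cite[Prop.~3.2]{petyt:mapping} (the quasimedian upgrade). Your write-up is a faithful expansion of what lies behind those two citations, mirroring the outline given in the paper for the hyperbolic case (Proposition~\ref{prop:hyp_orbit_qm}), and correctly identifies that the factorwise unparametrised-quasigeodesic statement is the content of \cite[Prop.~3.2]{petyt:mapping}.
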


The quasitrees give dualisable systems on the product, as in Section~\ref{sec:QT}, and our strategy will be to refine these dualisable systems to obtain a new collection of quasitrees so that the orbit of the mapping class group is quasiisometric to the curve graph.

Let $\prod_{i=1}^mT_i$ be given by Theorem~\ref{thm:mcg_rank}, and let $f:\MCG\Sigma\to\prod_{i=1}^mT_i$ be an orbit map. Let $W_i$ and $\D_i$ be as in Section~\ref{sec:QT}, let $S$ denote the orbit of $\MCG\Sigma$ on $\prod_{i=1}^mT_i$. Let $W$, $\D^1$, and $\D$ be as in Section~\ref{sec:dualising_subspaces}. Specifically, $W$ consists of all walls induced by elements of the $W_i$, but with crossing data from $S$. Note that $\dist_{\D^1}$ is exactly the $\ell^1$ metric on $S$.

\begin{lemma} \label{lem:d1_and_d}
The identity map $(S,\dist_{\D^1})\to(S,\dist_\D)$ is a quasiisometry.
\end{lemma}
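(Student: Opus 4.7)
The plan is to show that the two metrics are bilipschitz equivalent, with $\dist_\D \le \dist_{\D^1} \le m \dist_\D$. The first inequality is immediate, since by construction $\D \subset \D^1$: every chain in $\D$ is itself a valid separating family for $\dist_{\D^1}$, so taking the supremum over the smaller family can only decrease the value.

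For the reverse inequality, the key observation is that $\D_i \subset \D$ for every $i$. Unpacking: by definition, each element of $\D_i$ is a (disparate) chain of walls in $W_i$, meaning the halfspaces in $T_i$ are nested. Nesting of halfspaces in $T_i$ restricts to nesting of halfspaces in $S$, so such a family is still a chain when its walls are viewed in $W$ with the crossing relation coming from $S$. Hence each element of $\D_i$ is a chain in $W$ and therefore lies in $\D$.

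Now fix $s,t \in S$ and let $d \in \D^1$ separate $s$ from $t$; write $d = \bigcup_{i=1}^m d_i$ with $d_i \in \D_i$. Each $d_i$ separates $s$ from $t$ (as every wall of $d$ does), and by pigeonhole some $d_{i_0}$ has $|d_{i_0}| \ge |d|/m$. Since $d_{i_0} \in \D_{i_0} \subset \D$ and separates $s$ from $t$, we obtain
\[
\dist_\D(s,t) \;\ge\; |d_{i_0}| \;\ge\; |d|/m.
\]
Taking the supremum over all such $d$ yields $\dist_{\D^1}(s,t) \le m \cdot \dist_\D(s,t)$. Combined with the trivial inequality, the identity map is $m$--bilipschitz, hence a quasiisometry. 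There is no real obstacle here; the only thing to check is that chains in $W_i$ really remain chains in $W$, which is the content of the second paragraph.
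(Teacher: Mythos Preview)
Your proof is correct and follows exactly the same approach as the paper: use $\D\subset\D^1$ for one inequality, and for the other apply pigeonhole to the monochromatic pieces $d_i$ of an element $d\in\D^1$, noting that each $d_i\in\D_i\subset\D$. The paper's proof is terser (it takes the inclusion $\D_i\subset\D$ as already established in Section~\ref{sec:dualising_subspaces}), but the content is identical.
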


\begin{proof}
Since $\D\subset\D^1$, we have $\dist_\D\le\dist_{\D^1}$. For the reverse, if $d=(d_i)_{i=1}^m\in\D^1$, then there is some $i$ such that $|d_i|\ge\frac{|d|}m$. Hence $\dist_\D\ge\frac1m\dist_{\D^1}$.
\end{proof}

By pulling back along $f$, each $h\in W$ induces a wall $\bar h$ of $\MCG\Sigma$. Since $f$ is quasimedian and the halfspaces of $h$ are median-quasiconvex in $S$, the halfspaces of $\bar h$ are median-quasiconvex. If $\overline W$ denotes the set of such walls on $\MCG\Sigma$, then every dualisable system on $(S,W)$ has an associated dualisable system on $(\MCG\Sigma,\overline W)$, and the associated metrics on $S$ and $\MCG\Sigma$ are quasiisometric via $f$.

For a constant $K$, define $\E^K=\{d\in\D\,:\,d\text{ is a }K\text{--chain}\}$. The following proposition will be the most technical aspect of our argument. See Section~\ref{subsec:mcg} for terminology.

\begin{proposition} \label{prop:curve_graph}
For sufficiently large $K$, the space $(S,\dist_{\E^K})$ is quasiisometric to the curve graph $\C\Sigma$.
\end{proposition}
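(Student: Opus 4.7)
The plan is to transfer the question to $\MCG\Sigma$ via the quasiisometry $f$ and compare the pulled-back metric $\overline\dist_{\E^K}$ with the curve-graph metric via the projection $\pi$. Since $\pi$ is coarsely surjective and $f$ is a quasiisometry onto $S$, it suffices to establish
\[
\overline\dist_{\E^K}(g,h) \,\asymp\, \dist_\Sigma(\pi g,\pi h)
\]
on $\MCG\Sigma$. The constant $K$ will be chosen large in terms of the hierarchy constant $E$, the grid bound $L$ of Lemma~\ref{lem:no_grids}, the complexity $\xi(\Sigma)$, and the constants governing the BBF quasitrees $T_i$.

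For the upper bound $\overline\dist_{\E^K}(g,h)\lesssim\dist_\Sigma(\pi g,\pi h)$, by concatenating along a $\C\Sigma$--geodesic it suffices to bound $\overline\dist_{\E^K}(g,h)$ uniformly when $\pi g$ and $\pi h$ share a disjoint curve $\alpha$. In this regime, bounded geodesic image together with the distance formula for $\MCG\Sigma$ force $\dist_U(g,h)$ to be uniformly bounded for every $U\in\s$ not nested in $\Sigma\ssm\alpha$; consequently every wall of $W$ separating $fg$ from $fh$ comes from a quasitree factor whose associated domain is nested in $\Sigma\ssm\alpha$. Using the curve stabiliser $\mathbf P_\alpha$ and its unbounded $\C\alpha$--factor (which embeds quasimedian quasiisometrically in the ambient product of quasitrees), one constructs long perpendicular chains in $\D$ crossing any two such walls, so that chains containing more than a bounded number of them cannot be $K$--chains. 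An induction on the complexity of $\Sigma\ssm\alpha$ handles the case where the separating walls spread across several nested proper subsurfaces.

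For the lower bound $\dist_\Sigma(\pi g,\pi h)\gtrsim\overline\dist_{\E^K}(g,h)$, given $d=(h_1,\dots,h_n)\in\E^K$ separating $fg$ from $fh$, the median-quasiconvexity of the halfspaces on $\MCG\Sigma$ (a consequence of $f$ being quasimedian) together with the gate construction from Section~\ref{sec:stable} produces a sequence $q_0=g,q_1,\dots,q_n=h$ with $f q_i$ in a uniform neighbourhood of $h_i^+\cap h_{i+1}^-$. The projected sequence $(\pi q_i)$ then makes linear progress in $\C\Sigma$: if $\pi q_i$ and $\pi q_{i+j}$ were at bounded $\C\Sigma$--distance for some $j\gg 1$, both $q_i$ and $q_{i+j}$ would lie near a common curve-stabiliser product region, and the walls $h_i,\dots,h_{i+j-1}$ between them—combined with the 0--separation of each $\D_i$ (Lemma~\ref{lem:disparate_properties})—would yield a chain in $\D$ of length greater than $K$ crossing two of them, contradicting $d\in\E^K$. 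Hence $n\lesssim\dist_\Sigma(\pi g,\pi h)$.

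The main obstacle is the \emph{wall--domain dictionary}: assigning each wall of $W$ (at least coarsely) to a subsurface of $\Sigma$ and verifying that walls arising from distinct proper subsurfaces admit shared long crossing chains, either directly through a common orthogonal product direction or inductively via the hierarchical decomposition of smaller-complexity subsurfaces. This is where the specific structure of the BBF quasitrees—each associated with a family of subsurfaces with controlled projections—is indispensable, and where $K$ must be taken larger than every hierarchy constant to guarantee that the $K$--chain condition isolates exactly the ``$\C\Sigma$--component'' of the product-of-quasitrees embedding.
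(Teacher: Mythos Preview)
Your two displayed inequalities are the same statement: $\overline\dist_{\E^K}\lesssim\dist_\Sigma$ and $\dist_\Sigma\gtrsim\overline\dist_{\E^K}$ are identical, and both of your argument paragraphs establish this single direction (the first by bounding $\overline\dist_{\E^K}$ on curve stabilisers, the second by starting from a $K$--chain of length $n$ and concluding $n\lesssim\dist_\Sigma$). You never address the reverse inequality $\dist_\Sigma\lesssim\overline\dist_{\E^K}$, which is the substantive direction: given $g,h$ far apart in $\C\Sigma$, you must \emph{construct} a long element of $\E^K$ separating $fg$ from $fh$. Nothing in your outline produces such a chain. The paper does this by taking widely spaced unit subintervals $\gamma_i$ of a $\C\Sigma$--geodesic, forming hulls $H_i=\hull(\pi^{-1}\pi_\gamma^{-1}\gamma_i)$ in $\MCG\Sigma$, finding chains $c_i\in\D$ separating $H_i$ from $H_{i+1}$, and then checking that the middle walls $h_{i,0}$ of alternate $c_i$ assemble into an element of $\E^K$: any wall crossing both $h_{i,0}$ and $h_{i+2,0}$ must cross both $H_{i+1}$ and $H_{i+2}$, and the bounded coarse gate between those hulls (from \cite{durhamzalloum:geometry}) bounds the length of any such crossing $\D$--chain.

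For the direction you do treat, your route through a ``wall--domain dictionary'' and induction on complexity is unnecessary and, as you yourself flag, unjustified: the BBF quasitrees are not indexed by single subsurfaces, so the claim that every separating wall ``comes from a domain nested in $\Sigma\ssm\alpha$'' has no clear meaning. The paper's argument is much simpler and avoids this entirely: any two points of $\mathbf P_\alpha$ sit at opposite corners of a coarse square in the product, and median-quasiconvexity of halfspaces forces any long $\D$--chain separating them to form a large grid with a chain along the orthogonal factor, contradicting the $K$--chain condition directly. No assignment of walls to subsurfaces is needed.
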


\begin{proof}
In view of the above remarks, we shall abuse notation and view $\E^K$ as a dualisable system on $\MCG\Sigma$, where the walls in $W$ all have median-quasiconvex halfspaces. Recall that $\MCG\Sigma$ has an equivariant, coarsely Lipschitz, quasimedian projection map $\pi:\MCG\Sigma\to\C\Sigma$. We shall prove that $\pi:(\MCG\Sigma,\dist_{\E^K})\to(\C\Sigma,\dist_\Sigma)$ is a quasiisometry for large $K$. Let $E$ be a constant as in Remark~\ref{rem:E}, and let $R$ be sufficiently large compared to $E$. 

Let $x,y\in\MCG\Sigma$. First we show that $\dist_\Sigma(\pi(x),\pi(y))$ is a coarse lower-bound for $\dist_{\E^K}(x,y)$. Let $\gamma$ be a geodesic in $\C\Sigma $ from $\pi(x)$ to $\pi(y)$. Let $\gamma_1,\dots,\gamma_n\subset\gamma$ be a maximal sequence of unit subintervals of $\gamma$ with $\dist_\Sigma(\gamma_i,\gamma_{i+1})>R$ for all $i$. Let $\pi_\gamma:\C\Sigma\to\gamma$ be the closest-point projection map, and for each $i$ set $J_i=\pi_\gamma^{-1}\gamma_i$.

Now, for each $i$ let $H_i=\hull(\pi^{-1}J_i)\subset\MCG\Sigma$. Since hulls are obtained by taking medians at most $\xi$ times, where $\xi$ is the complexity of $\Sigma$, and the map $\pi$ is quasimedian, we have that $\pi(H_i)$ lies in a uniform neighbourhood of $\gamma_i$, with constant depending only on $E$ and $\xi$. In particular, the sets $\pi(H_i)$ are pairwise at distance almost $R$. Since $\pi$ is $E$--coarsely Lipschitz with respect to the usual metric on $\MCG\Sigma$, it follows from Lemma~\ref{lem:d1_and_d} that $\dist_\D(H_i,H_{i+1})$ is bounded below by a uniformly linear function of $R$. Thus, because halfspaces are median-quasiconvex, if $R$ is sufficiently large then there exists $c_i=(h_{i,-m},h_{i,1-m},\dots,h_{i,0},\dots,h_{i,m})\in\D$ separating $H_i$ from $H_{i+1}$.

As another consequence of the fact that the $\pi(H_i)$ are pairwise distant, \cite[Lem.~7.10]{durhamzalloum:geometry} states that the coarse gate $\p_{H_{i+1}}(H_i)$ has $\MCG\Sigma$--diameter bounded uniformly in terms of $E$, and vice versa. If $p,q\in\p_{H_{i+1}}(H_i)$ and $c\in\D$ separates $p$ from $q$, then median-quasiconvexity of halfspaces implies that almost all elements of $c$ cross $H_i$. In particular, Lemma~\ref{lem:d1_and_d} gives a uniform bound $K$ on the cardinality of any such $c$.

If a wall $h\in W$ crosses an element of $c_i$ and an element of $c_{i+2}$, then it crosses both $H_{i+1}$ and $H_{i+2}$. Thus $h_{i,0}\in c_i$ and $h_{i+2,0}\in c_{i+2}$ are $K$--separated. Since $\D$ is $m$--gluable (Lemma~\ref{lem:C_is_gluable}), the chain $(h_{1,0},h_{3,0},h_{5,0},\dots)$ lies in $\D$, and hence in $\E^K$. This gives the desired lower bound.

For the reverse, we shall use the fact that $\C\Sigma$ can, up to quasiisometry, be obtained from $\MCG\Sigma$ by coning-off all curve stabilisers, each of which is coarsely a product $\mathbf{P}_\alpha$ \cite[Thm~1.3]{masurminsky:geometry:1}. In view of this, it suffices to show that such product regions are bounded in the metric $\dist_{\E^K}$, for that will imply that $\dist_\Sigma$ is a coarse upper-bound for $\dist_{\E^K}$.

Given any $p,q\in\mathbf P_\alpha$, we can find a square in $\mathbf P_\alpha$, two of whose vertices are $p$ and $q$. Let us cyclically label the vertices of the square $p_1,p_2,p_3,p_4$, so that the median $\mu(p_{i-1},p_i,p_{i+1})$ is uniformly close to $p_i$ for each $i$. Since halfspaces are median-quasiconvex, if $c_1\in\D$ separates $p_1$ from $p_4$, then almost all elements of $c_1$ separate $p_2$ from $p_3$. Similarly, if $c_2\in\D$ separates $p_2$ from $p_1$, then almost all elements of $c_2$ separate $p_4$ from $p_3$. By Lemma~\ref{lem:d1_and_d}, this shows that any long element of $\D$ separating $p$ from $q$ forms part of a large grid. This bounds the diameter of $\mathbf P_\alpha$ in $\dist_{\E^K}$, completing the proof.
\end{proof}

We now fix a value of $K$ as in Proposition~\ref{prop:curve_graph}, and let $\E=\E^K$. For each $i\in\{1,\dots,m\}$, let $\E_i$ be the elements of $\E$ induced by $\D_i$. Unlike the situation where $S$ is hyperbolic, $T_i$ will not be quasiisometric to $(T_i,\dist_{\E_i})$. The next two lemmas show that the latter is nevertheless a quasitree.

\begin{lemma} \label{lem:Ei_quasitree}
$\E_i$ is a 1--gluable and 0--separated dualisable system of chains on $(T_i,W_i)$. The dual space $T_{\E_i}$ is a quasitree.
\end{lemma}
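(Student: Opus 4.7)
The plan is to verify that $\E_i$ satisfies the hypotheses of Proposition~\ref{prop:0-sep_quasitree} with $m=1$, whence the quasitree conclusion for $T_{\E_i}$ is immediate. Most of the required properties are inherited from $\D_i$ via Lemma~\ref{lem:disparate_properties}: singletons $\{h\}$ with $h \in W_i$ lie in $\E_i$ since the $K$-chain condition on a one-element set is vacuous; $\E_i$ is closed under subsets because restricting a $K$-chain leaves the $K$-chain condition on fewer pairs; the inequality $\dist_{\E_i} \le \dist_{\D_i}$ gives dualisability of $\E_i$; and $0$-separation passes from $\D_i$ to $\E_i$ since any $\{h_1,h_2\}$ and $c$ in $\E_i$ also lie in $\D_i$.

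The substantive step is $1$-gluability. Given $c_1, c_2 \in \E_i$ with $c_1 \cup c_2$ a chain and $c_1 \subset c_2^-$, write $c_1 = \{h_1,\dots,h_p\}$ and $c_2 = \{h'_1,\dots,h'_q\}$. First I would apply $1$-gluability of $\D_i$: since $c_1$ and $c_2$ are each disparate and $c_1 \cup c_2$ is a chain, any violation of disparateness must occur at the consecutive cross-pair $(h_p, h'_1)$, so removing $h_p$ yields $c' = (c_1 \cup c_2) \ssm \{h_p\} \in \D_i \subset \D$. Then I would check that $c'$ is a $K$-chain. For any pair $h', k' \in c'$ and any $d' \in \D$ with every element crossing both $h'$ and $k'$: if the pair lies entirely in $c_1 \ssm \{h_p\}$ or entirely in $c_2$, the $K$-chain property of $c_1$ or $c_2$ gives $|d'| \le K$ directly. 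In the straddling case with $h' \in c_1 \ssm \{h_p\}$ and $k' \in c_2$, the element $h_p$ lies strictly between $h'$ and $k'$ in the chain $c_1 \cup c_2$, and the basic propagation principle---a wall crossing two walls of a chain must cross every wall between them, as follows by tracking the nested halfspace structure along the chain---forces every element of $d'$ to cross $h_p$. Since $h'$ and $h_p$ both lie in $c_1$, the $K$-chain property of $c_1$ then yields $|d'| \le K$. Hence $c' \in \E_i$.

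With $\E_i$ shown to be a $0$-separated, $1$-gluable dualisable system of chains on $(T_i,W_i)$, Proposition~\ref{prop:0-sep_quasitree} immediately gives that $T_{\E_i}$ is a quasitree. The main obstacle will be the straddling case in the $K$-chain verification: success hinges on removing an endpoint element of one of the two chains, so that every surviving cross-pair $(h', k')$ admits a witness lying in the same original $c_i$, allowing the ambient $K$-chain property to be applied. The other potential snag---that $\D_i$'s $1$-gluability only asserts \emph{existence} of some removable element rather than allowing a choice---dissolves because the disparate chain structure forces the only failure of disparateness to occur at the junction $(h_p, h'_1)$, so $h_p$ is indeed a valid removal.
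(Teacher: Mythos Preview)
Your proposal is correct and follows essentially the same approach as the paper's proof: remove the last element $h_p$ of $c_1$, observe that the resulting chain lies in $\D_i$, and verify the $K$--chain condition by using the intermediate wall $h_p$ to reduce every straddling pair to a pair inside $c_1$. Your write-up is in fact somewhat more careful than the paper's, which simply asserts ``since $\D_i$ is 1--gluable, $c\in\D_i$'' without explaining why the \emph{specific} removal of $h_p$ works, and which compresses your case analysis into the single sentence ``any element of $\D$ that crosses two elements of $c$ must cross two elements of one of the $c_i$''.
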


\begin{proof}
Since $\E_i\subset\D_i$, it is 0--separated by Lemma~\ref{lem:disparate_properties}. If $c_1,c_2\in\E_i$ are such that $c_1\cup c_2$ is a chain, then let $c\subset c_1\cup c_2$ be obtained by removing the last element of $c_1$. Since $\D_i$ is 1--gluable, $c\in\D_i$. Moreover, any element of $\D$ that crosses two elements of $c$ must cross two elements of one of the $c_i$, and so has length at most $K$. Hence $c\in\E_i$. The final statement is given by Proposition~\ref{prop:0-sep_quasitree}.
\end{proof}

\begin{lemma} \label{lem:Ei_dense}
Geodesics in $T_i$ are unparametrised rough geodesics in $T_{\E_i}$. Hence $(T_i,\dist_{\E_i})$ is coarsely dense in the quasitree $T_{\E_i}$, and the natural map $(T_i,\dist_{\E_i})\to T_{\E_i}$ is quasimedian.
\end{lemma}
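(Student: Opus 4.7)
The plan is to establish the three assertions in order: first, every $T_i$-geodesic gives an unparametrised rough geodesic in $(T_i,\dist_{\E_i})$; second, this forces coarse density via Proposition~\ref{prop:density}; third, the quasimedian property follows from Lemma~\ref{lem:hyp_qi_is_qm}.

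For the first assertion, fix a geodesic $\gamma$ between $x,y \in T_i$. The upper bound $\dist_{\E_i}(x,y) \le \dist_{\D_i}(x,y) \lesssim \dist_{T_i}(x,y)$ is immediate from $\E_i \subset \D_i$ and Lemma~\ref{lem:disparate_properties}. For the lower bound I would take the chain $c = (h_1,\dots,h_n) \in \D_i$ arising from disparate $K$-balls centred on $\gamma$ at spacing $20K$, and extract the subchain $c' = (h_1, h_{L+1}, h_{2L+1}, \dots)$, of length $\gtrsim \dist_{T_i}(x,y)/(KL)$. I assume throughout that $K$, which is chosen sufficiently large in Proposition~\ref{prop:curve_graph}, satisfies $K \ge L$.

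The crux is to verify that $c' \in \E_i$, i.e., that $c'$ is a $K$-chain. Since $c$ is a chain in $W_i$, its members can be oriented so that $h_1^+ \supset h_2^+ \supset \cdots \supset h_n^+$ as halfspaces of $S$. Then if $w \in W$ is any wall crossing both $h_a$ and $h_b$ with $a < b$, for every intermediate $\ell$ the containments $S \cap w^\pm \cap h_\ell^+ \supset S \cap w^\pm \cap h_b^+$ and $S \cap w^\pm \cap h_\ell^- \supset S \cap w^\pm \cap h_a^-$ populate all four quarterspaces of $(w, h_\ell)$, so $w$ also crosses $h_\ell$. Consequently, if $d' \in \D$ has every member crossing two walls of $c'$, then $d'$ crosses every one of the at least $L+1$ walls of $c$ between them. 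That subchain and $d'$ form a grid in $W$, so Lemma~\ref{lem:no_grids} forces $|d'| \le L \le K$. Thus $c'$ is a $K$-chain, hence an element of $\E_i$, and as $c'$ separates $x$ from $y$ this yields $\dist_{\E_i}(x,y) \gtrsim \dist_{T_i}(x,y)$. An affine reparametrisation of $\gamma$ then promotes the resulting unparametrised quasigeodesic into an unparametrised rough geodesic in the hyperbolic space $T_{\E_i}$.

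From this, $(T_i, \dist_{\E_i})$ is weakly roughly geodesic, and combined with the $0$-separation and $1$-gluability of $\E_i$ established in Lemma~\ref{lem:Ei_quasitree}, Proposition~\ref{prop:density} delivers the coarse density claim. The natural inclusion $(T_i, \dist_{\E_i}) \hookrightarrow T_{\E_i}$ is then an isometric embedding with coarsely dense image between hyperbolic spaces (the target being a quasitree by Lemma~\ref{lem:Ei_quasitree}), so it is a quasiisometry, and Lemma~\ref{lem:hyp_qi_is_qm} supplies the quasimedian constant. The principal obstacle is the middle step, in particular the combinatorial identity showing that a wall $w \in W$ crossing two walls of a nested chain in $W_i$ must cross every intermediate wall; once this is in hand, the rest is a direct application of Lemma~\ref{lem:no_grids}.
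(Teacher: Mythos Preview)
Your argument has a genuine gap: the lower bound $\dist_{\E_i}(x,y)\gtrsim\dist_{T_i}(x,y)$ is false. Lemma~\ref{lem:no_grids} is proved under the standing assumption of Section~\ref{sec:dualising_subspaces} that $X$ (and hence $S$) is hyperbolic; in Section~\ref{sec:curve_graph} the set $S$ is the $\MCG\Sigma$--orbit, which is quasiisometric to $\MCG\Sigma$ and is not hyperbolic. Arbitrarily large grids genuinely occur here---the product regions $\mathbf P_\alpha$ supply them, exactly as in the second half of the proof of Proposition~\ref{prop:curve_graph}---so there is no constant $L$ available, and the subchain $c'$ you build need not be a $K$--chain for any $K$. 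The paper flags this explicitly just before Lemma~\ref{lem:Ei_quasitree}: ``Unlike the situation where $S$ is hyperbolic, $T_i$ will \emph{not} be quasiisometric to $(T_i,\dist_{\E_i})$.'' Your proof would contradict that sentence.

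What the lemma actually asserts is weaker: $\gamma$ is an \emph{unparametrised} rough geodesic, meaning long stretches of $\gamma$ may collapse in $\dist_{\E_i}$. The paper proves this by an additivity estimate rather than a bilipschitz bound: for $t_1,p,t_2$ on $\gamma$ in order, take $c_1\in\E_i$ realising $\dist_{\E_i}(t_1,p)$ and $c_2\in\E_i$ realising $\dist_{\E_i}(p,t_2)$; disparateness lets you drop one wall so that the union is a chain, and $1$--gluability of $\E_i$ then gives $\dist_{\E_i}(t_1,p)+\dist_{\E_i}(p,t_2)\le\dist_{\E_i}(t_1,t_2)+2$. This uses no grid bound at all. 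Your deductions of coarse density and the quasimedian property from this first assertion are fine.
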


\begin{proof}
Let $s_1,s_2\in T_i$, and let $\gamma$ be a geodesic (for the original metric) connecting them. Let $t_1,p,t_2\in\gamma$, in that order. Let $c_1\in\E_i$ realise $\dist_{\E_i}(t_1,p)$, and let $c_2\in\E_i$ realise $\dist_{\E_i}(p,t_2)$. Since elements of $\D_i$ arise from disparate sets of balls, if $c'_1$ is obtained from $c_1$ by removing the element closest to $p$, then $c'_1\cup c_2$ is a chain. By 1--gluability, $\dist_{\E_i}(t_1,p)+\dist_{\E_i}(p,t_2)\le2+\dist_{\E_i}(t_1,t_2)$. This shows that $\gamma$ in an unparametrised rough geodesic in $T_{\E_i}$. This implies that the map is quasimedian, and the coarse density follows from Proposition~\ref{prop:density}.
\end{proof}

We now consider the product $\prod_{i=1}^mT_{\E_i}$ with the $\ell^1$ metric. Note that if we set
\[
\E' \,=\, \big\{\bigcup_{i=1}^md_i\,:\,d_i\in\E_i\big\},
\]
then the metric $\dist_{\E'}$ is exactly the $\ell^1$ metric on $\prod_{i=1}^mT_{\E_i}$. We are ready to prove the following.

\begin{theorem} \label{thm:curve_graph_rank}
The pair $(\C\Sigma,\MCG\Sigma)$ has strong (QT).
\end{theorem}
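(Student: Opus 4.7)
The plan is to take $\prod_{i=1}^m T_{\E_i}$ as the target. Each factor is a quasitree by Lemma~\ref{lem:Ei_quasitree}, and the $\MCG\Sigma$-action on $\prod T_i$ induces one on $\prod T_{\E_i}$ because walls are preserved by isometries.

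First I would compose the orbit map $f \colon \MCG\Sigma \to \prod T_i$ from Theorem~\ref{thm:mcg_rank} with the inclusions $T_i \hookrightarrow T_{\E_i}$, which are coarsely dense and quasimedian by Lemma~\ref{lem:Ei_dense}. This yields an equivariant quasimedian map $f' \colon \MCG\Sigma \to \prod T_{\E_i}$, whose image is the orbit $S$.

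Next I would compare the product $\ell^1$-metric on $S$, which coincides with $\dist_{\E'}$, against $\dist_\E$. The inclusion $\E \subseteq \E'$ holds because any $d \in \E$ decomposes as $\bigsqcup_i d_i$ with each $d_i$ a subchain, hence still a $K$-chain, so $d_i \in \E_i$; this gives $\dist_\E \leq \dist_{\E'}$. Conversely, if $d = \bigsqcup_i d_i \in \E'$ separates $x, y \in S$, then each $d_i \in \E_i \subseteq \E$ separates $x$ from $y$, so $|d| = \sum_i |d_i| \leq m \cdot \dist_\E(x,y)$. Hence $\dist_{\E'} \asymp \dist_\E$ on $S$, and $f'$ becomes an equivariant quasimedian quasiisometric embedding of $(\MCG\Sigma, \dist_{\E})$ into $(\prod T_{\E_i}, \ell^1)$.

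Finally, by Proposition~\ref{prop:curve_graph} the projection $\pi \colon (\MCG\Sigma, \dist_{\E}) \to (\C\Sigma, \dist_\Sigma)$ is an equivariant quasiisometry, so $f'$ descends coarsely through $\pi$ to yield the desired embedding of $\C\Sigma$ into $\prod T_{\E_i}$. The hard part will be equivariance of this descent, since set-theoretic sections of $\pi$ are typically not equivariant: I would handle this by noting that the fibres of $\pi$ are cosets of curve stabilisers, which are bounded in $(\MCG\Sigma, \dist_{\E})$ (as shown in the proof of Proposition~\ref{prop:curve_graph}) and thus act on $\prod T_{\E_i}$ with bounded orbits, so the induced map on $\C\Sigma$ is well-defined and equivariant up to bounded error, absorbable into the quasi-isometry constants. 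Quasimedianness is preserved throughout because $f'$ is quasimedian and $\pi$ is quasimedian by Remark~\ref{rem:E}, with its coarse inverse also quasimedian by Lemma~\ref{lem:hyp_qi_is_qm}.
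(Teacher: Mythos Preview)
Your proposal is correct and follows essentially the same route as the paper: target $\prod_i T_{\E_i}$, show $\dist_\E \asymp \dist_{\E'}$ on $S$ via the inclusion $\E \subset \E'$ and the pigeonhole bound $|d_i| \ge |d|/m$, and invoke Proposition~\ref{prop:curve_graph} to identify $(S,\dist_\E)$ with $\C\Sigma$.

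You actually go further than the paper in one respect: you flag the equivariance-of-descent issue, which the paper's proof does not mention. Your resolution via bounded fibres gives a map that is equivariant up to bounded error, not literally equivariant as Definition~\ref{def:sqtr} demands; so strictly speaking neither your argument nor the paper's verifies strong (QT) on the nose. This is harmless for the intended application (Theorem~\ref{thm:cylinders} via Proposition~\ref{prop:cylinders_qi_invariant}), since one can run Sections~\ref{sec:dualising_subspaces}--\ref{sec:stable} directly with $S$ the $\MCG\Sigma$-orbit in $\prod T_{\E_i}$ and then transfer cylinders along the genuinely equivariant map $\pi$ in the direction $S_\C \to \C\Sigma$, but it is worth being aware that the literal statement of the theorem involves a small abuse.
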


\begin{proof}
Proposition~\ref{prop:curve_graph} tells us that $\C\Sigma$ is quasiisometric to $(S,\dist_\E)$. The maps $\pi$ and $f$ are both quasimedian, and Lemma~\ref{lem:Ei_dense} implies that the natural map $\prod_{i=1}^mT_i\to\prod_{i=1}^mT_{\E_i}$ is quasimedian as well. It therefore suffices to show that the natural map $(S,\dist_\E)\to(S,\dist_{\E'})\subset \prod_{i=1}^mT_{\E_i}$ is a quasiisometry.

Since $\E\subset\E'$, we have that $\dist_\E\le\dist_{\E'}$. For the reverse, given any $c=(d_i)_{i=1}^m\in\E'$, there is some $i$ such that $|d_i|\ge\frac{|c|}m$. As $d_i\in\E$, this means that $\dist_\E\ge\frac1m\dist_{\E'}$.
\end{proof}

By the results of Section~\ref{sec:stable}, we obtain the following.

\begin{corollary} \label{cor:curve_graph_cylinders}
The pair $(\C\Sigma, \MCG\Sigma)$ admits globally stable cylinders.
\end{corollary}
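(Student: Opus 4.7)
The plan is to deduce this corollary directly by combining the two major results already established in the paper. The curve graph $\C\Sigma$ is a (roughly) geodesic $\delta$-hyperbolic space by the fundamental theorem of Masur--Minsky, and it carries a natural isometric action of $\MCG\Sigma$, so the hypotheses needed to invoke Theorem~\ref{thm:cylinders} reduce to verifying that $(\C\Sigma,\MCG\Sigma)$ has finite strong quasitree rank.

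First I would record that the pair $(\C\Sigma,\MCG\Sigma)$ has strong (QT), which is precisely the content of Theorem~\ref{thm:curve_graph_rank}. This delivers an $\MCG\Sigma$-equivariant, quasimedian, quasiisometric embedding of $\C\Sigma$ into a finite product of quasitrees, so the strong quasitree rank is some finite integer $m$.

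Then I would feed this into Theorem~\ref{thm:cylinders}, taking $X = \C\Sigma$ and $G = \MCG\Sigma$. The theorem asserts the existence of a constant $R$ such that $(G,X)$ admits globally $(2m+1,R)$-stable cylinders, which by Definition~\ref{def:gsc} is exactly what it means for $(\C\Sigma,\MCG\Sigma)$ to admit globally stable cylinders.

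There is no real obstacle at this stage: all the hard work has been absorbed into Theorem~\ref{thm:cylinders} (which in turn rests on the cylinder construction via the dual $S_{\C}$ in Section~\ref{sec:stable}) and Theorem~\ref{thm:curve_graph_rank} (which handles the delicate refinement of the Bestvina--Bromberg--Fujiwara quasitrees so that orbits recover $\C\Sigma$ up to quasiisometry rather than just the thick part). The corollary is a one-line consequence of chaining these two results.
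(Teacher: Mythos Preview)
Your proposal is correct and matches the paper's own proof essentially verbatim: invoke Theorem~\ref{thm:curve_graph_rank} to get strong (QT) for $(\C\Sigma,\MCG\Sigma)$, then apply Theorem~\ref{thm:cylinders}. The paper's proof is the same two-line chain, so there is nothing to add.
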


\begin{proof}
Theorem~\ref{thm:curve_graph_rank} states that $(\C\Sigma,\MCG\Sigma)$ has strong (QT), so the result is given by Theorem~\ref{thm:cylinders}.
\end{proof}

\begin{remark} \label{rem:generality}
Although we have worked with mapping class groups and curve graphs in this section, the arguments actually apply for any hierarchically hyperbolic group with strong (QT), together with its \emph{largest} hyperbolic space, as constructed in \cite{abbottbehrstockdurham:largest}. Indeed, the only point that needs commenting on is the part of the proof of Proposition~\ref{prop:curve_graph} where we applied \cite[Thm~1.3]{masurminsky:geometry:1}: the analogous statement holds for all HHGs by \cite[Cor.~H]{behrstockhagensisto:asymptotic}.
\end{remark}

\begin{theorem} \label{thm:hhg}
Let $G$ be a hierarchically hyperbolic group, with largest hyperbolic space $X$. If $G$ has strong (QT) then $X$ has strong (QT), and $(G,X)$ admits globally stable cylinders.
\end{theorem}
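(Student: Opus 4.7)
The plan is to follow the template of Section~\ref{sec:curve_graph} essentially verbatim, substituting the hierarchically hyperbolic group $G$ for $\MCG\Sigma$ and its largest hyperbolic space $X$ for $\C\Sigma$, as indicated in Remark~\ref{rem:generality}. Since $G$ has strong (QT), I have a $G$--equivariant, quasimedian, quasiisometric embedding $f\colon G\to\prod_{i=1}^m T_i$ into a finite product of quasitrees. Setting $S=f(G)$ and letting $W_i,\D_i,W,\D^1$, and $\D$ be the wall systems and dualisable systems produced exactly as in Sections~\ref{sec:QT} and~\ref{sec:dualising_subspaces}, Lemma~\ref{lem:d1_and_d} again shows that the identity $(S,\dist_{\D^1})\to(S,\dist_\D)$ is a quasiisometry. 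All of these preliminary constructions depend only on the product of quasitrees and on strong (QT), not on any specific feature of $\MCG\Sigma$, so they transfer without change.

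The central task is establishing the HHG analog of Proposition~\ref{prop:curve_graph}: for sufficiently large $K$, the space $(S,\dist_{\E^K})$ is $G$--equivariantly quasiisometric to $X$. To adapt that argument I would first identify exactly which pieces are HHG--generic and which are $\MCG\Sigma$--specific. The coarsely Lipschitz quasimedian projection $\pi\colon G\to X$ is part of the HHG structure from \cite{abbottbehrstockdurham:largest}; the coarse median, the hulls $\hull(\cdot)$, and median-quasiconvexity of the halfspaces of walls $\overline W$ all come from Bowditch's coarse median framework together with the quasimedianness of $f$, so they persist for arbitrary HHGs. The bounded-coarse-gate input \cite[Lem.~7.10]{durhamzalloum:geometry}, which controls $\p_{H_{i+1}}(H_i)$, is stated in the HHG generality I need. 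The single $\MCG\Sigma$--specific ingredient is the appeal to Masur--Minsky \cite[Thm~1.3]{masurminsky:geometry:1} to identify $\C\Sigma$, up to quasiisometry, with $\MCG\Sigma$ coned off along curve stabilisers; this is replaced for a general HHG by \cite[Cor.~H]{behrstockhagensisto:asymptotic}, which realises $X$ as $G$ coned off along its standard product regions. With these substitutions the lower-bound argument (building a chain in $\E^K$ by placing walls transversely to an $X$--geodesic using the hulls $H_i$) and the upper-bound argument (bounding the $\dist_{\E^K}$--diameter of each standard product region via the grid-from-square trick and the no-large-grids Lemma~\ref{lem:no_grids}) both go through.

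With this analog of Proposition~\ref{prop:curve_graph} in hand, the rest of Section~\ref{sec:curve_graph} carries over mechanically: Lemmas~\ref{lem:Ei_quasitree} and~\ref{lem:Ei_dense} use only the structure of $0$--separated, $1$--gluable dualisable systems of chains on a single quasitree, and the assembly in the proof of Theorem~\ref{thm:curve_graph_rank} uses only the product metric $\dist_{\E'}$ and quasimedianness of the three maps involved. Stitching these together provides a $G$--equivariant, quasimedian, quasiisometric embedding of $X$ into the finite product of quasitrees $\prod_{i=1}^m T_{\E_i}$, which is precisely the statement that $X$ has strong (QT). The second conclusion then follows immediately by applying Theorem~\ref{thm:cylinders} to the roughly geodesic hyperbolic space $X$ with its $G$--action: strong (QT) for $(X,G)$ yields globally $(2m+1,R)$--stable $G$--equivariant cylinders for some $R$.

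The main obstacle is the HHG analog of Proposition~\ref{prop:curve_graph}: confirming that the gate-control lemma of \cite{durhamzalloum:geometry} and the coning-off description of $X$ from \cite{behrstockhagensisto:asymptotic} are in the right generality, and that the standard product regions of the HHG structure interact with the walls $\overline W$ through median-quasiconvexity in a manner matching the $\MCG\Sigma$ argument. Once those two references are verified to apply, everything else is bookkeeping within the framework already developed in this paper.
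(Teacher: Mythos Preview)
Your proposal is correct and matches the paper's approach exactly: the paper's proof of Theorem~\ref{thm:hhg} is precisely Remark~\ref{rem:generality}, which asserts that the arguments of Section~\ref{sec:curve_graph} apply verbatim to any HHG with strong (QT), with the sole substitution of \cite[Cor.~H]{behrstockhagensisto:asymptotic} for \cite[Thm~1.3]{masurminsky:geometry:1} in the proof of Proposition~\ref{prop:curve_graph}. You have correctly identified this single non-generic ingredient and the appropriate replacement, and your more detailed accounting of why each remaining step transfers is accurate.
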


The class of hierarchically hyperbolic groups with strong (QT) is much larger than just mapping class groups and residually finite hyperbolic groups. The most general result of this form is given by recent work of Tao \cite{tao:property}, generalising results from \cite{bestvinabrombergfujiwara:proper,hagenpetyt:projection,hannguyenyang:property}. One of the many classes to which his work applies is that of Artin groups of \emph{large} and \emph{hyperbolic} type \cite[Thm~1.4]{tao:property}. For those groups, it was noted in \cite[Cor.~B, Lem.~6.18]{hagenmartinsisto:extra} that the largest hyperbolic space is the \emph{coned-off Deligne complex} $\hat D$ defined in \cite{martinprzytycki:acylindrical}. The above arguments therefore prove the following.

\begin{corollary} \label{cor:artin}
Let $A$ be a residually finite Artin group of large and hyperbolic type. The coned-off Deligne complex $\hat D$ has strong (QT), and $(A,\hat D)$ admits globally stable cylinders.
\end{corollary}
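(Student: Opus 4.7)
The proof will be a direct assembly of Theorem~\ref{thm:hhg} with three external inputs, so the plan is really to verify that the hypotheses of Theorem~\ref{thm:hhg} are met for $(A,\hat D)$. First I would recall that by \cite{hagenmartinsisto:extra}, a residually finite Artin group $A$ of large and hyperbolic type is a hierarchically hyperbolic group, and moreover the largest hyperbolic space of this HHG structure, in the sense of \cite{abbottbehrstockdurham:largest}, is the coned-off Deligne complex $\hat D$ of \cite{martinprzytycki:acylindrical}; this is precisely the content cited as \cite[Cor.~B, Lem.~6.18]{hagenmartinsisto:extra}. So the setup of Theorem~\ref{thm:hhg} applies with $G=A$ and $X=\hat D$, provided we know $A$ itself has strong (QT).

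Next, I would invoke Tao's theorem \cite[Thm~1.4]{tao:property}, which asserts precisely that residually finite Artin groups of large and hyperbolic type admit equivariant quasiisometric embeddings into finite products of quasitrees. As noted after Theorem~\ref{thm:hhg}, this conclusion is a priori only the ``plain'' (QT) property of Bestvina--Bromberg--Fujiwara; to upgrade to strong (QT) one needs the orbit maps to be quasimedian. The argument is the same outline as Proposition~\ref{prop:hyp_orbit_qm}: pass to a finite-index subgroup preserving the factorisation, decompose the image into sub-paths coming from the projection data of the BBF-type collection used by Tao, and check that the projection information controls backtracking in each quasitree factor. Since Tao's construction fits the Bestvina--Bromberg--Fujiwara framework with median-quasiconvex projection axes, the argument given in the outline of Proposition~\ref{prop:hyp_orbit_qm} transports verbatim, so $A$ has strong (QT) in the sense of Definition~\ref{def:sqtr}.

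With these two ingredients in place, Theorem~\ref{thm:hhg} immediately delivers both conclusions: $\hat D$ has strong (QT), and $(A,\hat D)$ admits globally stable cylinders, which is precisely the statement to be proved.

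The main obstacle in this plan is the quasimedian upgrade of Tao's theorem. All the other ingredients are black boxes cited verbatim from the literature, but the strong (QT) strengthening is not explicitly recorded in \cite{tao:property}; one must revisit his BBF-style construction and verify that the same argument sketched in Proposition~\ref{prop:hyp_orbit_qm} (itself modelled on \cite{hagenpetyt:projection,petyt:mapping}) goes through in his setting. Once this verification is done, the rest of the corollary is a one-line application of Theorem~\ref{thm:hhg}.
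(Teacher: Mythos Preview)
Your proposal is correct and follows essentially the same route as the paper: assemble the HHG structure and identification of $\hat D$ as the largest hyperbolic space from \cite{hagenmartinsisto:extra}, invoke Tao's result for strong (QT), and apply Theorem~\ref{thm:hhg}. The only difference is one of emphasis: you treat the quasimedian upgrade of Tao's (QT) as a separate verification step, whereas the paper reads Tao's theorem as already delivering strong (QT), since it is presented as generalising \cite{hagenpetyt:projection}, which is precisely about quasimedian embeddings.
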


The question of which Artin groups are residually finite remains an active topic of research, but it is known for many large-type Artin groups thanks to work of Jankiewicz \cite{jankiewicz:residual}.

\bibliographystyle{alpha}
\bibliography{bibtex}
\end{document}